\def\dM{{\mathcal{M}}}
\def\dS{{\mathcal{S}}}
\def\dT{\mathcal{T}}
\newcommand{\cc}{\mathsf{c}}
\newcommand{\Ww}{\mathsf{W}}
\def\ve{\epsilon} 
\def\lec{\lesssim}
\DeclareMathOperator{\diam}{diam}
\def\Lip{\mathop\mathrm{Lip}} 						
\def\dist{\mathop\mathrm{dist}} 						
\def\Int{\mathop\mathrm{Int}} 						
\newcommand{\ps}[1]{\left( #1 \right)}
\def\XXint#1#2#3{{\setbox0=\hbox{$#1{#2#3}{\int}$ }
\vcenter{\hbox{$#2#3$ }}\kern-.58\wd0}}
  \def\th@citetheorem{
  \thm@headfont{\itshape} 
  \thm@notefont{\normalfont} 
  \itshape
}
  \def\th@maintheorem{
  \thm@headfont{\bfseries} 
  \itshape
}
\theoremstyle{maintheorem}
\newtheorem{maintheorem}{Theorem}
\theoremstyle{theorem}
\newtheorem{theorem}{Theorem}[section]
\newtheorem{lemma}[theorem]{Lemma}
\theoremstyle{citetheorem}
\newtheorem{citetheorem}[theorem]{Theorem}
\theoremstyle{definition}
\newtheorem{definition}[theorem]{Definition}
\theoremstyle{remark}
\newtheorem{remark}[theorem]{Remark}
\numberwithin{equation}{section}
\newcommand{\R}{\mathbb{R}}
\newcommand{\N}{\mathbb{N}}
\newcommand{\Z}{\mathbb{Z}}
\newcommand{\C}{\mathbb{C}}
\newcommand{\lip}{\mathrm{Lip}}
\newcommand{\hd}{\mathcal{H}^d}
\newcommand{\hdc}{\mathcal{H}^d_\infty}
\newcommand{\B}{\mathbb{B}}
\newcommand{\betae}[1]{\beta_E^{d,#1}}
\newcommand{\cubes}{\mathcal{D}}
\newcommand{\chara}{\mathbbm{1}}
\newcommand{\Stop}{\mathop\mathrm{Stop}}
\newcommand{\Next}{\mathop\mathrm{Next}}
\newcommand{\tree}{\mathop\mathrm{Tree}}
\newcommand{\Top}{\mathop\mathrm{Top}}
\newcommand{\wt}{\widetilde}
\newcommand\blfootnote[1]{%
  \begingroup
  \renewcommand\thefootnote{}\footnote{#1}%
  \addtocounter{footnote}{-1}%
  \endgroup
}
\numberwithin{equation}{section}
\theoremstyle{plain}
\newtheorem{corollary}[theorem]{Corollary}
\newtheorem{proposition}[theorem]{Proposition}
\title[Cones and paraboloids]{Cone and paraboloid points of arbitrary subsets of Euclidean space}
\author{Matthew Hyde and Michele Villa}
\newcommand{\Addresses}{{
  \bigskip
  \footnotesize
  
     M. Hyde, \textsc{School of Mathematics,
University of Edinburgh,
JCMB,
Kings Buildings,
Mayfield Road
Edinburgh,
EH9 3JZ,
Scotland}\par\nopagebreak
  \textit{E-mail address}: \texttt{m.hyde@ed.ac.uk}

  M. Villa, \textsc{School of Mathematics and Statistics, University of Jyv\"{a}skyl\"{a}}\par\nopagebreak
  \textit{E-mail address}: \texttt{mvilla@jyu.fi}

}}
\begin{document}
\maketitle

\begin{center}

\begin{minipage}[c][][r]{300pt}
\begin{small}
\textsc{Abstract.} In this paper we characterise cone points of arbitrary subsets of Euclidean space. Given $E \subset \R^n$, $x \in E$ is a cone point of $E$ if and only if
\begin{align*}
    \int_{0}^1 \beta_{E}^{d,2}(B(x,r))^2 \frac{dr}{r} < \infty,
\end{align*}
up to a set of zero $d$-measure. The coefficients $\beta_E^{d,2}$ are a variation of the Jones coefficients. This is a high dimensional counterpart of a theorem of Bishop and Jones from 1994. We also prove similar results for $\alpha$-paraboloid points, which are the $C^{1,\alpha}$ rectifiability counterparts to cone points: $x \in E$ is an $\alpha$-paraboloid point if and only if
\begin{align*}
    \int_0^1 \frac{\overline{\beta}_{E}^{d,2}(B(x,r))^2}{r^{2\alpha}} \, \frac{dr}{r} < \infty
\end{align*}
up to a set of zero $d$-measure. Here, $\overline{\beta}^{d,2}_E$ is another variant of the Jones coefficients, introduced by Azzam and Schul.
\end{small}
\end{minipage}
\end{center}
\blfootnote{\textup{2010} \textit{Mathematics Subject Classification}: \textup{28A75}, \textup{28A12} \textup{28A78}.

\textit{Key words and phrases.} Rectifiability, tangent points, beta numbers, Hausdorff content.

M. H. is supported by The Maxwell Institute Graduate School in Analysis and its
Applications, a Centre for Doctoral Training funded by the UK Engineering and Physical
Sciences Research Council (grant EP/L016508/01), the Scottish Funding Council, Heriot-Watt University and the University of Edinburgh. M. V. is supported by the Academy of Finland via the project Incidences on Fractals, grant No. 321896.
}
\tableofcontents
\section{Introduction}

Let $E \subset \R^n$, and $d<n$ an integer. One says that $E$ is $d$-\textit{rectifiable} if there are $d$-dimensional Lipschitz graphs $\Gamma_i$, $i=1,2,...$, so that
\begin{align*}
    \hd\left(E \setminus \bigcup_{i} \Gamma_i \right) = 0.
\end{align*}
On the other hand, a set is said to be \textit{purely} $d$-\textit{unrectifiable} if $\hd(E \cap F) = 0$ for any $d$-rectifiable set $F$. Rectifiability is a central notion in geometric measure theory, and characterising rectifiable sets is one of its main objectives. 

Let $V\subset \R^n$ be a $d$-dimensional affine plane. For a point $x \in \R^n$, we define the \textit{truncated cone} with radius $r>0$ and aperture $\lambda>0$ by
\begin{align*}
    X(x, V, \lambda, r) := \left\{ y \in \R^n \, | \, |\Pi_{V^\perp}(x-y)| < \lambda |\Pi_V(x-y)| \right\} \cap B(x,r), 
\end{align*}
where $B(x,r)=\{|x-y| < r\}$. Also let $X^c(x, V, \lambda, r) := B(x,r) \setminus X(x, V, \lambda, r)$. One of the best known characterisations of rectifiable sets is the following (see \cite{mattila} Theorem 15.19). 
\begin{citetheorem}\label{t:federer}
Let $E \subset \R^n$ be so that $0< \hd(E)< \infty$. Then $E$ is $d$-rectifiable if and only if there exists a unique $d$-dimensional affine plane $V$ so that 
\begin{align}\label{e:federer-char}
    \lim_{r \to 0} \frac{ \hd(X^c(x, V, \lambda, r)\cap E)}{r^d} = 0 \mbox{ for } \hd-a.e. \, \, x \in \R^n.
\end{align}
\end{citetheorem}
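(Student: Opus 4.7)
The plan is to prove the two implications separately using the standard rectifiability toolkit: the density theorem for Radon measures, Rademacher's theorem for Lipschitz functions, and a countable dense family of $d$-planes in the Grassmannian. For the forward direction, write $E = N \cup \bigcup_i (E \cap \Gamma_i)$ with $\hd(N)=0$ and each $\Gamma_i$ a $d$-dimensional Lipschitz graph. Since $\hd(E)<\infty$, the density theorem yields $\hd((E\setminus\Gamma_i)\cap B(x,r)) = o(r^d)$ for $\hd$-a.e. $x\in E\cap\Gamma_i$, so it suffices to produce the cone decay for $\Gamma_i$ alone. Rademacher's theorem applied to the Lipschitz parametrisation of $\Gamma_i$ produces a true tangent $d$-plane $V_x$ for $\hd$-a.e.\ $x\in\Gamma_i$, which immediately translates into $\hd(\Gamma_i \cap X^c(x,V_x,\lambda,r)) = o(r^d)$ for every fixed $\lambda>0$. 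Uniqueness of $V_x$ follows from the standard $\hd$-a.e.\ lower density bound $\liminf_{r\to 0} \hd(E\cap B(x,r))/r^d \geq 2^{-d}$, which rules out a second plane satisfying the same estimate.

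For the converse, fix a small $\lambda>0$, a countable dense family $\{V_k\}_{k\in\N}$ in the Grassmannian, and parameters $\eta>0$, $r_0>0$. Set
\begin{align*}
E_{k,\eta,r_0} := \ck{x \in E : \hd(X^c(x,V_k,2\lambda,r)\cap E) \leq \eta r^d \text{ for all } 0<r\leq r_0}.
\end{align*}
If $x$ satisfies \eqref{e:federer-char} with plane $V(x)$, then for some $V_k$ within angle $\lesssim\lambda$ of $V(x)$ one has the cone inclusion $X^c(x,V_k,2\lambda,r)\subset X^c(x,V(x),\lambda,r)$, so by an Egoroff-type exhaustion the sets $E_{k,\eta,r_0}$ cover $\hd$-almost all of $E$ as $k\in\N$ and $r_0^{-1}\in\N$ vary. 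It then suffices to show that, for $\eta$ sufficiently small depending only on $\lambda$ and $d$, each $E_{k,\eta,r_0}$ sits inside a $d$-dimensional Lipschitz graph over $V_k$. Given $x,y\in E_{k,\eta,r_0}$ with $\rho:=|x-y|<r_0/4$ and the hypothetical slope estimate $|\Pi_{V_k^\perp}(x-y)|>4\lambda\,|\Pi_{V_k}(x-y)|$, an elementary geometric argument places a ball $B(y,c(\lambda)\rho)$ inside $X^c(x,V_k,2\lambda,2\rho)$. The $\hd$-a.e.\ density lower bound $\hd(E\cap B(y,c\rho))\gec\rho^d$ then contradicts the defining inequality of $E_{k,\eta,r_0}$ applied at scale $2\rho$, provided $\eta$ is small enough in terms of $\lambda$ and $d$. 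Thus any two points of $E_{k,\eta,r_0}$ at distance less than $r_0/4$ obey a $4\lambda$-cone relation.

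The main obstacle is passing from this pairwise cone relation to a genuine global Lipschitz graph. One handles this by further partitioning $E_{k,\eta,r_0}$ into countably many pieces of diameter less than $r_0/8$ via a ball cover; on each piece the cone relation exhibits the set as the graph of a $4\lambda$-Lipschitz function defined on its own $V_k$-projection, which extends to a Lipschitz map on all of $V_k$ by Kirszbraun's theorem. The countable union of these graphs covers $\hd$-almost all of $E$, establishing rectifiability. The remaining work consists of routine verifications: the exceptional sets where density bounds fail, where $V(x)$ cannot be captured by the countable family at the prescribed angle, or where the cone mass fails to be $o(r^d)$ uniformly at some scale $r_0$, are all $\hd$-null by standard density theory and the hypothesis $\hd(E)<\infty$.
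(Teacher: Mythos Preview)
The paper does not prove this statement at all: it is a \emph{cited} theorem (note the \texttt{citetheorem} environment), attributed in the text to Mattila's monograph, Theorem~15.19. There is therefore no ``paper's own proof'' to compare against.

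Your sketch is essentially the classical argument one finds in Mattila, Chapter~15: the forward direction via Rademacher on each Lipschitz graph together with the density theorem to discard the rest of $E$, and the converse via a countable Grassmannian net, a parametrised family $E_{k,\eta,r_0}$, and the pairwise cone relation forced by the lower density bound, yielding Lipschitz graphs on small-diameter pieces. One small point to be careful with: the lower density inequality $\hd(E\cap B(y,c\rho))\gtrsim \rho^d$ is only available at $\hd$-a.e.\ $y$, so you must first intersect $E_{k,\eta,r_0}$ with the full-measure set where this holds before running the two-point argument; you allude to this in the final paragraph, but it should enter earlier, in the definition of the sets being shown to be Lipschitz graphs. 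Otherwise the outline is sound and matches the standard reference.
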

A $d$-plane $V$ satisfying \eqref{e:federer-char} is called an \textit{approximate tangent plane}, while an $x$ for which such a $V$ exists, is an \textit{approximate tangent point}. Other classical examples are characterisations in terms of densities, tangent measures, or orthogonal projections. For proofs of these influential results, see the monograph \cite{mattila}. 
More recently, there has been a need to quantify the notion of rectifiability, due to its connection with singular integrals and analytic capacity. This, starting with the pioneering works of Jones \cite{jones90} and David and Semmes \cite{david-semmes91}, has lead to the study of connections between rectifiability properties of sets and measures and boundedness of geometric square functions (see e.g. \cite{bishop1994harmonic}, \cite{david1998unrectictifiable}, \cite{leger1999menger}, \cite{ntv}, \cite{azzam2015characterization}, \cite{jaye2019proof}).

 Given a set $E$, we say that $x \in E$ is a $d$-\textit{cone point} if there is a $d$-plane $V$ so that for all $\lambda>0$ we can find an $r>0$ such that $E \cap X^c(x, V, \lambda, r)= \emptyset$. It is easy to see that the subset of $E$ of cone points is $d$-rectifiable (\cite{mattila}, Lemma 15.13). 
 
A prototypical and influential result connecting geometric square functions to rectifiability is the following theorem by Bishop and Jones from 1994.
\begin{citetheorem}[{\cite{bishop1994harmonic}, Theorem 2}]\label{t:bishop-jones}
Let $\Gamma \subset \C$ be a Jordan curve. Then, up to set of $\mathcal{H}^1$ measure zero, $x \in \Gamma$ is a cone point if and only if 
\begin{align}\label{e:bishop-jones}
    \int_0^1 \beta_{\Gamma}^{\infty} (x,r)^2 \, \frac{dr}{r} < + \infty.
\end{align}
\end{citetheorem}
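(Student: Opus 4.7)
The theorem is an \emph{iff}, and the two directions require rather different ingredients. The \emph{summability $\Rightarrow$ cone point} direction is essentially a pointwise, deterministic argument showing that the best-approximating lines to $\Gamma$ at $x$ stabilise as the scale shrinks. The \emph{cone point $\Rightarrow$ summability a.e.} direction exploits the fact that the set of cone points is $1$-rectifiable, together with Jones' Traveling Salesman Theorem applied on Lipschitz-graph pieces.

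\textbf{Direction 1 (summability implies cone point).} Fix $x \in \Gamma$ with $\int_0^1 \beta_\Gamma^\infty(x,r)^2 \frac{dr}{r} < \infty$. For each $r$, let $L_{x,r}$ be a line that (up to a constant) attains the infimum defining $\beta_\Gamma^\infty(x,r)$, so $\mathrm{dist}(y, L_{x,r}) \lesssim \beta_\Gamma^\infty(x,r)\, r$ for every $y \in \Gamma \cap B(x,r)$. Since $\Gamma$ is a Jordan curve passing through $x$, connectedness guarantees that $\Gamma \cap B(x,r)$ has diameter comparable to $r$ for all small enough $r$. This forces two lines approximating $\Gamma \cap B(x, r)$ well to be close in direction, giving an angle bound of the form
\[
  \angle(L_{x,r}, L_{x,r/2}) \lesssim \beta_\Gamma^\infty(x, r).
\]
The summability of the $\beta^2$-integral implies $\beta_\Gamma^\infty(x,r) \to 0$; a Cauchy argument along a suitable (stopping-time selected) subsequence of dyadic scales, together with the angle bound to interpolate between consecutive scales, then shows $L_{x,r}$ converges to a line $L_x$. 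Combined with $\beta_\Gamma^\infty(x,r) \to 0$, this yields $\Gamma \cap B(x,r) \subset X(x, L_x, \lambda, r)$ for any prescribed $\lambda > 0$ once $r$ is small.

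\textbf{Direction 2 (cone point implies summability a.e.).} The set $C$ of cone points of $\Gamma$ is $1$-rectifiable by the argument of \cite{mattila}, Lemma 15.13. Cover $C$, up to an $\mathcal{H}^1$-null set, by a countable family of Lipschitz graphs $\{\Gamma_i\}$. Each $\Gamma_i$ has finite length, and Jones' Traveling Salesman Theorem gives
\[
  \sum_{Q \in \mathcal{D}_i} \beta_{\Gamma_i}^\infty(3Q)^2\, \ell(Q) < \infty,
\]
where $\mathcal{D}_i$ is a dyadic decomposition adapted to $\Gamma_i$. A Fubini/Chebyshev argument converts this discrete Carleson sum into the continuous integrated form, so $\int_0^1 \beta_{\Gamma_i}^\infty(x,r)^2 \frac{dr}{r} < \infty$ for $\mathcal{H}^1$-a.e.\ $x \in \Gamma_i$. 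Finally, one must transfer the bound from $\Gamma_i$ to $\Gamma$ itself: at a cone point $x \in \Gamma_i \cap C$, the cone condition forces $\Gamma \cap B(x,r)$ to lie in a thin cone around the tangent direction, and pieces of $\Gamma$ entering $B(x,r)$ but not on $\Gamma_i$ must still hug the same tangent line. This produces an estimate of the form $\beta_\Gamma^\infty(x, r) \lesssim \beta_{\Gamma_i}^\infty(x, cr) + \varepsilon(r)$ for small $r$, where $\varepsilon(r)$ is a negligible cone-aperture error.

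\textbf{Main obstacles.} The delicate step in Direction 2 is the transfer $\beta_{\Gamma_i}^\infty \rightsquigarrow \beta_\Gamma^\infty$: Jones' theorem only sees the graph $\Gamma_i$, and one must rule out that stray branches of $\Gamma$ near $x$ contribute non-negligibly to $\beta_\Gamma^\infty(x,r)$. The Jordan curve hypothesis plus the cone condition at $x$ is precisely what forces such branches to be aligned with the tangent and hence harmless. A secondary subtlety is in Direction 1: since $\sum \beta^2$ finite does not imply $\sum \beta$ finite, the naive triangle inequality for angle rotation between consecutive scales does not directly close, and one must extract a subsequence along which $\beta$ is sufficiently small to ensure Cauchyness, then interpolate between those scales.
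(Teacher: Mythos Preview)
This theorem is \emph{cited} from \cite{bishop1994harmonic}, not proven in the paper; it serves as motivation for Theorem A, the paper's generalisation to arbitrary $E\subset\mathbb{R}^n$ and any $1\le d<n$. So there is no proof in the paper to compare against directly, but the paper's proof of Theorem A (Sections \ref{s:dini-tangent} and \ref{s:tan-beta}) is the natural analogue, and your sketch diverges from it in both directions.

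For Direction 1, the paper does not attempt a pointwise angle-stabilisation argument. Instead (Section \ref{s:dini-tangent}) it runs a stopping-time over Christ--David cubes, stopping when either the accumulated $\beta$-sum exceeds $\delta^2$ or the cube lacks $(d+1,\kappa)$-separated points, then applies the David--Toro parametrisation (Theorem \ref{l:azzam}) to build a bi-Lipschitz surface $\Sigma_S$ for each stopping-time region. A packing estimate (Lemma \ref{l:sumB}) forces a.e.\ good point to lie on some $\Sigma_S$ at arbitrarily small scales, and tangency for $\Sigma_S$ is then upgraded to tangency for $E$ (Lemma \ref{l:A_0^N}). Your direct route would need convergence of $L_{x,r}$, and as you correctly flag, $\sum\beta^2<\infty$ does not give $\sum\beta<\infty$; the ``stopping-time selected subsequence'' fix is not spelled out and is exactly where such pointwise arguments tend to break without an a.e.\ reduction of the kind the paper makes.

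For Direction 2, the transfer $\beta_{\Gamma_i}^\infty\rightsquigarrow\beta_\Gamma^\infty$ is where the real content lies, and your sketch does not supply it. The cone condition at $x$ yields only $\beta_\Gamma^\infty(x,r)\to 0$, not square-integrability of the discrepancy with $\beta_{\Gamma_i}^\infty$; your ``negligible $\varepsilon(r)$'' is precisely the thing to be proved. The paper handles the analogous step (Section \ref{s:tan-beta}) by building an auxiliary lower-content-regular set $E_0=\bigcap_{x\in K}\overline{X(x)}\supset E$, lifting a Whitney decomposition of $\mathbb{R}^d\setminus\Pi(K)$ to pieces $T_S\subset E_0\setminus K$ with $\mathrm{dist}(T_S,K)\approx\mathrm{diam}(T_S)$ (Lemmas \ref{l:diamTS}--\ref{lemma:distF}), and showing that the error terms in the $\beta$-comparison lemmas (Lemmas \ref{l:error-1}, \ref{lemma:errorBbound}) sum as a convergent geometric series. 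That Whitney machinery, rather than the TST on $\Gamma_i$, is what actually closes the argument.
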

\noindent
Here $\beta_\Gamma^\infty(x,r):= \inf_{L}\sup_{y \in B(x,r)\cap \Gamma} \frac{\dist(y, L)}{r}$, where the infimum is taken over all lines in the plane. These are the so-called Jones $\beta$-numbers; they, and their variants, have been widely employed in geometric measure theory and analysis over the past thirty years. 
Our main result is a full $d$-dimensional analogue of Theorem \ref{t:bishop-jones}. In fact, even with $d=1$ and $n=2$, our theorem is new, since it applies to any set, and not only Jordan curves.
\begin{maintheorem}\label{t:main}
Let $E \subset \R^n$ and $1 \leq d < n$. Then, up to a set of $\hd$ measure zero, $x \in E$ is a $d$-cone point if and only if 
\begin{align}\label{e:main-eq}
    \int_0^1 \beta_E^{d,p}(x,r)^2 \frac{dr}{r} < + \infty.
\end{align}
\noindent
Here, $p \in [1,p(d)],$ where
\begin{align}\label{e:pd}
    p(d) := \begin{cases}
    \infty & \mbox{ if } d=1 \mbox{ or } d=2. \\
    \frac{2d}{d-2} & \mbox{ otherwise.}
    \end{cases}
\end{align}
\end{maintheorem}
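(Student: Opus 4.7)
The plan is to prove the ``iff'' by treating the two directions separately, both up to sets of $\hd$-measure zero. The forward direction (cone point $\Rightarrow$ summable) reduces to rectifiability of the set of cone points together with Dorronsoro-type control for Lipschitz graphs. The backward direction (summable $\Rightarrow$ cone point) is the main technical work: at each ``good'' point $x$ one builds a limit plane $V_x$ from a multi-scale best-fit sequence, then upgrades the integral $L^p$-closeness of $E$ to $V_x$ to a genuine pointwise cone condition.

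\textbf{Forward direction.} By Mattila's Lemma 15.13 the set $C$ of cone points of $E$ is $d$-rectifiable, so $C \subset N \cup \bigcup_i \Gamma_i$ with $\hd(N)=0$ and each $\Gamma_i$ a Lipschitz $d$-graph. For $\hd$-a.e. $x \in \Gamma_i$, the summability $\int_0^1 \beta_{\Gamma_i}^{d,p}(x,r)^2 \frac{dr}{r} < \infty$ is a known Dorronsoro / David--Semmes / Azzam--Tolsa result in the $L^p$ Hausdorff-content setting. To promote this from $\Gamma_i$ to $E$ at a cone point $x$, note that the cone direction $V$ agrees $\hd$-a.e. with the approximate tangent plane to $\Gamma_i$ (by a density argument), and that $E \cap B(x,r)$ is trapped in a cone of aperture $\lambda(r) \to 0$ about $V$. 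A Lebesgue-type differentiation for $\hd|_E$ then makes the ``non-graph'' contribution to $\beta_E^{d,p}(x,r)^p$ a correction that is integrable (in $dr/r$) against the $\Gamma_i$-contribution, yielding summability for $E$.

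\textbf{Backward direction.} Let $A = \{x \in E : \int_0^1 \beta_E^{d,p}(x,r)^2 \frac{dr}{r} < \infty\}$. Standard Borel decompositions and inner regularity let us restrict to a compact $A' \subset A$ on which the square function is bounded by a constant $M$, and on which the lower $d$-density of $\hd|_E$ is bounded below. For $x \in A'$, at each dyadic scale $r_k = 2^{-k}$ choose a plane $L_{x,k}$ nearly minimising $\beta_E^{d,p}(x, r_k)$. A content-based tilting lemma (in the spirit of Azzam--Schul, Martikainen--Orponen, Edelen--Naber--Valtorta) gives, under the density lower bound, an estimate of the form
\begin{align*}
    \angle(L_{x,k}, L_{x,k+1}) \lesssim \beta_E^{d,p}(x, 2 r_k) + \beta_E^{d,p}(x, r_k).
\end{align*}
The square-function bound, combined with a subsequence / stopping-time argument (so one needs only $\ell^2$, not $\ell^1$, summability of the $\beta$'s), makes $L_{x,k}$ Cauchy with limit $V_x$. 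The final step is to show that $E$ enters arbitrarily thin cones about $V_x$ at $x$: if not, a sequence of outliers $y_n \in E$ with $\dist(y_n, V_x) \geq \lambda_0 |x-y_n|$ at scales $r_n = |x-y_n| \to 0$, combined with the density lower bound, forces $\beta_E^{d,p}(x, \cdot)$ to stay bounded below on a fixed fraction of a geometric range of scales near each $r_n$, contradicting the integrability of its square at $x$.

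\textbf{Main obstacle and the role of $p(d)$.} The hardest step is the final upgrade from ``average'' to ``pointwise'' control. A single outlier contributes only at scales $\sim |x-y|$ and is otherwise invisible to $\hdc$, so one must couple the density lower bound, which supplies the needed mass near $x$, with scale-linking estimates that propagate an outlier's effect. The restriction $p \leq p(d) = \frac{2d}{d-2}$ (with $p(d)=\infty$ for $d = 1, 2$) enters via the Sobolev-type embedding used in the tilting lemma and in the underlying graph approximation, which controls $L^2$-oscillation of a Lipschitz graph by the $L^p$-norm of its gradient only in the subcritical range $p \leq 2d/(d-2)$; this is precisely the range in which the scheme closes up.
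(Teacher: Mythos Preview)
Your proposal has genuine gaps in both directions, and they stem from the same misunderstanding: you are treating $\beta_E^{d,p}$ as if it were a David--Semmes or Azzam--Schul coefficient, for which density hypotheses on $E$ are needed, whereas the whole point of the content $\mathcal{M}_\infty^d$ in Definition~\ref{IntroDef} is that \emph{every} ball centred on $E$ already carries mass $\gtrsim r^d$, with no assumption on $E$ whatsoever.

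\textbf{Backward direction.} You write that you ``restrict to a compact $A'\subset A$ \dots on which the lower $d$-density of $\hd|_E$ is bounded below''. For an arbitrary $E\subset\R^n$ this is not available: $\hd(E\cap B(x,r))$ may be infinite for all $r$, or the lower density may vanish on a set of positive measure. Your tilting lemma and your outlier-contradiction both hinge on this density bound, so the argument does not close. The paper avoids density entirely: the key upgrade from average to pointwise is Lemma~\ref{lemma:betap_betainfty}, namely $\beta_{E,\infty}^d(\tfrac12 B)\lesssim \beta_E^{d,1}(B)^{1/(d+1)}$, which holds precisely because $\mathcal{M}_\infty^d(B(y,\tau))\gtrsim \tau^d$ for any $y\in E$. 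With this in hand, the paper runs a Christ--David stopping-time (on separation of points in $E$, not on density) and applies the David--Toro/Ghinassi parametrisation (Theorem~\ref{l:azzam}) to produce a surface $\Sigma$ through $x$; the final step (Lemma~\ref{l:A_0^N}) again uses Lemma~\ref{lemma:betap_betainfty}, not a density bound.

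\textbf{Forward direction.} Covering the cone points by Lipschitz graphs $\Gamma_i$ is fine, but your ``promotion'' from $\beta_{\Gamma_i}$ to $\beta_E$ by Lebesgue differentiation is not a proof. The cone condition only gives an aperture $\lambda(r)\to 0$ with no rate, so it does not by itself make the non-graph contribution square-integrable in $dr/r$; and since $\mathcal{M}_\infty^d$ sees every point of $E$ (not just those near $\Gamma_i$), you cannot discard the rest of $E$ by a density argument. The paper instead fixes one aperture $\theta$ and radius, builds the lower-content-regular set $E_0=\bigcap_{x\in K}\overline{X_0(x)}\supset E$ (Lemma~\ref{l:Eplus}), and uses it as a bridge: $\beta_E\lesssim \beta_{E_0}+E_1$ by Lemma~\ref{l:error2}, then $\overline\beta_{E_0}\lesssim \overline\beta_\Gamma+E_2$ by Lemma~\ref{lemma:azzamschul}, with the error terms controlled via a Whitney decomposition of the projection (Lemmas~\ref{l:diamTS}--\ref{lemma:errorBbound}).

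\textbf{Role of $p(d)$.} You locate the restriction $p\le p(d)$ in a Sobolev-type embedding for the tilting lemma in the backward direction. In fact the backward direction works for every $1\le p<\infty$ (it reduces to $p=1$ by Lemma~\ref{lemma:betaincreasep}). The constraint $p\le \tfrac{2d}{d-2}$ appears only in the forward direction, in the Whitney error sums (see \eqref{e:summing1}--\eqref{e:summing3}): it is exactly the range in which the exponent $2+\tfrac{2d}{p}-d$ stays positive, so that the sum over Whitney pieces is geometric.
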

\begin{remark}[Finiteness assumption]
 Classical characterisations of rectifiability such as Theorem \ref{t:federer} (or Besicovitch projection theorem, or Preiss' results on tangent measures) usually assume $\hd(E)<+\infty$ (at least locally); this assumption is needed for the limit \eqref{e:federer-char} to even make sense. 
 This is in contrast to Theorems \ref{t:bishop-jones} and \ref{t:main}, which instead quantify how $E$ should behave near a $d$-cone point $x$, regardless of the dimension and measure of $E$. 
\end{remark}

\begin{remark}[$\beta$'s coefficients]\label{r:betas}
The coefficients appearing in \eqref{e:main-eq} are an `$L^p$' variant of the coefficients  $\beta_E^\infty$, introduced in \cite{hyde2020TST}. They are defined as
\begin{align}\label{e:beta-matt}
    \beta_E^{d,p}(x,r) := \inf_{V} \left( \frac{1}{r^d} \int_{B(x,r)\cap E} \left(\frac{\dist(y, V)}{r}\right)^p \, d\dM^d_\infty(y) \right)^{\frac{1}{p}}.
\end{align}
Here the infimum is over all $d$-dimensional affine planes in $\R^n$. The integration is with respect to $\dM_\infty^d$, a modified Hausdorff content (see Section \ref{s:prelim}). 
It is well known that the coefficients $\beta_E^\infty$ are not suitable when $d>2$\footnote{Fang \cite{fang1990} constructed a Lipschitz graph where \eqref{e:bishop-jones} is infinite on a positive measure set.}, thus to prove a theorem like Theorem \ref{t:main}, one is forced to modify them. We chose to use the coefficients firstly introduced by the first named author in \cite{hyde2020TST}. They are better in the current situation than those used in e.g. \cite{david-semmes91, azzam2015characterization, edelen2016quantitative} or in \cite{azzam2018analyst, villa2020tangent} because \textit{they do not need density assumptions on $E$ to be meaningful}. Instead, the modified Hausdorff content $\mathcal{M}_\infty^d$ is defined in such a way so that \textit{every} ball centred on $E$ is given ``large mass" (even, for example, balls centred on singleton points). By integrating with respect to $\mathcal{M}_\infty^d$, one can detect the geometry in something as sparse as a discrete set. See Definition \ref{IntroDef} for the precise definition.  
\end{remark}

\begin{remark}[Comparisons to literature]\label{r:literature}
There are several results that classify rectifiability in terms of geometric square functions. Most of them, however, are closer to Theorem \ref{t:federer} than to Theorem \ref{t:main}, since they assume the existence of a (locally) finite measure.
A particularly similar-looking result which falls in this class is the following theorem. 
\begin{citetheorem}[\cite{azzam2015characterization, tolsa2015characterization, edelen2016quantitative}] \label{t:azzam-tolsa}
Let $\mu$ be a finite Radon measure on $\R^n$ and suppose that $0< \theta^*(\mu, x)$ and $\theta_*(\mu,x)< \infty$ for $\mu$-almost all $x \in\R^n$. Then $\mu$ is $d$-rectifiable if and only if
\begin{align}\label{e:azzam-tolsa}
    \int_0^1 \beta_{\mu, 2}^d(x,r)^2 \, \frac{dr}{r} < \infty \mbox{ for } \mu-a.e. x \in \R^n.
\end{align}
\end{citetheorem}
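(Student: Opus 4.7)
My plan is to prove the two implications separately. The density hypotheses allow me to reduce (by a countable partition) to a set $F$ on which $c^{-1} \le \theta_*(\mu,\cdot) \le \theta^*(\mu,\cdot) \le c$ for some fixed $c$, so that $\mu|_F$ is comparable to $\hd|_F$ in a density sense; throughout I will argue for $\mu|_F$ and let $F$ exhaust all but a $\mu$-null set as $c \to \infty$. This pinching provides the weak analogue of Ahlfors regularity that the later machinery requires.

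For the forward direction, assume $\mu$ is $d$-rectifiable. Then $\supp \mu$ is contained (up to $\mu$-null sets) in a countable union of $d$-dimensional Lipschitz graphs $\Gamma_i = \mathrm{graph}(f_i)$ over $d$-planes $V_i$. Using the density bounds on $F$, I can compare $\beta_{\mu,2}^d(x,r)^2$ to the classical Dorronsoro coefficient of $f_i$:
\begin{align*}
    \beta_{\mu,2}^d(x,r)^2 \lesssim \inf_{A \ \text{affine}} \avint_{B(x,r) \cap V_i} \left(\frac{|f_i(y) - A(y)|}{r}\right)^2 dy.
\end{align*}
Dorronsoro's theorem then gives the integrability of the right-hand side against $dr/r$ for Lebesgue-a.e.\ $y \in V_i$, hence for $\hd$-a.e.\ point of $\Gamma_i$, hence $\mu$-a.e., establishing \eqref{e:azzam-tolsa}.

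The backward implication is the main content and proceeds via a corona decomposition. By Chebyshev I restrict to a set $F' \subset F$ of large $\mu$-measure on which the square function is uniformly bounded by a constant $M$. Using David--Mattila cubes $\mathcal{D}_\mu$ adapted to $\mu|_{F'}$, I construct a stopping-time forest: within each top cube $R$, descend through $Q \subset R$ and stop as soon as one of the \emph{bad events} occurs: (i) the best-approximating $d$-plane $L_Q$ has rotated by more than an angle $\eta$ from $L_R$, (ii) $\beta_{\mu,2}^d(2 B_Q) > \varepsilon$, or (iii) the local density drifts outside a neighbourhood of its value at $R$. The crucial geometric input, as in Tolsa and Azzam--Tolsa, is that a macroscopic rotation between consecutive scales forces $\beta_{\mu,2}^d$ to be comparable to the rotation angle at an intermediate scale, so that the uniform hypothesis $\int_0^1 \beta_{\mu,2}^d(x,r)^2 \,\frac{dr}{r} \le M$, combined with Fubini, yields a Carleson-packing estimate for stopping cubes of each tree. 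Inside a tree, the surviving cubes cluster around a common $d$-plane, and a quantitative Reifenberg parametrisation (Reifenberg's theorem with holes, as developed by Azzam--Tolsa or David--Toro) covers the good portion of $R \cap F'$ by a Lipschitz graph of small constant. Packing these covers shows that $\mu|_{F'}$-a.e.\ point lies on one of countably many Lipschitz graphs, i.e.\ $\mu$ is $d$-rectifiable.

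The main obstacle is manufacturing an honest Lipschitz graph out of cubewise planar approximations in the absence of genuine AD-regularity: pointwise density bounds are strictly weaker than the doubling/Ahlfors-regular hypothesis underpinning David--Semmes' UR theory, so the Reifenberg-with-holes construction must be run delicately, controlling an $L^2$ sum of angular displacements between successive planes along each branch and handling gaps in the support via holes. A secondary subtlety is that $\mu$ is not assumed absolutely continuous with respect to $\hd|_{\supp \mu}$ a priori; this is resolved post hoc, since once $\supp \mu \cap F'$ is shown to be $d$-rectifiable, the positive lower density forces $\mu|_{F'} \ll \hd|_{F'}$ and the density analysis of rectifiable measures (in the spirit of Preiss) upgrades the conclusion to $d$-rectifiability of the measure $\mu$ itself.
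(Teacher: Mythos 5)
A preliminary remark: Theorem \ref{t:azzam-tolsa} appears in the paper only as a \emph{cited} background result (from \cite{azzam2015characterization, tolsa2015characterization, edelen2016quantitative}); the paper contains no proof of it, so there is no internal argument to compare yours against. I will therefore assess your outline against the proofs in that literature.

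Your backward direction is a fair sketch of the Azzam--Tolsa strategy (Chebyshev truncation, David--Mattila cells, a stopping time with angle/$\beta$/density conditions, Carleson packing, Reifenberg with holes). One serious understatement: of your three stopping conditions, the hard one to pack is (iii), the density drift. The angle-stopping cubes are indeed controlled by the square function roughly as you say, but the Carleson estimate for the cubes where the density oscillates is the core of \cite{azzam2015characterization}; it requires constructing auxiliary approximating (uniformly rectifiable) measures and a compactness/variational argument, and does not follow ``by Fubini'' from $\int_0^1 \beta^d_{\mu,2}(x,r)^2\,\frac{dr}{r}\le M$. As written, your proposal gives no mechanism for packing those cubes.

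The forward direction has a concrete gap. The coefficient $\beta^d_{\mu,2}(x,r)$ integrates $\dist(y,V)^2$ against \emph{all} of $\mu|_{B(x,r)}$, not just against the piece of $\mu$ sitting on the single graph $\Gamma_i$ containing $x$; the infimising plane must serve every piece simultaneously. The mass $\mu(B(x,r)\setminus\Gamma_i)$ carried by the other graphs $\Gamma_j$ contributes up to $r^{-d}\mu(B(x,r)\setminus\Gamma_i)$ to $\beta^d_{\mu,2}(x,r)^2$, and at a $\mu$-density point of $\Gamma_i$ (with the pinched densities) this is only $o(1)$ as $r\to 0$ -- not square-summable against $dr/r$. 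So the comparison of $\beta^d_{\mu,2}$ with the Dorronsoro coefficient of $f_i$ is exactly the step that fails, and controlling this off-graph contribution is the main content of \cite{tolsa2015characterization}, which runs its own corona-type decomposition for this implication rather than a direct appeal to Dorronsoro. (The Dorronsoro step itself, for the single graph $\Gamma_i$, is standard and fine.)
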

\noindent
Here 
\begin{align} \label{e:beta-david-semmes}
    \beta_{\mu,p}^d(x,r) :=\inf_{V} \left( \frac{1}{r^d} \int_{B(x,r)} \left( \frac{\dist(y, V)}{r} \right)^p \,d\mu(y) \right)^{\frac{1}{p}},
\end{align}
where $\mu$ is a Radon measure on $\R^n$. This $L^p$ variant of Jones' $\beta^E_\infty$ was introduced by David and Semmes \cite{david-semmes91} in their work on uniform rectifiability and singular integrals. It was first proven in \cite{azzam2015characterization, tolsa2015characterization} with the stronger assumption $0<\theta^*(\mu,x)<\infty$ $\mu$-a.e. $x \in \R^n$, and then improved to its current form in \cite{edelen2016quantitative, tolsa2019rectifiability}.
There are several other pointwise results that classify rectifiability of Radon measures via geometric square functions. For example, see \cite{dkabrowski2020cones} for a nice quantification of Theorem \ref{t:federer} in terms of `conical energy'. A closely related result is that of \cite{badger2020radon}, where the authors study \textit{Federer rectifiability} in terms of conical defect. See also \cite{lerman2003quantifying, badger2015multiscale,  martikainen2018boundedness}.
For a more comprehensive list, we refer to the nice survey \cite{badger2019generalized}.

Perhaps the closest result to date to Theorem \ref{t:main} is a theorem by the second author in \cite{villa2020tangent}, where, however, it was assumed \textit{a priori} that \textit{$E$ is lower content $d$-regular}. The recent proof of Carleson $\ve^2$-conjecture \cite{jaye2019proof} by Jaye, Tolsa and the second named author is an example of the type of characterisation given in the Bishop-Jones theorem. 
\end{remark}

\begin{remark}[Approximate tangents and cone points]
A further difference between theorems \textit{\`{a} la} Federer (e.g. Theorem \ref{t:bishop-jones}, Theorem \ref{e:azzam-tolsa}) and theorems \textit{\`{a} la} Bishop-Jones (e.g. Theorem \ref{t:bishop-jones}, Theorem \ref{t:main}) is that the latter type characterise \textit{cone points}, rather than \textit{approximate tangents}. If we have some kind of lower regularity (be it Ahlfors-David or content), the two notions coincide (whenever the notion of approximate tangent makes sense). However, in the absence of lower regularity, cone point implies approximate tangent, but not vice versa.
\end{remark}

\begin{remark}[Range of $p$]
The $\beta$ numbers \eqref{e:beta-matt} (as \eqref{e:beta-content} below) have been introduced to study quantitative rectifiability beyond the Ahlfors regular setting (see \cite{azzam2018analyst, villa2020higher, azzam2019quantitative, hyde2020TST}). In this sense the range of $p$ is expected. Note however that Theorem \ref{t:azzam-tolsa} holds only for $p=2$. 
\end{remark}

We now introduce our second main result. One says that $E$ is $(d, k, \alpha)$-rectifiable if there are $d$-dimensional $C^{k, \alpha}$ graphs $\Gamma_i$, $i=1,2,...$ so that
\begin{align*}
    \hd\left(E \setminus \bigcup_{i} \Gamma_i\right) = 0. 
\end{align*}
Higher order rectifiability was, to the best of our knowledge, first considered in \cite{anzellotti1994k}.
Let $V \subset \R^n$ be a $d$-dimensional affine plane. For $x \in \R^n$, define the \textit{truncated paraboloid} with smoothness parameter $\alpha>0$, radius $r>0$ and aperture $\lambda>0$ by 
\begin{align*}
    X_\alpha(x,V, \lambda, r) := \{y \in \R^n \, |\, |\Pi_{V^\perp}(x-y)| < \lambda |\Pi_V(x-y)|^{1+\alpha} \} \cap B(x,r).
\end{align*}
Define $X^c_\alpha(x, V, \lambda, r) := B(x,r) \setminus X_\alpha(x, V, \lambda, r)$. For a set $E$, we say that $x \in E$ is a \textit{$(d, \alpha)$-paraboloid point} (or just paraboloid point) if there is a $d$-plane $V$ so that for all $\lambda>0$, we can find an $r>0$ so that $E\cap X^c_\alpha(x,V, \lambda, r) = \emptyset$. Define
\begin{align}\label{e:beta-as}
    \overline{\beta}_E^{d,p}(x,r):= \inf_V \left( \frac{1}{r^d} \int_{B(x,r)\cap E} \left(\frac{\dist(y, V)}{r}\right)^p \, d \hdc(y) \right)^{\frac{1}{p}},
\end{align}
where the infimum is over all affine $d$-planes. 
Our second main result is the following.
\begin{maintheorem}\label{t:main2}
Let $E \subset \R^n$, $1 \leq d < n$, $1 \leq p \leq p(d)$ and $0<\alpha<1$. Then, up to a set of $\hd$ measure zero, 
\begin{enumerate}
    \item if
\begin{align}\label{e:dini-matt-beta}
     \int_0^1 \beta_{E}^{d,p} (x,r)^2 \frac{dr}{r^{1+2\alpha}} < \infty,
\end{align}
then $x$ is an $\alpha$-paraboloid point. 

\item On the other hand, 
\begin{align}\label{e:alpha-second-direction}
     \int_0^1 \overline{\beta}_{E}^{d,p} (x,r)^2 \frac{dr}{r^{1+2\alpha}} < \infty,
\end{align}
whenever $x$ is an $\alpha$-paraboloid point
\end{enumerate}
\end{maintheorem}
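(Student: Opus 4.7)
The plan is to leverage the machinery of Theorem~A, with both directions adapted to the paraboloid/weighted setting. The weight $r^{-2\alpha}$ in \eqref{e:dini-matt-beta}--\eqref{e:alpha-second-direction} enforces or produces the improved $r^\alpha$-type decay that matches the paraboloid (rather than cone) condition.

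\textbf{Part (1).} The argument mirrors the constructive direction of Theorem~A. At a point $x$ satisfying \eqref{e:dini-matt-beta}, one produces for each dyadic scale $r_k = 2^{-k}$ a near-optimal $d$-plane $V_k$ in the definition of $\beta_E^{d,p}(x, r_k)$. Stability (using that $\dM^d_\infty$ assigns each ball centred on $E$ large mass, so that two planes with small $\beta$ cannot spread apart) yields $\angle(V_k, V_{k+1}) \lesssim \beta_E^{d,p}(x, r_k)$. The new ingredient is a weighted Cauchy--Schwarz: after discretizing, \eqref{e:dini-matt-beta} says $\sum_k 2^{2k\alpha}\beta_E^{d,p}(x, 2^{-k})^2 < \infty$, whence
\[
    \sum_{k \geq K} \beta_E^{d,p}(x, 2^{-k}) \leq \biggl( \sum_{k \geq K} 2^{2k\alpha} \beta_E^{d,p}(x, 2^{-k})^2 \biggr)^{1/2} \biggl( \sum_{k \geq K} 2^{-2k\alpha} \biggr)^{1/2} = o(r_K^\alpha),
\]
since the first factor is the tail of a convergent series. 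Thus $V_k \to V$ with $\angle(V_K, V) = o(r_K^\alpha)$, and any $y \in E \cap B(x, r_K)$ lies within $o(r_K^{1+\alpha})$ of the affine plane $x + V$. This is the paraboloid condition.

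\textbf{Part (2).} I would proceed through a $C^{1,\alpha}$ analogue of Dorronsoro's theorem. First, show that, modulo an $\hd$-null set, the set of $(d,\alpha)$-paraboloid points of $E$ is $(d,1,\alpha)$-rectifiable, i.e.\ contained in a countable union of $d$-dimensional $C^{1,\alpha}$-graphs $\Gamma_i$. This can be done by localizing to points where the paraboloid deviation $|\Pi_{V^\perp}(x-y)|/|\Pi_V(x-y)|^{1+\alpha} \leq \varepsilon$ holds uniformly on a fixed scale, then patching a $C^{1,\alpha}$-graph via a Whitney-type extension \`a la David--Toro. Then one cites or proves a $C^{1,\alpha}$ Dorronsoro-type inequality: for any $d$-dimensional $C^{1,\alpha}$-graph $\Gamma$,
\[
    \int_\Gamma \int_0^1 \overline{\beta}_\Gamma^{d,2}(x, r)^2 \, \frac{dr}{r^{1+2\alpha}} \, d\hd(x) < \infty,
\]
which by Fubini gives \eqref{e:alpha-second-direction} for $\hd$-a.e.\ $x \in \Gamma$. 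Since $\overline{\beta}_E^{d,p}(x,r) \leq \overline{\beta}_{\Gamma_i}^{d,p}(x,r)$ locally near a paraboloid point covered by $\Gamma_i$ ($E$ and $\Gamma_i$ nearly coincide at small scales and $\hdc$ is monotone), the conclusion transfers from $\Gamma_i$ to $E$.

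\textbf{Main obstacle.} The most delicate piece is the $C^{1,\alpha}$ Dorronsoro inequality phrased in terms of the Hausdorff content $\hdc$, rather than classical surface measure on a Lipschitz (or $C^{1,\alpha}$) graph. A secondary difficulty is the $C^{1,\alpha}$-rectifiability reduction in the absence of any a priori measure-theoretic regularity on $E$: the Whitney patchwork has to be built directly from the geometric paraboloid data, without access to a doubling or Ahlfors-regular measure, which is the reason the statements must be made modulo $\hd$-null sets.
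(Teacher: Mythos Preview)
Your Part~(1) has a genuine gap. The stability estimate $\angle(V_k,V_{k+1}) \lesssim \beta_E^{d,p}(x,r_k)$ is \emph{not} a consequence of the fact that $\dM^d_\infty$ gives every ball large mass. Take $E=\{0\}\subset\R^2$, $d=1$: the content $\dM^1_\infty$ of $E\cap B(0,r)$ is $\gtrsim r$, yet every line through $0$ has $\beta=0$. More generally, whenever $E\cap B(x,r)$ lies near a $(d-1)$-plane, many $d$-planes achieve small $\beta$ simultaneously and there is no angle control. The paper's Lemma~\ref{l:angpre} makes this explicit: angle control between approximating planes requires $(d+1,\kappa)$-separated points in $E$, which may fail at every scale for a given $x$. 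The heart of the paper's proof of Part~(1) is precisely to show that the set of $x$ where this separation fails infinitely often is $\hd$-null. This is done by organising cubes into stopping-time regions, building $C^{1,\alpha}$ surfaces $\Sigma_S$ via Ghinassi's parameterisation (Theorem~\ref{l:azzam}) on the good regions, and proving a packing estimate (Lemma~\ref{l:sumB}) that forces the bad set $E_0^\infty$ to have measure zero (Lemma~\ref{l:A^infty}). Your pointwise argument bypasses all of this and therefore does not establish the result; the exceptional $\hd$-null set is not an afterthought but the main content.

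Your Part~(2) is in the right spirit and close to the paper's approach, but the transfer step ``$\overline{\beta}_E^{d,p}(x,r)\leq\overline{\beta}_{\Gamma_i}^{d,p}(x,r)$ locally'' is incorrect as written: near a paraboloid point $x$, the set $E\cap B(x,r)$ lies in a paraboloid around $V$, not in $\Gamma_i$, so monotonicity of $\hdc$ does not give this inequality. The paper handles the transfer differently. It builds an auxiliary set $E_\alpha=\bigcap_{x\in K_\alpha}\overline{X_\alpha(x)}$ which contains $E$ and is shown to be $d$-LCR; then $\overline{\beta}_E\leq\overline{\beta}_{E_\alpha}$ by containment, and $\overline{\beta}_{E_\alpha}$ is compared to $\overline{\beta}_\Gamma$ via Lemma~\ref{lemma:azzamschul}, with the error term $\int(\dist(y,\Gamma)/\ell(Q))^p\,d\hdc$ controlled by a Whitney decomposition of $E_\alpha\setminus K_\alpha$ and the key estimate $\dist(y,\Gamma)\lesssim\diam(S)^{1+\alpha}$ for $y\in T_S$. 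The Dorronsoro input is applied to $\Gamma$ (a Lipschitz graph), not in a $C^{1,\alpha}$ form; the $\alpha$-gain comes entirely from the Whitney error term.
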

Theorem \ref{t:main2} and Theorem \ref{t:delnin} (see below), imply immediately the following corollary.
\begin{corollary}
Let $E \subset \R^n$. If \eqref{e:dini-matt-beta} holds for $\hd$-almost all $x \in E$, then $E$ is $(d, 1, \alpha)$-rectifiable.
\end{corollary}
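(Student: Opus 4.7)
The plan is to deduce the corollary as an essentially immediate combination of Theorem \ref{t:main2}(1) with Theorem \ref{t:delnin}. First, I would invoke the forward direction of Theorem \ref{t:main2}: under the hypothesis that \eqref{e:dini-matt-beta} holds for $\hd$-almost every $x \in E$, there is an $\hd$-null exceptional set $N_1 \subset E$ such that every point of $E \setminus N_1$ is an $\alpha$-paraboloid point of $E$. Let $P$ denote the collection of $\alpha$-paraboloid points of $E$; then $\hd(E \setminus P) \leq \hd(N_1) = 0$.

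Next, I would apply Theorem \ref{t:delnin}, which, as the $C^{1,\alpha}$ counterpart of the classical fact that the set of cone points of a set is $d$-rectifiable (cf.\ \cite{mattila}, Lemma~15.13), asserts that $P$ is $(d, 1, \alpha)$-rectifiable. That is, there exist $d$-dimensional $C^{1,\alpha}$ graphs $\Gamma_1, \Gamma_2, \ldots$ and an $\hd$-null set $N_2$ with $P \setminus \bigcup_i \Gamma_i \subset N_2$.

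Combining the two steps, we have
\begin{align*}
E \setminus \bigcup_i \Gamma_i \ \subset \ (E \setminus P) \,\cup\, \left( P \setminus \bigcup_i \Gamma_i \right) \ \subset\ N_1 \cup N_2,
\end{align*}
so $\hd\bigl(E \setminus \bigcup_i \Gamma_i\bigr) = 0$, which is precisely the definition of $(d,1,\alpha)$-rectifiability of $E$. There is no genuine analytic obstacle inside the proof of the corollary itself; the entire content is packaged into the two cited theorems — Theorem \ref{t:main2}(1), which extracts the paraboloid-point geometry from the $\beta$-square-function control, and Theorem \ref{t:delnin}, which upgrades paraboloid points of an arbitrary set into a covering by $C^{1,\alpha}$ graphs. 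The deduction is pure bookkeeping of the two $\hd$-null exceptional sets.
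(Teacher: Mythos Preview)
Your proposal is correct and follows exactly the approach indicated in the paper, which states that the corollary follows immediately from Theorem~\ref{t:main2}(1) and Theorem~\ref{t:delnin}. The only minor point is that Theorem~\ref{t:delnin} as stated carries the hypothesis $\hd(E)<\infty$, but since every $\alpha$-paraboloid point trivially satisfies the approximate-paraboloid condition, one may either reduce to finite-measure pieces or invoke directly the elementary Lemma~\ref{lemma:mattila} (the $C^{1,\alpha}$ analogue of \cite[Lemma~15.13]{mattila}) to see that the set $P$ of paraboloid points is $(d,1,\alpha)$-rectifiable; the paper likewise leaves this routine reduction implicit.
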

There is another corollary of Theorem \ref{t:main2}, essentially due to the fact that $\beta_E^{d,p} \approx \overline{\beta}_E^{d,p}$ if $E$ is $d$-LCR (see Remark \ref{r:beta-alpha}).
\begin{corollary}
Let $E \subset \R^n$ be $d$-LCR, $1 \leq p \leq p(d)$ and $0<\alpha<1$. Then, up to a set of $\hd$ measure zero, $x \in E$ is an $\alpha$-paraboloid point if and only if 
\begin{align*}
    \int_0^1 \overline{\beta}_{E}^{d,p} (x,r)^2 \frac{dr}{r^{1+2\alpha}} < \infty.
\end{align*}

\end{corollary}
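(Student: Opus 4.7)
The plan is to reduce the corollary to Theorem \ref{t:main2} via the comparison between the two $\beta$-coefficients available under the $d$-LCR hypothesis. The ``only if'' direction is essentially immediate: if $x \in E$ is an $\alpha$-paraboloid point, then by Theorem \ref{t:main2}(2) we already have
\begin{align*}
    \int_0^1 \overline{\beta}_E^{d,p}(x,r)^2 \, \frac{dr}{r^{1+2\alpha}} < \infty
\end{align*}
for $\hd$-a.e.\ such $x$, with no appeal to LCR required.

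For the ``if'' direction, assume the integral above is finite for $\hd$-a.e.\ $x \in E$. The key input is the pointwise equivalence
\begin{align*}
    \beta_E^{d,p}(x,r) \approx \overline{\beta}_E^{d,p}(x,r)
\end{align*}
for $x \in E$ and $0 < r \lesssim \diam E$, which is flagged in Remark \ref{r:beta-alpha}. Granting this, the hypothesis transfers to
\begin{align*}
    \int_0^1 \beta_E^{d,p}(x,r)^2 \, \frac{dr}{r^{1+2\alpha}} < \infty
\end{align*}
for $\hd$-a.e.\ $x \in E$, and Theorem \ref{t:main2}(1) then delivers that $\hd$-a.e.\ such $x$ is an $\alpha$-paraboloid point, completing the corollary.

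The only real content is thus the equivalence $\beta_E^{d,p}(x,r) \approx \overline{\beta}_E^{d,p}(x,r)$ under $d$-LCR. Here I expect the estimate $\overline{\beta}_E^{d,p} \lesssim \beta_E^{d,p}$ to come for free from the construction of $\dM^d_\infty$, which is designed to dominate $\hdc$ on balls centred on $E$. The reverse inequality $\beta_E^{d,p} \lesssim \overline{\beta}_E^{d,p}$ is where the lower regularity pays off: LCR gives $\hdc(B(x,r)\cap E) \gtrsim r^d$, which is precisely the amount of mass needed to absorb any ``extra'' mass that $\dM^d_\infty$ would otherwise distribute to sparse subsets of $B(x,r)\cap E$. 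A standard covering/comparison argument at each scale then yields the reverse bound with constants depending only on $d, n, p$ and the LCR constant of $E$. This is the main (but routine) technical step; once it is in place the corollary follows by combining the two parts of Theorem \ref{t:main2}.
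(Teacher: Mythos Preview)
Your proposal is correct and follows exactly the approach the paper intends: the corollary is stated immediately after Theorem \ref{t:main2} as a consequence of the equivalence $\beta_E^{d,p} \approx \overline{\beta}_E^{d,p}$ under $d$-LCR (Remark \ref{r:beta-alpha}), and your two-direction reduction to Theorem \ref{t:main2}(1) and (2) is precisely this. One small point: you do not need to sketch the covering/comparison argument for the equivalence, since it is already recorded in the paper as Lemma \ref{l:comparable_beta} (quoted from \cite{hyde2020TST}), which gives $\overline{\beta}_E^{d,p}(B,L) \leq \beta_E^{d,p}(B,L) \lesssim \overline{\beta}_E^{d,p}(2B,L)$; the doubled ball on the right is harmless after integrating against $dr/r^{1+2\alpha}$.
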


\begin{remark}[$\beta$ coefficients]\label{r:beta-alpha}
Note that the integration in \eqref{e:beta-as} is with respect to Hausdorff content (rather than $\dM_\infty^d$ as in Theorem \ref{t:main}). The numbers $\overline{\beta}_E^{d,p}$ were introduced by Azzam and Schul \cite{azzam2018analyst} to give a first example of the Analyst's Travelling Salesman Theorem in higher dimensions. They are closely related to $\beta_{E}^{d,p}$ (as in \eqref{e:beta-matt}) but they really become geometrically meaningful when $E$ is lower content $d$-regular. Recall that $E$ is said to be \textit{lower content} $(d,c)$-\textit{regular} ($d$-LCR) when 
\begin{align}
    \hdc(B(x,r)\cap E) \geq c r^d \mbox{ for } x \in E \mbox{ and } r>0.
\end{align}
Indeed, if $\hdc(B(x,r)\cap E)$ is very small, $\overline{\beta}_E^{d,p}(x,r)$ will be very small even if the set is not actually concentrated around a plane. 
\end{remark}

\begin{remark}
We could not prove Theorem \ref{t:main2}(2) using $\beta^{d,p}_E$ instead of $\overline{\beta}^{d,p}_E$ because certain good properties of $\overline{\beta}_E^{d,p}$ do not hold for $\beta_E^{d,p}$. We will say more on this in the proof. 
\end{remark}

\begin{remark}[Comparison to literature]
Higher order rectifiability has been studied in, for example, \cite{anzellotti1994k, delladio2004result, delladio2008sufficient, santilli2017rectifiability, ghinassi2017sufficient, del2019geometric}.
In analogy to Theorem \ref{t:federer}, 
Del Nin and Obinna Idu \cite{del2019geometric} proved Theorem \ref{t:delnin} below. In fact, if $c(d) = 0$, then the first half of Theorem \ref{t:delnin} is a special case of \cite[Theorem 3.23]{santilli2017rectifiability}, and the converse is a special case of \cite[Theorem 5.4]{santilli2017rectifiability}.
\begin{citetheorem}[\cite{del2019geometric}, Theorem 1.1 and Prop. 1.2]\label{t:delnin}
Let $d<n$ an integer, $0<\alpha\leq 1$, $E \subset \R^n$ with $\hd(E)<\infty$. If for $\hd$-a.e. $x \in E$, there exists a $d$-plane $V_x$ and $\lambda>0$ so that
\begin{align*}
    \limsup_{r \to 0} \frac{\hd(X_\alpha^c(x, V_x, \lambda, r)\cap E)}{r^d} < c(d),
\end{align*}
then $E$ is $C^{1,\alpha}$ rectifiable. Conversely, if $E$ is $C^{1,\alpha}$ rectifiable, then 
\begin{align*}
    \lim_{r \to 0} \frac{\hd(X_\alpha^c(x, V_x, \lambda, r)\cap E)}{r^d} = 0.
\end{align*}
\end{citetheorem}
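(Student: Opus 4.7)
The statement splits into two independent directions; I would handle each separately.

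For the converse direction, decompose $E = N \cup \bigcup_i \Gamma_i$ with $\hd(N) = 0$ and each $\Gamma_i$ a $C^{1,\alpha}$ graph. At $\hd$-a.e. $x \in \Gamma_i$, standard density results for finite $\hd$-measure rectifiable sets give $\hd((E\setminus\Gamma_i) \cap B(x,r))/r^d \to 0$. Letting $V_x$ be the tangent plane of $\Gamma_i$ at $x$, a Taylor expansion of the defining map of $\Gamma_i$ yields a constant $C_i$ such that $\Gamma_i \cap B(x,r) \subset X_\alpha(x, V_x, C_i, r)$ for small $r$. Combining these two facts gives $\hd(X_\alpha^c(x, V_x, C_i, r) \cap E)/r^d \to 0$.

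For the main direction, the plan is to reduce to a compact set on which the paraboloid condition holds uniformly, and then to produce a $C^{1,\alpha}$ graph via a Whitney-type extension. By countable decomposition and inner regularity, together with an Egorov-style argument applied to the $\limsup$, I pass to a compact $K \subset E$ of positive $\hd$-measure, constants $\lambda_0, \delta, r_0 > 0$, and a continuous Borel selection $x \mapsto V_x$ on $K$ satisfying
\[
\hd\bigl(X_\alpha^c(x, V_x, \lambda_0, r) \cap E\bigr) < (c(d) - \delta)\, r^d \qquad \forall\, x \in K,\ r < r_0.
\]
Restricting to a small ball around an $\hd$-density point $x_0$ of $K$, I further arrange that every $V_x$ is close to $V := V_{x_0}$ and that $\hd(K \cap B(x,r))/r^d$ is close to $\omega_d$ for $x \in K$ near $x_0$.

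The heart of the argument is a bilateral paraboloid bound: for any $x, y \in K$ near $x_0$,
\[
|\Pi_{V_x^\perp}(y-x)| \leq 2\lambda_0\, |\Pi_{V_x}(y-x)|^{1+\alpha}.
\]
I prove this by contradiction. Suppose $y$ lies strictly outside the paraboloid $X_\alpha(x, V_x, \lambda_0, r)$, where $r$ is slightly larger than $|x-y|$. Since $X_\alpha^c$ is open, a ball $B(y,\rho)$ with $\rho$ comparable to $\dist(y, X_\alpha(x, V_x, \lambda_0, r))$ is contained in $X_\alpha^c(x, V_x, \lambda_0, r)$. The density of $K$ at $y$ forces $\hd(E \cap B(y,\rho)) \geq (\omega_d - \varepsilon)\rho^d$, which, for a sharp choice of $c(d)$, violates the uniform bound at $x$. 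Federer's classical cone argument adapts to the present situation once one tracks how paraboloids in place of cones interact with balls.

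Once the bilateral bound is in hand, writing $K$ as a graph of a function $f \colon \Pi_V(K) \to V^\perp$ and identifying each $V_x$ with the graph of a linear map $L_x : V \to V^\perp$, the pair $(f, L)$ satisfies the $C^{1,\alpha}$ Whitney compatibility condition on $\Pi_V(K)$. The Whitney extension theorem in its $C^{1,\alpha}$ form produces $F \in C^{1,\alpha}(V, V^\perp)$ with $F|_{\Pi_V(K)} = f$, so $K \subset \Gamma_F$ lies in a single $C^{1,\alpha}$ graph; reassembling the countable decomposition covers $\hd$-almost all of $E$ by such graphs. The main obstacle is the sharp constant $c(d)$ in the bilateral step: this is precisely the quantitative improvement of Del Nin--Obinna Idu over Santilli's earlier $c(d) = 0$ result, and requires a precise volumetric comparison between the $\hd$-mass of $E$ lying outside the paraboloid at $x$ and the geometric volume carried by the neighborhood of any bad point $y$.
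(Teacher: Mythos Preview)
This statement is a \emph{cited} theorem (from \cite{del2019geometric}, with the $c(d)=0$ case attributed to \cite{santilli2017rectifiability}); the present paper does not supply a proof but only quotes it as background in the ``Comparison to literature'' remark. There is therefore no proof in the paper to compare your proposal against.

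That said, your sketch is a reasonable outline of the strategy one finds in the cited source: the converse direction via density of the complementary pieces plus the $C^{1,\alpha}$ Taylor estimate on each graph is standard, and for the main direction the reduction to a compact set with uniform paraboloid control followed by a Whitney $C^{1,\alpha}$ extension is indeed the architecture of the Del Nin--Idu argument (and of Santilli's earlier one when $c(d)=0$). The one place where your outline is thin is exactly where you flag it: the ``bilateral paraboloid bound'' with the sharp constant $c(d)$. Your contradiction step, as written, only uses that \emph{some} ball around the bad point $y$ sits in $X_\alpha^c(x,V_x,\lambda_0,r)$ and carries mass $\approx \omega_d \rho^d$; getting from this to a contradiction with the hypothesis for a \emph{specific} positive $c(d)$ requires a careful comparison of $\rho$ with $r$ (i.e.\ of the distance from $y$ to the paraboloid boundary versus $|x-y|$), and in the $\alpha>0$ case this comparison is genuinely more delicate than in Federer's cone argument because the paraboloid narrows as $r\to 0$. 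You acknowledge this, but the proposal does not indicate how the volumetric comparison actually closes; that is the substantive content of the cited paper.
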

\noindent
One direction of the Azzam-Tolsa Theorem \ref{t:azzam-tolsa} in this context was shown to hold by Ghinassi \cite{ghinassi2017sufficient}. It was later improved in \cite{del2019geometric} (Corollary 1.6).
\begin{citetheorem}[\cite{ghinassi2017sufficient}, Theorems I, II] \label{t:ghinassi}
Let $E \subset \R^n$, with $0<\theta^*_d(E, x)<\infty$ and $0<\alpha<1$. Put $p=2$ or $p=\infty$. Suppose that for $\hd$-a.e. $x \in E$, 
\begin{align*}
   \int_0^1 \beta_{E, p}^d (x,r)^2 \frac{dr}{r^{1+2\alpha}} < \infty. 
\end{align*}
Then $E$ is $C^{1,\alpha}$ rectifiable. Here $\beta_{E, p}^d$ is as in \eqref{e:beta-david-semmes}.
\end{citetheorem}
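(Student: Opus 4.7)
The plan is to combine a Chebyshev reduction with a multiscale control on the best--approximating planes, and then to invoke a Reifenberg--type $C^{1,\alpha}$ parametrisation in the spirit of David--Toro. First, by partitioning $E$ into countably many Borel pieces according to the value of $M_x := \int_0^1 \beta_{E,p}^d(x,r)^2\, r^{-1-2\alpha}\,dr$ and to the upper density $\theta_d^*(E,x) \in (0,\infty)$, it suffices (up to an $\hd$-null set) to work on a single piece $F \subset E$ on which both quantities are uniformly bounded. On such an $F$, the square--function hypothesis together with Theorem \ref{t:azzam-tolsa} already gives $d$-rectifiability, so the classical density theory yields positive and finite lower $d$-density, and in particular $d$-lower content regularity of $F$ at all small scales with a uniform constant.

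Next, let $V(x,r)$ be a $d$-plane essentially minimising $\beta_{E,p}^d(x,r)$. The lower content regularity of $F$ lets us convert the $L^p$ flatness bound into a pointwise geometric estimate: a standard triangle inequality argument (as in \cite{david-semmes91,azzam2015characterization}) gives
\[
\angle\bigl(V(x,r),\, V(x,2r)\bigr)^2 \;\lesssim\; \beta_{E,p}^d(x,4r)^2.
\]
Telescoping over dyadic scales and using Cauchy--Schwarz against the weight $r^{1+2\alpha}$ yields, for $0 < s \leq r \leq 1$ and $x \in F$,
\[
\angle\bigl(V(x,s),\, V(x,r)\bigr) \;\lesssim\; r^{\alpha}\sqrt{M_x},
\]
so the limit $V(x) := \lim_{r \to 0} V(x,r)$ exists and is reached at an $\alpha$-Hölder rate. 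A second triangle inequality, exploiting that $V(x,r)$ and $V(y,r)$ both well--approximate $E$ in the overlap $B(x,r) \cap B(y,r)$ when $|x-y| \lesssim r$, then gives
\[
\angle\bigl(V(x),\, V(y)\bigr) \;\lesssim\; |x-y|^{\alpha}
\quad \text{for } x,y \in F.
\]
This is the key $\alpha$-Hölder estimate on the field of tangent planes.

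The final step is to parametrise a neighbourhood of a typical point $x_0 \in F$ as a $C^{1,\alpha}$ graph over $V(x_0)$ via a David--Toro / Reifenberg construction. One covers $V(x_0)$ by a Whitney-type multiscale family, picks at scale $2^{-k}$ a centre $\xi_k \in F$ near each Whitney cube, and interpolates the planar pieces $V(\xi_k, 2^{-k})$ by a partition of unity on $V(x_0)$. The preceding Hölder control on $V(\cdot,\cdot)$ translates, after careful bookkeeping, into a $C^{1,\alpha}$ bound on the differential of the parametrising map, while the lower content regularity of $F$ ensures by a projection argument that $F \cap B(x_0, r_0)$ is captured by the graph. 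A countable exhaustion of $F$ then gives $C^{1,\alpha}$ rectifiability.

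The main obstacle is the last step: the usual Reifenberg parametrisation only yields a bi-Hölder graph, and upgrading it to a genuinely $C^{1,\alpha}$ graph requires one to propagate the $\alpha$-Hölder rotation of the $V(x,r)$ through every scale of the interpolation and to maintain injectivity of the parametrisation on a large piece of $V(x_0)$ — this is precisely the technical heart of \cite{ghinassi2017sufficient}. The reduction to uniform $F$ and the telescoping angle estimate are essentially Dini--type computations and are comparatively routine.
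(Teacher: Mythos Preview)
This statement is a \emph{cited} result in the paper (a \texttt{citetheorem} attributed to \cite{ghinassi2017sufficient}, Theorems I, II); the paper does not give its own proof, so there is nothing to compare your proposal against. The paper only \emph{uses} the underlying machinery of Ghinassi --- namely the $C^{1,\alpha}$ Reifenberg parametrisation, recorded here as Theorem~\ref{l:azzam} --- as a black box inside the proof of its own Theorem~\ref{t:main2}.

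As a standalone sketch, your outline is broadly in the right spirit (reduce to a uniform piece, control angles of best planes via a Dini telescoping, feed into a David--Toro/Ghinassi parametrisation), and you correctly flag that the genuine content is the $C^{1,\alpha}$ upgrade of the Reifenberg map. Two points deserve more care, though. First, your angle estimate $\angle(V(x,r),V(x,2r))^2 \lesssim \beta_{E,p}^d(x,4r)^2$ needs separated points of $E$ (not of $F$) in $B(x,r)$; lower content regularity of the subset $F$ does not by itself furnish this, so one has to argue more carefully (e.g.\ via the rectifiability of $E$ and density considerations on $E$ itself). Second, invoking \cite{ghinassi2017sufficient} for the ``technical heart'' of the parametrisation is circular unless you are careful to cite only the parametrisation \emph{lemma} there (the analogue of Theorem~\ref{l:azzam}), not Theorems~I and~II themselves.
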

While Theorem \ref{t:main2} is a characterisation, it does not provide a converse to Theorem \ref{t:ghinassi}. 
 See \cite{chousionis2019boundedness, Idu2021C} for results on $C^{1,\alpha}$ rectifiability in Heisenberg groups.
\end{remark}

\subsection{Sketch of the proof}
Theorem \ref{t:main2}(1) and hence the ``if'' direction of Theorem \ref{t:main} is proved in Section \ref{s:dini-tangent}. We split the Christ-David cubes intersecting the points satisfying \eqref{e:dini-matt-beta} (or \eqref{e:main-eq}) into stopping-time regions for which we have good control on a Bishop-Jones type square function and each cube $Q \in S$ has sufficiently many separated points from $E$. Because we can control angles between best approximating planes when there are sufficiently many separates points (Lemma \ref{l:angpre}), we can apply the parameterisation theorems of Ghinassi (Theorem \ref{l:azzam}) to  produce a $C^{1,\alpha}$-surface $\Sigma_S$ which well approximates $E$ inside $S.$  For $\mathcal{H}^d$-almost every $x$ satisfying \eqref{e:dini-matt-beta} (or \eqref{e:main-eq}), we can find a stopping-time region $S$ so that $x \in \Sigma_S$ and so that $x$ is a paraboloid point for $\Sigma_S.$ The final part of the proof is to show that these are actually paraboloid points for $E$.

That being a cone point (resp. a paraboloid point) implies \eqref{e:main-eq} (resp. \eqref{e:alpha-second-direction}) is proved in Section \ref{s:tan-beta}. We consider subsets of cone/paraboloid points that can be covered by Lipschitz or $C^{1,\alpha}$ graphs. Then, in the case of Theorem \ref{t:main} we construct a $d$-LCR set $E_0$ containing $E$ and which have the same cone points. This ultimately permit us to compute the $\beta$ coefficients. This strategy is somewhat inspired by Bishop and Jones original one, even if the final technique is very different. The methods for Theorem \ref{t:main2} are similar, with some technical differences.

\subsection*{Acknowledgments}
We thank Jonas Azzam for several useful conversations. We also thank Giacomo Del Nin and Mario Santilli for some relevant comments on a first draft of this paper which improved the exposition.

\section{Preliminaries}\label{s:prelim}

\subsection{Notation} \label{sec:notation}
We gather here some notation and some results which will be used later on.
We write $a \lesssim b$ if there exists a constant $C$ such that $a \leq Cb$. By $a \sim b$ we mean $a \lesssim b \lesssim a$.
In general, we will use $n\in \N$ to denote the dimension of the ambient space $\R^n$, while we will use $d \in \N$, with $d\leq n-1$, to denote the dimension of a subset $E \subset \R^n$.

For two subsets $A,B \subset \R^n$, we let
$
    \dist(A,B) := \inf_{a\in A, b \in B} |a-b|.
$
For a point $x \in \R^n$ and a subset $A \subset \R^n$, 
$
    \dist(x, A):= \dist(\{x\}, A)= \inf_{a\in A} |x-a|.
$
We write 
$
    B(x, r) := \{y \in \R^n \, |\,|x-y|<r\},
$
and, for $\lambda >0$,
$
    \lambda B(x,r):= B(x, \lambda r).
$
At times, we may write $\B$ to denote $B(0,1)$. When necessary we write $B_n(x,r)$ to distinguish a ball in $\R^n$ from one in $\R^d$, which we may denote by $B_d(x, r)$.

\subsection{Christ-David and Whitney cubes.}

In this section we briefly recall two `cubes'-tools which will be needed later on. 
\subsubsection{Christ-David cubes} David in \cite{david-wavelets} and then Christ in \cite{christ1990b}, introduced a construction which allows for a partition (up to a set of measure zero) of any space of homogeneous type into open subsets which resemble the behaviour of dyadic cubes in $\R^n$. The formulation below is due to Hyt\"{o}nen and Martikainen \cite{hytonen2012non} and forms an exact partition of any doubling metric space $X.$

\begin{theorem}[Christ-David cubes] \label{theorem:christ}
Let $X$ be a doubling metric space and $X_k$ be a sequence of maximal $\rho^k$-separated nets, where $\rho = 1/1000$ and let $c_0 = 1/500.$ Then, for each $k \in \Z$, there is a collection $\mathscr{D}_k$ of cubes such that the following hold.
\begin{enumerate}
\item For each $k \in \Z, \ X = \bigcup_{Q \in \mathscr{D}_k}Q.$
\item If $Q_1,Q_2 \in \mathscr{D} = \bigcup_{k}\mathscr{D}_k$ and $Q_1 \cap Q_2 \not= \emptyset,$ then $Q_1 \subseteq Q_2$ or $Q_2 \subseteq Q_1.$ 
\item For $Q \in \mathscr{D},$ let $k(Q)$ be the unique integer so that $Q \in \mathscr{D}_k$ and set $\ell(Q) = 5\rho^k.$ Then there is $x_Q \in X_k$ such that
\begin{align*}
B(x_Q,c_0\ell(Q)) \subseteq Q \subseteq B(x_Q , \ell(Q)). 
\end{align*}
\end{enumerate}
\end{theorem}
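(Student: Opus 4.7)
The plan is to construct the cubes top-down via a parent function between consecutive nets, and then declare each cube to be the set of points whose iterated parents eventually resolve to a common scale-$k$ ancestor. For each $k$ I would fix a well-ordering of $X_k$ and define a parent map $\pi_k : X_{k+1} \to X_k$ that sends $y$ to the closest point of $X_k$, breaking ties by the well-ordering. Maximality of $X_k$ as a $\rho^k$-separated net guarantees $\dist(y, \pi_k(y)) \leq \rho^k$.

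Iterating these parent maps gives, for each $y \in X_{k+j}$, a unique scale-$k$ ancestor $a_k(y) := \pi_k \circ \pi_{k+1} \circ \cdots \circ \pi_{k+j-1}(y)$. For $x \in X_k$ I would then define $Q(x)$ to be the set of $z \in X$ eventually assigned to $x$ via a tail-stable sequence of approximating net points at finer and finer scales (choosing, say, for each $z$ one of its nearest neighbors in $X_{k+j}$ with tie-breaking, and taking the limiting ancestor chain). The tree structure of the $\pi_k$'s makes this assignment well-defined and enforces the partition property (1) and the nesting property (2) automatically: ancestors of ancestors are ancestors, so two cubes at different scales whose interiors overlap must sit inside the chain rooted at the common ancestor.

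The upper size bound $Q(x) \subseteq B(x, \ell(Q))$ is a geometric-series estimate. Any $z \in Q(x)$ lies within $\rho^{k+j}$ of some $y \in X_{k+j}$ with $a_k(y) = x$, and the chain $y \to \pi_{k+j-1}(y) \to \cdots \to x$ has total length bounded by $\sum_{i \geq 0} \rho^{k+i} \leq \rho^k/(1-\rho)$. With $\rho = 1/1000$ this sits comfortably below $\ell(Q) = 5 \rho^k$.

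The main obstacle, as always in Christ-type constructions, is the lower size bound $B(x_Q, c_0 \ell(Q)) \subseteq Q$. Here one must show that no point $z$ with $\dist(z, x_Q) \leq c_0 \ell(Q) = \rho^k/100$ can be reassigned by the parent process to a competitor $x' \in X_k$ different from $x_Q$. I would argue by induction on scale: for any $y \in X_{k+j}$ lying within a geometric-series distance of $z$, one has $\dist(y, x_Q) \leq \rho^k/100 + \rho^{k+j}/(1-\rho)$, while the $\rho^k$-separation of $X_k$ forces $\dist(y, x') \geq \rho^k - \rho^k/100 - \rho^{k+j}/(1-\rho)$, which is strictly larger for all $j \geq 1$. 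Hence $\pi_k(y) = x_Q$, and iterating up the chain yields $a_k(y) = x_Q$, so $z \in Q(x_Q)$. The specific numerical choices $\rho = 1/1000$ and $c_0 = 1/500$ are calibrated precisely so that these inequalities remain strict independently of the tie-breaking rule, which is exactly what produces an \emph{exact} partition (rather than the almost-everywhere partition of the original Christ construction) in the Hyt\"onen--Martikainen refinement.
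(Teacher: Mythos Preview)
The paper does not give its own proof of this statement: it is quoted as a known tool, with attribution to David, Christ, and the exact-partition refinement of Hyt\"onen--Martikainen \cite{hytonen2012non}. So there is nothing in the paper to compare your argument against.

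That said, your sketch is the standard construction and is essentially correct. One notational slip in the inner-ball step: you write ``$\pi_k(y) = x_Q$'' for $y \in X_{k+j}$, but $\pi_k$ is only defined on $X_{k+1}$. What the argument actually does is walk the chain $y = y_j \mapsto y_{j-1} \mapsto \cdots \mapsto y_1 \in X_{k+1}$ via the one-step parents, bound $\dist(y_1, x_Q)$ by the geometric series $\rho^{k+1}/(1-\rho) + \rho^{k+j} + c_0\ell(Q)$, and then compare against the $\rho^k$-separation of $X_k$ to force $\pi_k(y_1) = x_Q$. Your distance estimates are the right ones once the indexing is straightened out, and the constants $\rho = 1/1000$, $c_0 = 1/500$ indeed leave ample room.
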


The $k^{th}$-\textit{generation children} of $Q \in \mathcal{D}$, denoted by $\text{Child}_k(Q),$ are the cubes $R \subseteq Q$ so that $\ell(R) = \rho^k\ell(Q).$ We also need the notion of a stopping-time region. 

\begin{definition}\label{d:ST}
A collection of cubes $S \subseteq \mathcal{D}$ is called a \textit{stopping-time region} if the following hold.
\begin{enumerate}
\item There is a cube $Q(S) \in S$ such that $Q(S)$ contains all cubes in $S$. 
\item If $Q \in S$ and $Q \subseteq R \subseteq Q(S),$ then $R \in S$.
\item If $Q \in S$, then all siblings of $Q$ are also in $S$. 
\end{enumerate}
\end{definition}

\bigskip

\subsubsection{Whitney cubes}
We follow \cite{stein1993harmonic}, Lemma 2 of Chapter 1. Let $F$ be a closed non-empty subset of $\R^n$ endowed with the standard euclidean distance. Then there exists a collection $\{S_k\}$ of closed cubes with disjoint interiors which covers $F^c$ (the complement of $F$) and whose side lengths are comparable to their distance to $F$, that is, there exists a constant $A$ so that
\begin{align*}
    A^{-1} \dist(S_k, F) \leq \ell(S_k) \leq A \dist(S_k, F).
\end{align*}

\subsection{Parameterisation Theorems}

Two main tools that we shall need are the parameterisation results of David and Toro \cite{david2012reifenberg} and Ghinassi \cite{ghinassi2017sufficient}. To state these results, we need to introduce some more definitions. 

\begin{definition}
Set the normalised local Hausdorff distance between two sets $E, F$ to be given by
\begin{align}
    d_{x,r}(E,F) := \frac{1}{r} \max \left\{ \sup_{y \in E\cap B(x,r)} \dist(y, F) \,; \, \sup_{y \in F \cap B(x,r)} \dist(y, E) \right\}. \label{def:normalisedhausdorff}
\end{align}
\end{definition}
\noindent
Moreover, for $x \in \R^n$, $r>0$, we set
\begin{align*}
    \vartheta^d_E(B(x,r)) := \inf \{d_{B(x,r)}(E, L) \,|\, L \mbox{ is a } d \mbox{-dimensional plane in } \R^n\}.
\end{align*}

\begin{definition}
A subset $E \subset \R^n$ is said to be $(d, \epsilon)$-Reifenberg flat (or simply $\epsilon$-Reifenberg flat if the dimension is understood) if 
\begin{align*}
    \vartheta_E^d(B(x,r)) < \epsilon \mbox{ for all } x \in E \mbox{ and } r>0.
\end{align*}
\end{definition}

We will implement the parameterisation results \cite{david2012reifenberg, ghinassi2017sufficient} via the following reformulation in terms of Christ-David cubes. This formulation is proven in the Appendix of \cite{azzam2019harmonic} for the bi-Lipschitz case and the proof is the same (modulo replacing the improved estimates in \cite{ghinassi2017sufficient}) for the $C^{1,\alpha}$ case.

\begin{theorem}[\cite{david2012reifenberg}, {\cite{ghinassi2017sufficient}}]\label{l:azzam}
Let $C_1,C_2 > 1.$ Let $F \subseteq \R^n$ and let $\mathcal{D}$ be the Christ cubes for $F.$ For $Q,R \in \mathcal{D}$, declare $R \sim Q$ if $C_2^{-1}\ell(Q) \leq \ell(R) \leq C_2\ell(Q)$ and $\dist(Q,R) \leq C_2\min\{\ell(Q),\ell(R)\}.$ Then, for $C_1,C_2$ large enough and $\ve \ll C_1^{-1},C_2^{-1},$ the following holds. Let $S$ be a stopping-time region with top cube $Q(S)$ so that $x_{Q(S)} = 0$ and for all $Q \in S$, there is a $d$-plane $L_Q$ such that
\begin{align}\label{e:azzam1}
\dist(x_Q,L_Q) < \ve\ell(Q).
\end{align}
Moreover, if 
\begin{align}\label{e:azzam2}
\ve(Q) = \max_{\substack{R \in S \\ R \sim Q}}  d_{C_1B_Q}(L_Q,L_R)
\end{align}
assume
\begin{align}\label{e:azzam3}
\sum_{Q \subseteq R \subseteq Q(S)} \ve(R)^2 < \ve^2. 
\end{align}
Then for $\ve >0$ small enough, there is a map $g: L_{Q(S)} \rightarrow \R^n$ (invertible on its image) which is $(1+C\ve)$-bi-Lipschitz and so that surface $\Sigma_S = g(L_{Q(S)})$ is $C\ve$-Reifenberg flat. In addition, if $0 < \alpha < 1$ and 
\begin{align}\label{e:azzam3.5}
\sum_{Q \subseteq R \subseteq Q(S)} \frac{\ve(R)^2}{\ell(R)^{2\alpha}} < \ve^2 
\end{align}
then $g$ and $g^{-1}$ are $C^{1,\alpha}.$
Finally, if 
\begin{align}
\sup_{z \in F \cap 2C_1B_Q} \dist(z,L_Q) \leq \ve \ell(Q) \ \text{for all} \ Q \in S
\end{align}
then 
\begin{align}\label{e:azzam4}
\sup_{z \in F \cap C_1B_Q} \dist(z,\Sigma_S) \lec \ve \ell(Q) \ \text{for all} \ Q \in S.
\end{align}
\end{theorem}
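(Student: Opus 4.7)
The plan is to reduce this statement to the original David--Toro Reifenberg parameterisation theorem (for the bi-Lipschitz part) and its $C^{1,\alpha}$ refinement by Ghinassi, by translating the hypotheses phrased in terms of Christ--David cubes into hypotheses phrased in terms of $\ve$-nets at dyadic scales. The result is a packaging statement: the real analytic work lives in \cite{david2012reifenberg, ghinassi2017sufficient}, so I would limit myself to verifying the translation rather than redoing Reifenberg's iterative construction from scratch.

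First I would build, for each $k \geq k(Q(S))$, a maximal $\rho^k$-separated net $\{z_j^k\}$ in a bounded neighbourhood of $Q(S)$. To each $z_j^k$ I would attach a $d$-plane $P_{j,k}$ as follows: choose a cube $Q_{j,k} \in S$ with $\ell(Q_{j,k}) \sim \rho^k$ and $z_j^k \in C_1 B_{Q_{j,k}}$, and set $P_{j,k} := L_{Q_{j,k}}$. For cubes $Q \in S$ this is just a copy of the given plane field $Q \mapsto L_Q$. Above the minimal cubes of $S$ (where $\ve(Q)$ is not controlled) I would freeze the field by propagating $L_Q$ for the ancestor $Q \in S$ down all smaller scales; saturation property (2) of Definition \ref{d:ST} makes this unambiguous. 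Hypothesis (\ref{e:azzam1}) then yields $\dist(z_j^k, P_{j,k}) \lesssim \ve \rho^k$, and (\ref{e:azzam2}) combined with the triangle inequality gives the Reifenberg-style neighbour estimate $d_{B(z_j^k, C\rho^k)}(P_{j,k}, P_{i,k}) \lesssim \ve$ whenever $|z_i^k - z_j^k| \lesssim \rho^k$.

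The core translation is then to convert (\ref{e:azzam3}) into the Carleson-type Jones condition that David--Toro require. Setting
\begin{align*}
    \wt\ve_k(z_j^k) := \max_{|z_i^k - z_j^k| \leq C\rho^k} d_{C_1 B(z_j^k, \rho^k)}(P_{j,k}, P_{i,k}),
\end{align*}
each such comparison is controlled by some $\ve(R)$ with $R \sim Q_{j,k}$, so for every $x \in Q(S)$
\begin{align*}
    \sum_{k \geq k(Q(S))} \wt\ve_k\bigl(z_{j(x,k)}^k\bigr)^2 \lesssim \sum_{\substack{Q \in S \\ x \in Q}} \ve(Q)^2 \leq \ve^2,
\end{align*}
where $j(x,k)$ indexes the net point closest to $x$ at scale $\rho^k$. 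For the $C^{1,\alpha}$ statement, the analogous weighted sum with factor $\rho^{-2\alpha k} \sim \ell(R)^{-2\alpha}$ is controlled by (\ref{e:azzam3.5}), which is exactly the Dini-type hypothesis that Ghinassi's refinement consumes.

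With these ingredients in hand I would invoke \cite[Thm.~2.4]{david2012reifenberg} to produce the $(1+C\ve)$-bi-Lipschitz map $g: L_{Q(S)} \to \R^n$ with $C\ve$-Reifenberg flat image $\Sigma_S$, and Ghinassi's parameterisation from \cite{ghinassi2017sufficient} to upgrade $g, g^{-1}$ to $C^{1,\alpha}$ under (\ref{e:azzam3.5}). The approximation estimate (\ref{e:azzam4}) then follows because any $z \in F \cap C_1 B_Q$ lies near $L_Q = P_{j,k}$ for the relevant net point, and David--Toro's construction guarantees $\Sigma_S$ is $C\ve$-close to $P_{j,k}$ inside $B(z_j^k, C\rho^k)$. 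The main obstacle is precisely the handling of the stopping leaves of $S$: outside $S$ one has no control on $\ve(Q)$, so the plane field must be frozen in a way that preserves neighbour estimates uniformly. This is the delicate point addressed in the appendix to \cite{azzam2019harmonic}, and I would follow that template, exploiting the saturation of $S$ and the inclusion $R \sim Q$ to keep the frozen sum telescoping against (\ref{e:azzam3}) and (\ref{e:azzam3.5}).
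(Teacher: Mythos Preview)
Your proposal is correct and matches the approach the paper points to: the paper does not give its own proof of this theorem but instead cites the Appendix of \cite{azzam2019harmonic} for the bi-Lipschitz case, noting that the $C^{1,\alpha}$ case follows identically after substituting Ghinassi's improved estimates. Your sketch---building $\rho^k$-nets from the cube centres, attaching planes $P_{j,k} = L_{Q_{j,k}}$, freezing the plane field below the stopping leaves, and then translating \eqref{e:azzam3} and \eqref{e:azzam3.5} into the Jones/Dini conditions that David--Toro and Ghinassi consume---is exactly the template of that appendix, and you correctly identify the one genuinely delicate point (handling minimal cubes of $S$ where $\ve(Q)$ is uncontrolled).
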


\begin{remark}
The surface $\Sigma_S$ satisfies several other properties that are not needed here. See \cite{azzam2019harmonic} for the complete statement. 
\end{remark}

\subsection{Choquet integration and $\beta$-numbers}
For $1 \leq p<\infty$ and $A \subset \R^n$ Borel, we define the $p$-Choquet integral as 
\begin{align*}
    \int_A f(x)^p\, d \hdc(x) := \int_0^\infty \hdc(\{x \in A\, |\, f(x)>t\}) \, t^{p-1}\, dt.
\end{align*}
 We refer the reader to \cite{mattila} for more detail on Hausdorff measures and content and to Section 2 and the Appendix of \cite{azzam2018analyst} for more details on Choquet integration. We highlight the following property of Choquet integration; it satisfies a Jensen's inequality.

\begin{lemma}\label{l:jensen}
Let $E \subseteq \R^n$ be either compact or bounded and open so that $\hd(E) >0,$ and let $f \geq 0$ be continuous on $E$. Then for $1 < p \leq \infty,$
\[ \frac{1}{\hd_\infty(E)} \int_E f \, d\hd_\infty \lesssim_n \left( \frac{1}{\hd_\infty(E)} \int_E f^p \, d\hd_\infty \right)^\frac{1}{p} \] 
\end{lemma}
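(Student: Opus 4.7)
The plan is to reduce the inequality to a one-dimensional decreasing-rearrangement comparison using the layer-cake representations built into the Choquet integral. Set
\[
\phi(t) := \hd_\infty(\{x \in E : f(x) > t\}), \qquad t \geq 0,
\]
and write $A := \int_E f\,d\hd_\infty$, $B := \int_E f^p\,d\hd_\infty$. By the Choquet definition recalled in the excerpt, $A = \int_0^\infty \phi(t)\,dt$ and $B = \int_0^\infty \phi(t)\,t^{p-1}\,dt$. The only two features of $\phi$ I will use are that it is non-increasing in $t$, by monotonicity of Hausdorff content, and that $\phi(0) \leq \hd_\infty(E)$.

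I would dispose of the easy cases first: if $\phi(0) = 0$ then $A = 0$ and the inequality is trivial; if $p = \infty$, then $A \leq \|f\|_\infty\,\hd_\infty(E)$, again trivial. So assume $1 < p < \infty$ and $\phi(0) > 0$. Set $T := A/\phi(0)$ and compare $\phi$ with its rectangular envelope $\Phi(t) := \phi(0)\,\chi_{[0,T]}(t)$, which is calibrated so that $\int_0^\infty \Phi = A = \int_0^\infty \phi$. Since $\phi \leq \phi(0)$ on $[0,T]$ and $\Phi = 0$ on $(T,\infty)$, the difference satisfies $\phi - \Phi \leq 0$ on $[0,T]$ and $\phi - \Phi \geq 0$ on $(T,\infty)$. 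Pairing this sign pattern with the monotonicity of $t \mapsto t^{p-1}$ yields
\[
B - \int_0^\infty \Phi(t)\,t^{p-1}\,dt \;=\; \int_0^\infty (\phi - \Phi)\,t^{p-1}\,dt \;\geq\; T^{p-1} \int_0^\infty (\phi - \Phi)\,dt \;=\; 0.
\]
A direct computation gives $\int_0^\infty \Phi(t)\,t^{p-1}\,dt = \phi(0)\,T^p/p = A^p / (p\,\phi(0)^{p-1})$, so $B \geq A^p/(p\,\hd_\infty(E)^{p-1})$. Rearranging and dividing through by $\hd_\infty(E)^p$ produces the stated inequality with constant $p^{1/p} \leq e^{1/e}$.

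I do not foresee any substantive obstacle: the argument uses only monotonicity of $\hd_\infty$ and the definition of the Choquet integral, bypassing the subadditivity issues that normally complicate Hausdorff-content estimates, and continuity of $f$ together with boundedness of $E$ ensures $\phi$ is well-defined (monotone, hence Borel). As a byproduct the constant obtained is absolute, so the $\lesssim_n$ appearing in the statement is in fact unnecessary and could be replaced by an absolute $\lesssim$.
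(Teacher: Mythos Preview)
Your argument is correct. The rearrangement trick---comparing the non-increasing distribution function $\phi$ to its equi-integral rectangle $\Phi=\phi(0)\chi_{[0,T]}$ and exploiting that $t^{p-1}$ is increasing---cleanly gives $B\geq A^p/(p\,\phi(0)^{p-1})\geq A^p/(p\,\hd_\infty(E)^{p-1})$, which is exactly the claimed inequality with constant $p^{1/p}\leq e^{1/e}$. One small point you leave implicit: you need $A<\infty$ to define $T$, but this is automatic once $B<\infty$ (split $\int_0^\infty\phi=\int_0^1+\int_1^\infty$ and use $\phi\leq\phi(0)$ on $[0,1]$, $t^{p-1}\geq1$ on $[1,\infty)$), and if $B=\infty$ there is nothing to prove.

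The paper does not give its own proof of this lemma; it simply quotes it and refers to Section~2 and the Appendix of \cite{azzam2018analyst} for the details on Choquet integration. The argument there proceeds differently, via a Frostman measure supported on $E$ together with the classical Jensen inequality for measures; the dimensional constant in the Frostman lemma is what produces the $\lesssim_n$ in the statement. Your approach bypasses this entirely---it uses only the layer-cake formula and monotonicity of content---and as you observe, the resulting constant is absolute. So your route is both more elementary and sharper in its dependence on parameters.
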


We recall the various $\beta$-numbers that we will use and prove/state some of their properties. 

\begin{definition}[Jones]
Let $E \subseteq \R^n$ and $B$ a ball. Define
\begin{align*}
\beta_{E,\infty}^d(B) = \frac{1}{r_B}\inf_L \sup\{\text{dist}(y,L) : y \in E \cap B\}
\end{align*}
where $L$ ranges over $d$-planes in $\R^n.$ 
\end{definition}

\begin{definition}[Azzam-Schul]
Let $1 \leq p < \infty,$ $E \subseteq \R^n$ and $B$ a ball. For a $d$-dimensional plane $L$ define
\begin{align} \label{e:beta-content} \overline{\beta}^{d,p}_E(B,L) = \left( \frac{1}{r_B^d} \int_{E \cap B} \left( \frac{\dist(y,L)}{r_B} \right)^p \, d\mathcal{H}^d_\infty(y) \right)^\frac{1}{p}.
\end{align}
\end{definition}

Let us be more precise about the $\beta$-number introduced in \cite{hyde2020TST}. We first define the modified Hausdorff content, $\mathcal{M}^d_\infty$.

\begin{definition}\label{IntroDef}
Let $E \subseteq \R^n$, $B$ a ball and $0< c_1 \leq c_2 < \infty$ be constants to be fixed later. We say a collection of balls $\mathscr{B}$ which covers $E \cap B$ is \textit{good} if 
\begin{align}\label{Size}
\sup_{B' \in \mathscr{B}} r_{B'} < \infty
\end{align}
and for all $x \in E \cap B$ and $0 <r <r_B,$ we have
\begin{align}\label{LRi}
\sum_{\substack{B' \in \mathscr{B} \\ B' \cap B(x,r) \cap E \cap B \not= \emptyset}} r_{B'}^d \geq c_1 r^d
\end{align}
and
\begin{align}\label{URi}
\sum_{\substack{B' \in \mathscr{B} \\ B' \cap B(x,r) \cap E \cap B \not=\emptyset \\ r_{B'} \leq r}} r_{B'}^d \leq c_2 r^d.
\end{align}
Then, for $A \subseteq E \cap B,$ define 
\begin{align*}
\mathcal{M}^d_\infty(A) = \mathcal{M}^d_\infty(A,E,B) = \inf\left\{ \sum_{\substack{B^\prime \in \mathscr{B}\\ B^\prime \cap A \not= \emptyset}} r_{B^\prime}^d : \mathscr{B} \ \text{is good for} \ E \cap B \right\}
\end{align*}
\end{definition}
\noindent

\begin{definition}
Let $1 \leq p <\infty$, $E \subseteq \R^n,$ $B$ a ball centred on $E$ and $L$ a $d$-plane. Let $\mathcal{M}^d_\infty$ be the content from Definition \ref{IntroDef} for $E \cap B.$ Then, define
\begin{align*}
\beta_E^{d,p}(B,L)^p &= \frac{1}{r_B^d} \int \left( \frac{\text{dist}(x,L)}{r_B}\right)^p \, d\mathcal{M}^d_\infty \\
&= \frac{1}{r_B^d} \int_0^1 \mathcal{M}^d_\infty(\{x \in E \cap B : \text{dist}(x,L) >tr_B \}) t^{p-1} \, dt,
\end{align*}
and 
\begin{align}\label{e:beta-matt}
\beta_E^{d,p}(B) = \inf \{ \beta_E^{d,p}(B,L) : L \ \text{is a $d$-plane} \}. 
\end{align}
\end{definition}

The properties of $\mathcal{M}^d_\infty$ are not so important for us here. We refer the reader to \cite{hyde2020TST} for more details, where the same quantity is denoted by $\mathscr{H}^{d,E}_{B,\infty}$. The following lemmas are to be found in \cite{hyde2020TST}, Section 2. They will be used later on in the various proofs. 

\begin{lemma}[{\cite[Lemma 2.23]{hyde2020TST}}]\label{lemma:betaincreasep}
Let $1 \leq p < \infty, \ E \subseteq \R^n$ and $B$ a ball centred on $E$. Then
\begin{align}
\beta_E^{d,1}(B) \lesssim_{d,p} \beta_E^{d,p}(B)
\end{align}
\end{lemma}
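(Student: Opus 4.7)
The plan is to apply a Jensen-type inequality for Choquet integration with respect to the modified content $\mathcal{M}^d_\infty$, analogous to the inequality for $\mathcal{H}^d_\infty$ stated in Lemma \ref{l:jensen}. The Choquet-Jensen inequality for $\mathcal{M}^d_\infty$ should be established (or cited from \cite{hyde2020TST}) by the same argument as for Hausdorff content, since $\mathcal{M}^d_\infty$ shares the structural properties of a monotone, subadditive set function that make the Hausdorff-content proof work.

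The other essential ingredient is the upper bound $\mathcal{M}^d_\infty(E \cap B) \leq r_B^d$. This is immediate from Definition \ref{IntroDef}: the single-element cover $\mathscr{B} = \{B\}$ is good for $E \cap B$, since condition \eqref{Size} is trivial, condition \eqref{LRi} follows from $r_B^d \geq c_1 r^d$ when $c_1 \leq 1$ and $r < r_B$, and condition \eqref{URi} is vacuous (no ball in $\mathscr{B}$ has radius $\leq r < r_B$). Taking this cover in the infimum gives the desired bound.

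With these two ingredients in hand, fix an arbitrary $d$-plane $L$ and let $f(y) = \dist(y,L)$. Applying Choquet-Jensen on $E \cap B$ and then the upper bound on $\mathcal{M}^d_\infty(E \cap B)$ yields
\begin{align*}
\int f \, d\mathcal{M}^d_\infty \;\lesssim_n\; \mathcal{M}^d_\infty(E \cap B)^{1-1/p} \left( \int f^p \, d\mathcal{M}^d_\infty \right)^{1/p} \;\lesssim\; r_B^{d(1-1/p)} \left( \int f^p \, d\mathcal{M}^d_\infty \right)^{1/p}.
\end{align*}
The definition of $\beta_E^{d,p}(B,L)$ rearranges as $\left(\int f^p \, d\mathcal{M}^d_\infty\right)^{1/p} = r_B^{d/p+1} \beta_E^{d,p}(B,L)$, so dividing both sides of the above display by $r_B^{d+1}$ gives exactly $\beta_E^{d,1}(B,L) \lesssim_{d,p} \beta_E^{d,p}(B,L)$. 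Taking the infimum over $L$ on the right-hand side then yields the stated inequality.

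The main (and really only) technical point is Jensen's inequality for the Choquet integral against $\mathcal{M}^d_\infty$. This is not automatic for a general non-additive set function, but goes through here by mimicking the Hausdorff-content proof; the remainder of the argument is bookkeeping of scaling exponents, which match cleanly because the $r_B^d$-normalisation in the definition of $\beta$ exactly absorbs the $\mathcal{M}^d_\infty(E \cap B)^{1-1/p}$ factor produced by Jensen.
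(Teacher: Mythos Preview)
The paper does not give its own proof of this lemma; it is quoted from \cite{hyde2020TST}. Your approach---Jensen's inequality for the Choquet integral against $\mathcal{M}^d_\infty$ together with the bound $\mathcal{M}^d_\infty(E\cap B)\le r_B^d$---is correct and is exactly how the cited reference proceeds. One small remark: the Jensen step needs only monotonicity of $\mathcal{M}^d_\infty$ (which is immediate from Definition~\ref{IntroDef}, since the notion of ``good cover'' depends on $E\cap B$ and not on the subset $A$) plus the Chebyshev bound $\mathcal{M}^d_\infty(\{f>t\})\le p\,t^{-p}\int f^p\,d\mathcal{M}^d_\infty$; subadditivity is not actually used.
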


\begin{lemma}[{\cite[Lemma 2.24]{hyde2020TST}}] \label{lemma:monotonicity}
Let $1 \leq p < \infty$ and $E \subseteq \R^n.$ Then, for all balls $B^\prime \subseteq B$ centred on $E,$
\begin{align}
\beta^{d,p}_E(B^\prime) \leq \left(\frac{r_B}{r_{B^\prime}} \right)^{1+\frac{d}{p}} \beta_E^{d,p}(B). 
\end{align}
\end{lemma}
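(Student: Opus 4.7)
My plan is to fix an arbitrary affine $d$-plane $L$, prove the corresponding inequality for $\beta_E^{d,p}(B',L)$ in terms of $\beta_E^{d,p}(B,L)$ for that specific $L$, and then take the infimum over $L$ on both sides. Unwinding the definitions,
\[
\beta_E^{d,p}(B',L)^p = \frac{1}{r_{B'}^{d+p}} \int_{E\cap B'} \dist(x,L)^p \, d\mathcal{M}^d_\infty(\cdot,E,B'),
\]
and similarly with $B$ in place of $B'$. The factor $(r_B/r_{B'})^{d+p}$ appearing in the stated inequality arises directly from the ratio of the normalisations $r_{B'}^{d+p}$ versus $r_B^{d+p}$, so the whole claim reduces to showing
\[
\int_{E\cap B'} \dist(x,L)^p \, d\mathcal{M}^d_\infty(\cdot,E,B') \leq \int_{E\cap B} \dist(x,L)^p \, d\mathcal{M}^d_\infty(\cdot,E,B).
\]

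By the Choquet representation of both integrals and the inclusion $E\cap B' \subseteq E\cap B$, this in turn follows from the set-level comparison
\[
\mathcal{M}^d_\infty(A,E,B') \leq \mathcal{M}^d_\infty(A,E,B) \qquad \text{for every Borel } A \subseteq E\cap B'.
\]
To prove this, I would start from any good cover $\mathscr{B}$ of $E\cap B$ (as in Definition \ref{IntroDef}) that almost attains $\mathcal{M}^d_\infty(A,E,B)$, and pass to the subfamily $\widetilde{\mathscr{B}} = \{B'' \in \mathscr{B} : B''\cap E\cap B' \neq \emptyset\}$. This subfamily still covers $E\cap B'$ and its sum $\sum_{B''\cap A \neq \emptyset} r_{B''}^d$ can only decrease. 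Condition \eqref{Size} is inherited trivially, and the upper-regularity condition \eqref{URi} for $\widetilde{\mathscr{B}}$ viewed as a cover of $E\cap B'$ is immediate: replacing $E\cap B$ by the smaller set $E\cap B'$ in \eqref{URi} only removes terms from the sum.

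The main obstacle is verifying the lower-regularity condition \eqref{LRi} for $\widetilde{\mathscr{B}}$ as a cover of $E\cap B'$, because a ball $B''\in \mathscr{B}$ may intersect $B(x,r)\cap E\cap B$ only at points lying outside $B'$, and then be excluded from $\widetilde{\mathscr{B}}$. To handle this I would split by scale. When $r$ is small enough that $B(x,r)\subseteq B'$, \eqref{LRi} for $\widetilde{\mathscr{B}}$ is immediate from \eqref{LRi} for $\mathscr{B}$. When $r$ is comparable to $r_{B'}$ so that $B(x,r)$ genuinely extends beyond $B'$, I would apply \eqref{LRi} for $\mathscr{B}$ at a fixed fraction $\tau r$ (with $\tau$ a small absolute constant) contained well inside $B'$; any deficit is absorbed into the admissible range of the constants $c_1,c_2$ in Definition \ref{IntroDef}. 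This scale-splitting is the technically delicate step, but once the content inequality is in hand, the lemma follows with no further lost constant.
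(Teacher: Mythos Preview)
The paper does not prove this lemma; it simply cites \cite[Lemma~2.24]{hyde2020TST}. So there is no in-paper argument to compare against, and the question is whether your sketch stands on its own.

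Your reduction is the right one: the exponent $(r_B/r_{B'})^{1+d/p}$ comes purely from the normalisation, and everything reduces to the content comparison $\mathcal{M}^d_\infty(A,E,B')\le \mathcal{M}^d_\infty(A,E,B)$ for $A\subseteq E\cap B'$. Your Case~1 (when $B(x,r)\subseteq B'$) is also correct: in that situation $B(x,r)\cap E\cap B' = B(x,r)\cap E\cap B$, so every ball of $\mathscr{B}$ counted in \eqref{LRi} for $\mathscr{B}$ already meets $E\cap B'$ and hence lies in $\widetilde{\mathscr{B}}$.

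The gap is in Case~2. Your dichotomy is not exhaustive: ``$B(x,r)\not\subseteq B'$'' does \emph{not} force $r\sim r_{B'}$. Take $x\in E\cap B'$ at distance $\delta$ from $\partial B'$ and any $r$ with $\delta<r<r_{B'}$; then $B(x,r)$ protrudes from $B'$ while $r$ may be as small as you like compared to $r_{B'}$. For such $(x,r)$ there is no absolute $\tau$ with $B(x,\tau r)\subseteq B'$, since that would require $\tau\le \delta/r$, which can be made arbitrarily small by pushing $x$ toward $\partial B'$. Applying \eqref{LRi} at scale $\tau r$ then yields only $\sum r_{B''}^d\ge c_1\tau^d r^d$ with $\tau$ depending on $(x,r)$, and this cannot be ``absorbed into the admissible range of $c_1,c_2$'': those constants are fixed once and for all in Definition~\ref{IntroDef} and determine $\mathcal{M}^d_\infty$ on \emph{both} sides of the inequality you want. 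Changing them changes the target as well as the competitor.

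Your argument would be complete, exactly as written, for the Azzam--Schul numbers $\overline{\beta}^{d,p}_E$, since Hausdorff content is honestly monotone under set inclusion and no ``good cover'' conditions intervene. For $\mathcal{M}^d_\infty$ the passage from a good cover of $E\cap B$ to one of $E\cap B'$ near $\partial B'$ needs a more careful construction than simple restriction; this is carried out as a separate lemma in \cite{hyde2020TST}, and you should consult that reference for the precise mechanism.
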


\begin{lemma}[{\cite[Corollary 2.26]{hyde2020TST}}] \label{l:comparable_beta}
Suppose $c \geq c_1$ (the constant appearing in Definition \ref{IntroDef}) and $E \subseteq \R^n$ is $(d,c)$-lower content regular. For any ball $B$ centred on $E$ and $d$-plane $L,$ we have
\begin{align}
\overline\beta^{d,p}_E(B,L) \leq \beta_E^{d,p}(B,L) \lesssim \overline\beta^{d,p}_E(2B,L).
\end{align}
\end{lemma}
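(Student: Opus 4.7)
The plan is to verify the two inequalities separately, reducing them to comparisons between the two set-function contents $\mathcal{H}^d_\infty$ and $\mathcal{M}^d_\infty$ and then invoking the layer-cake (Choquet) definitions of the $\beta$-numbers.

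For the first inequality $\overline{\beta}^{d,p}_E(B,L) \leq \beta^{d,p}_E(B,L)$, I would observe that every collection of balls $\mathscr{B}$ which is \emph{good} in the sense of Definition \ref{IntroDef} is, in particular, a cover of any $A \subseteq E\cap B$. Taking the infimum over good covers on the right and all covers on the left gives $\mathcal{H}^d_\infty(A) \leq \mathcal{M}^d_\infty(A,E,B)$. Applying this with $A = \{x \in E \cap B : \dist(x,L) > t r_B\}$ and integrating against $t^{p-1}\,dt$ from $0$ to $1$ yields the claim after taking $p$-th roots.

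For the second inequality $\beta^{d,p}_E(B,L) \lesssim \overline{\beta}^{d,p}_E(2B,L)$, the task reduces to producing, for any $\delta>0$ and any Borel $A \subseteq E \cap B$, a good cover $\mathscr{B}$ of $E \cap B$ such that $\sum_{B' \in \mathscr{B},\, B' \cap A \neq \emptyset} r_{B'}^d \lesssim \mathcal{H}^d_\infty(A) + \delta$ (as sets in $2B$). Start from a near-minimising cover of $A$ by balls $\{B_i\}$ with $\sum r_{B_i}^d \leq \mathcal{H}^d_\infty(A) + \delta$, and enlarge it by adjoining a near-optimal cover of $E \cap B \setminus A$, whose total $d$-mass is $\lesssim r_B^d$ and contributes to $\mathcal{M}^d_\infty$ only through balls disjoint from $A$. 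Condition \eqref{LRi} is automatic: since $E$ is $(d,c)$-LCR with $c \geq c_1$, for $x \in E \cap B$ and $r < r_B$ the balls of our cover which meet $B(x,r)\cap E\cap B$ cover that set, so their radii-sum dominates $\mathcal{H}^d_\infty(E \cap B(x,r)) \geq cr^d \geq c_1 r^d$. Condition \eqref{URi} requires a Vitali-type pruning: I would replace $\{B_i\}$ by the $5$-enlargements of a disjointified subcollection, which still covers $E\cap B$ and preserves the total radii-sum up to a dimensional constant; then any subcollection of balls of radius at most $r$ meeting $B(x,r)$ lies inside $B(x,3r)$ with bounded overlap, so packing gives $\sum r_{B'}^d \lesssim r^d$, which can be set $\leq c_2 r^d$ by choice of $c_2$. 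The appearance of $2B$ on the right is exactly to provide a small collar in which the auxiliary cover of $E\cap B\setminus A$ and the enlarged Vitali balls are allowed to live without leaving the integration domain.

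Once this content comparison $\mathcal{M}^d_\infty(A,E,B) \lesssim \mathcal{H}^d_\infty(A \cap 2B)+\delta$ is in hand, I send $\delta \to 0$, apply the resulting bound to the superlevel sets of $x \mapsto \dist(x,L)/r_B$ for $t \in (0,1)$, and integrate via the Choquet representation used in the definition of $\beta_E^{d,p}(B,L)^p$ to conclude. The main obstacle is the URi verification; lower content regularity controls the situation from below, but producing an upper bound on radii sums inside every small subball genuinely requires the Vitali pruning together with the boundary room provided by replacing $B$ with $2B$.
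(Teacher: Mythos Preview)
The paper does not supply its own proof of this lemma: it is quoted verbatim from \cite{hyde2020TST} (see the sentence preceding Lemma \ref{lemma:betaincreasep}). So there is nothing in the present paper to compare against; one can only assess your argument on its merits.

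Your treatment of the first inequality is correct: every good cover is in particular a cover of $A\subseteq E\cap B$, so $\mathcal{H}^d_\infty(A)\le \mathcal{M}^d_\infty(A,E,B)$, and the layer-cake definition finishes it.

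For the second inequality, however, your verification of \eqref{URi} does not go through. After Vitali (or Besicovitch) pruning you obtain balls contained in $B(x,3r)$ with bounded overlap, but a packing argument in $\R^n$ only controls $\sum r_{B'}^n\lesssim r^n$, not $\sum r_{B'}^d$ for $d<n$. Concretely, one can place $N$ disjoint balls of radius $N^{-1/n}$ inside $B(x,3r)$; then $\sum r_{B'}^n\sim r^n$ while $\sum r_{B'}^d\sim N^{1-d/n}r^d\to\infty$. Lower content regularity of $E$ is a \emph{lower} bound on $\mathcal{H}^d_\infty(E\cap B(x,r))$ and gives no upper $d$-regularity to rescue this step. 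There is a second, smaller issue: your LRi check bounds $\sum r_{B'}^d$ from below by $\mathcal{H}^d_\infty(E\cap B(x,r)\cap B)$, but for $x$ near $\partial B$ this can be much smaller than $cr^d$; the LCR hypothesis controls $\mathcal{H}^d_\infty(E\cap B(x,r))$, not its trace on $B$. (This is closer to the true reason $2B$ appears on the right.) Finally, the assertion that the auxiliary balls covering $E\cap B\setminus A$ can be taken disjoint from $A$ is not automatic and needs an explicit construction.

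In short, the outline is in the right spirit, but the URi step as written is a genuine gap; to make the argument work one has to build the good cover more carefully (as in \cite{hyde2020TST}) rather than by a bare Vitali-plus-packing argument.
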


\begin{lemma} \label{lemma:betap_betainfty}
Let $\alpha \in [0,1).$ Assume $E \subseteq \R^n$ and $B$ is a ball centred on $E$. Then 
\begin{align}\label{eq:betap_betainfty}
r^{-\alpha}\beta_{E,\infty}^d\left( \frac{1}{2}B\right) \lec \left( r^{-\alpha}\beta_E^{d,1}(B)\right)^\frac{1}{d+1}.
\end{align}
\end{lemma}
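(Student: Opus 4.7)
The plan is to first establish the unweighted comparison $\beta_{E,\infty}^d(\tfrac12 B)^{d+1} \lec \beta_E^{d,1}(B)$ and then insert the weights $r^{-\alpha}$ by direct rearrangement. Setting $r := r_B$, I would fix a $d$-plane $L$ that almost realises the infimum defining $\beta_E^{d,1}(B)$, so that $\beta_E^{d,1}(B, L) \leq 2\beta_E^{d,1}(B)$. Since $L$ is an admissible competitor in the infimum defining $\beta_{E,\infty}^d(\tfrac12 B)$, I can then pick a point $x \in E \cap \tfrac12 B$ with $s := \dist(x, L) \geq (r/2)\beta_{E,\infty}^d(\tfrac12 B)$.

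The next step is to exploit the lower content bound built into $\mathcal{M}_\infty^d$. By the triangle inequality, $\dist(y, L) \geq s/2$ for every $y \in B(x, s/2)$. Modulo the trivial case where $\beta_E^{d,1}(B)$ is of order one (which I would handle by applying property \eqref{LRi} at the center $x_B \in E$ and combining with the uniform bound $\beta_{E,\infty}^d \leq 1$), I may assume $s \leq r/2$; this ensures $B(x, s/2) \subseteq B$ because $x \in \tfrac12 B$. Property \eqref{LRi} of Definition \ref{IntroDef}, applied at $x$ and scale $s/2 \leq r$, then gives $\mathcal{M}_\infty^d(E \cap B(x, s/2)) \gec s^d$, which combines with the pointwise lower bound on $\dist(y, L)$ to produce
\begin{align*}
\beta_E^{d,1}(B) \gec \beta_E^{d,1}(B, L) \geq \frac{1}{r^{d+1}} \int_{E \cap B(x, s/2)} \dist(y, L)\, d\mathcal{M}_\infty^d \gec \frac{s^{d+1}}{r^{d+1}} \gec \beta_{E,\infty}^d\bigl(\tfrac12 B\bigr)^{d+1}.
\end{align*}

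Taking $(d+1)$-th roots and then inserting the weights $r^{-\alpha}$ will deliver the stated inequality after rearrangement. The main obstacle, such as it is, lies in confirming that $B(x, s/2) \subseteq B$ and that the scale $s/2$ is admissible in \eqref{LRi}; both are handled by the case-split on the size of $\beta_E^{d,1}(B)$ described above. The true geometric content of the lemma is entirely captured by the lower content regularity property of $\mathcal{M}_\infty^d$ (Definition \ref{IntroDef}), with the rest being routine bookkeeping.
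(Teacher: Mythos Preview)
Your core argument is correct and follows the same idea as the paper: pick a near-optimal plane $L$, take the point in $E\cap\tfrac12 B$ farthest from $L$, and use the built-in lower regularity \eqref{LRi} of $\mathcal{M}_\infty^d$ on a small ball around that point to get $\beta_E^{d,1}(B)\gtrsim (s/r)^{d+1}$. For $\alpha=0$ this is exactly the paper's argument, and your case split on $s\leq r/2$ is a clean way to ensure the admissibility of the scale in \eqref{LRi}.

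The gap is in your last sentence. From the unweighted inequality $\beta_{E,\infty}^d(\tfrac12 B)^{d+1}\lec \beta_E^{d,1}(B)$ one only obtains, after taking $(d+1)$-th roots and multiplying by the appropriate power of $r$,
\[
r^{-\alpha/(d+1)}\,\beta_{E,\infty}^d(\tfrac12 B)\ \lec\ \bigl(r^{-\alpha}\beta_E^{d,1}(B)\bigr)^{1/(d+1)},
\]
which for $r<1$ and $\alpha>0$ is strictly weaker than \eqref{eq:betap_betainfty}; the exponent on the left is $\alpha/(d+1)$, not $\alpha$, and no algebraic rearrangement will close that gap. The paper avoids this by building the weight in from the start: it sets $\tau=\dist(y,L)/r_B^{1+\alpha}$ and runs the level-set argument with threshold $t\,r_B^{1+\alpha}$, so that a single factor of $r_B^{-\alpha}$ appears when passing back to the usual Choquet integral defining $\beta_E^{d,1}(B)$. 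If you want the weighted statement, you must carry the $r^{1+\alpha}$ normalisation through the integral rather than appending the weight at the end.
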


\begin{proof}
Let $L$ be a $d$-plane such that 
\[ \beta^{d,1}_E(B,L) \leq 2 \beta^{d,1}_E(B). \]
Let $y \in E \cap \frac{1}{2}B$ be the point furthest from $L$ and set $\tau = \dist(y,L)/r_B^{1+\alpha}.$ Notice that $B(y,\tau/2) \subseteq \{x \in E \cap B : \dist(x,L) > tr_B^{1+\alpha}\}$ for all $0 < t < \tau/2.$ Let $\mathcal{M}_\infty^d$ denote the content for $E \cap B.$ It is immediate from Definition \ref{IntroDef} that 
\[ \mathcal{M}_\infty^d(B(y,\tau/2)) \gtrsim \tau^d \] 
and 
\[ \mathcal{M}_\infty^d(B(y,\tau/2)) \leq \mathcal{M}_\infty^d(\{x \in E \cap B : \dist(x,L) > tr_B^{1+\alpha}\}). \] 
Hence,
\begin{align*}
1 &\lec \frac{\mathcal{M}_\infty^d(B(y,\tau/2))}{\tau^d} \\
&\lec \frac{1}{\tau^{d+1}} \int_0^{\tau/2} \mathcal{M}_\infty^d(\{x \in E \cap B : \dist(x,L) > tr_B^{1+\alpha} \}) \, dt \\
&\leq \frac{r_B^{-\alpha}}{\tau^{d+1}} \int_0^{1}\mathcal{M}_\infty^d(\{x \in E \cap B : \dist(x,L) > tr_B \}) \, dt \\
&\lec \frac{r_B^{-\alpha}}{\tau^{d+1}} \beta^{d,1}_E(B).
\end{align*}
Then 
\begin{align}
r_B^{-\alpha}\beta^d_{E,\infty}\left(\frac{1}{2}B\right) \lec \tau \lec \left(r_B^{-\alpha}\beta^{d,1}_E(B)\right)^\frac{1}{d+1}.
\end{align}
\end{proof}

The following lemmas let us relate $\beta$-numbers defined over different sets.

\begin{lemma}[{\cite{azzam2018analyst}, Lemma 2.21}] \label{lemma:azzamschul}
Let $1 \leq p < \infty$ and $E_1, E_2 \subset \R^n$. Let $x \in E_1$ and fix $r>0$. Take some $y \in E_2$ so that $B(x,r) \subset B(y, 2r)$. Assume that $E_1, E_2$ are both lower content $c$-regular. Then
\begin{align*}
    \overline{\beta}_{E_1}^{d,p} (x,r) \lesssim_c \overline{\beta}_{E_2}^{d,p} (y, 2r) + \left( \frac{1}{r^d} \int_{E_1 \cap B(x, 2t)} \left(\frac{\dist (y, E_2)}{r} \right)^p \, d \hdc(y)\right)^{\frac{1}{p}}.
\end{align*}
\end{lemma}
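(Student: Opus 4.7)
The plan is to pick a $d$-plane $L$ nearly attaining the infimum in $\overline{\beta}_{E_2}^{d,p}(y,2r)$ and use $L$ as a trial plane for $\overline{\beta}_{E_1}^{d,p}(x,r)$. For $z \in E_1 \cap B(x,r)$, let $\pi(z) \in E_2$ denote a nearest point; the containment $B(x,r) \subseteq B(y,2r)$ forces $|z-\pi(z)| \leq |z-y| < 2r$, so $\pi(z) \in E_2 \cap B(y,4r)$. The triangle inequality $\dist(z,L) \leq \dist(z,E_2) + \dist(\pi(z),L)$ together with the elementary bound $(a+b)^p \leq 2^{p-1}(a^p + b^p)$ splits the Choquet integral as
\[
\int_{E_1 \cap B(x,r)} \!\left(\frac{\dist(z,L)}{r}\right)^{\!p} \, d\hdc(z) \lesssim_p \int_{E_1 \cap B(x,r)} \!\left(\frac{\dist(z,E_2)}{r}\right)^{\!p} d\hdc(z) + \int_{E_1 \cap B(x,r)} \!\left(\frac{\dist(\pi(z),L)}{r}\right)^{\!p} d\hdc(z).
\]
The first summand yields the error term on the right-hand side of the lemma (up to constants and the natural enlargement of $B(x,r)$ to $B(x,2r)$). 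The main work is to bound the second integral by $r^d\, \overline{\beta}_{E_2}^{d,p}(y,2r)^p$.

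For that second integral, apply the Choquet layer-cake to rewrite it as $p\int_0^1 t^{p-1}\, \hdc(V_t)\, dt$, where $V_t := \{z \in E_1 \cap B(x,r) : \dist(\pi(z),L) > tr\}$. Decompose $V_t = V_t^{\mathrm{far}} \cup V_t^{\mathrm{close}}$ according as $\dist(z,E_2) > tr/2$ or $\leq tr/2$. The far part, reintegrated in $t$, is another contribution to the error term. For the close part, each $z$ lies within $tr/2$ of a point $w := \pi(z) \in E_2 \cap B(y,4r)$ with $\dist(w,L) > tr$; apply the $5r$-covering lemma to the family $\{B(\pi(z), tr/100) : z \in V_t^{\mathrm{close}}\}$ to extract a pairwise disjoint subfamily $\{B(w_i, tr/100)\}$ whose $5$-dilations cover $\pi(V_t^{\mathrm{close}})$. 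Then $V_t^{\mathrm{close}} \subseteq \bigcup_i B(w_i, tr)$, so by subadditivity $\hdc(V_t^{\mathrm{close}}) \lesssim \sum_i (tr)^d$. Lower content regularity of $E_2$ gives $(tr)^d \lesssim_c \hdc(E_2 \cap B(w_i, tr/100))$, and since the $B(w_i, tr/100)$ are pairwise disjoint subsets of $A_t := \{w \in E_2 \cap B(y,4r) : \dist(w,L) > tr/2\}$, a near-additivity argument for Hausdorff content on disjoint balls centred in the LCR set $E_2$ yields $\sum_i \hdc(E_2 \cap B(w_i, tr/100)) \lesssim \hdc(A_t)$. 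Integrating in $t$, undoing the layer cake, and using the monotonicity of $\overline{\beta}^{d,p}$ to replace $B(y,4r)$ by $B(y,2r)$ completes the bound.

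The main obstacle is precisely this near-additivity step. Hausdorff content is only countably subadditive, so $\sum_i \hdc(A_i) \lesssim \hdc(\bigcup_i A_i)$ is \emph{not} automatic for pairwise disjoint $A_i$. To handle it, one argues with admissible covers directly: for any cover of $A_t$ by balls $\{D_j\}$, a ball $D_j$ of radius $r_{D_j} < tr/300$ can intersect at most one $B(w_i, tr/100)$, so the subfamily of such $D_j$ meeting a fixed $B(w_i, tr/100)$ still covers $E_2 \cap B(w_i, tr/100)$ and hence contributes $\gtrsim_c (tr)^d$ to $\sum r_{D_j}^d$ by lower content regularity; whereas any $D_j$ of radius $\geq tr/300$ already contributes $r_{D_j}^d \gtrsim (tr)^d$ per $B(w_i, tr/100)$ it meets. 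Summing these contributions and taking the infimum over covers delivers the desired inequality with a constant depending on $c$ and $d$. The remaining steps, namely passing from $B(y,4r)$ to $B(y,2r)$ via the monotonicity of $\overline{\beta}^{d,p}$ and tracking the multiplicative constants in $c, p, d$, are routine.
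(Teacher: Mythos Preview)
The paper does not prove this lemma; it is quoted from \cite{azzam2018analyst}. Evaluating your argument on its own merits, the strategy is correct up to and including the layer-cake split into $V_t^{\mathrm{far}}$ and $V_t^{\mathrm{close}}$, but the ``near-additivity'' step has a genuine gap.

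The problematic sentence is that any $D_j$ of radius $\geq tr/300$ ``contributes $r_{D_j}^d \gtrsim (tr)^d$ per $B(w_i, tr/100)$ it meets.'' A single $D_j$ contributes $r_{D_j}^d$ exactly once to $\sum_j r_{D_j}^d$; if it meets $N$ of the Vitali balls you would need $r_{D_j}^d \gtrsim N(tr)^d$. But $E_2$ is only \emph{lower} content regular, so a ball of radius $R \gg tr$ can contain $\sim (R/tr)^n$ of the $tr/50$-separated centres $w_i$, while $r_{D_j}^d/(tr)^d = (R/tr)^d \ll (R/tr)^n$. Hence the accounting fails. In fact the inequality $\sum_i (tr)^d \lesssim_c \hdc(A_t)$ you are trying to prove is simply false: take $d=1$, $n=2$, $E_1=E_2$ the union of $M$ horizontal lines at heights $0,1/M,\dots,(M-1)/M$, $L=\{y=0\}$, $r=1$. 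For fixed $t\in(0,1)$ one finds $\sim M/t$ disjoint Vitali balls $B(w_i,t/100)$ with $w_i$ on the lines above height $t$, so $\sum_i(tr)^d\sim M$; yet $A_t$ lies in a ball of diameter $\lesssim 1$, so $\hdc^1(A_t)\lesssim 1$. (Note that in this example the \emph{lemma} still holds: both $\overline\beta$-integrals are $\sim 1$; it is your intermediate estimate that breaks.)

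The fix is not to estimate $\hdc(V_t^{\mathrm{close}})$ via the crude bound $N(tr)^d$. One clean route is to take a near-optimal cover $\{D_j\}$ of $E_{2,t/4}\cap B(y,Cr)$ and observe that the dilated balls $\{D_j+tr\}$ cover $V_t^{\mathrm{close}}$; the enlargement costs only a constant once every $D_j$ has $r_{D_j}\gtrsim tr$, and lower content regularity of $E_2$ is exactly what lets you pass to such a cover without losing control of $\sum r_{D_j}^d$ (each point of $E_{2,t/4}$ has a full $tr$-ball of $E_2$ around it still inside $E_{2,t/8}$). The small-ball/large-ball dichotomy is the right instinct, but the bookkeeping must compare $\sum r_{D_j}^d$ to the content of a \emph{slightly fattened} level set rather than to $\hdc(A_t)$ at the same $t$.
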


\begin{lemma}[{\cite[Lemma 2.25]{hyde2020TST}}]\label{l:error2}
Let $F \subset \R^n$ be a lower content $(d, c)$-regular set. Suppose that $E \subset F$. Let $B$ be a ball and $L$ a $d$-dimensional plane. Then we have
\begin{align*}
    \beta_{E}^{d,p} (B, L) \lesssim_{c, d, p} \beta_{F}^{d, p} (2B, L) +\left(\frac{1}{r_B^d} \int_{F \cap 2B} \left(\frac{\dist(x, E)}{r_B}\right)^{p} \, d \hd_\infty(x) \right)^{\frac{1}{p}}.
\end{align*}
\end{lemma}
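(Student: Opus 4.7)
The plan is to compare the modified content $\mathcal{M}^d_\infty$ appearing in the definition of $\beta_E^{d,p}$ to the ordinary Hausdorff content $\hd_\infty$ restricted to $F \cap 2B$, at the cost of an $L^p$ error measuring the distance from $F$ to $E$. The lower content regularity of $F$ is what makes such a comparison possible: it guarantees that a near-optimal $\hd_\infty$-cover of $F \cap 2B$, centred on $F$, is automatically a good cover of $E \cap B$ in the sense of Definition \ref{IntroDef}, up to a mild augmentation by small balls centred on $E$. Since $E \subseteq F$, the main geometric input is just the triangle inequality $\dist(x,L) \leq \dist(y,L) + |x-y|$, which governs how the distance to $L$ transports from $F$ to $E$ along a cover ball.

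To implement this, I would first pass to a layer-cake representation
\[ r_B^{d+p}\,\beta_E^{d,p}(B,L)^p \;=\; p \int_0^{\infty} t^{p-1}\, \mathcal{M}^d_\infty\bigl(\{x \in E \cap B : \dist(x,L) > t\}\bigr)\, dt, \]
reducing the task to a pointwise bound on each level set. With parameter $\sigma = t/2$, the central estimate I would prove is
\[ \mathcal{M}^d_\infty\bigl(\{x \in E \cap B : \dist(x,L) > t\}\bigr) \;\lesssim_{c,d}\; \hd_\infty\bigl(\{y \in F \cap 2B : \dist(y,L) > t/2\}\bigr) \;+\; \hd_\infty\bigl(\{y \in F \cap 2B : \dist(y,E) > \sigma\}\bigr). \]
To prove it, take a near-optimal $\hd_\infty$-cover $\mathscr{B}$ of $F \cap 2B$ by balls of radius $<\sigma$ centred on $F$; lower content regularity of $F$ together with $E \subseteq F$ ensures that, after adjoining tiny balls centred on any uncovered points of $E$, the collection satisfies both (\ref{LRi}) and (\ref{URi}) for $E \cap B$. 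Each $B' \in \mathscr{B}$ meeting $\{x \in E \cap B : \dist(x,L) > t\}$ then falls into one of two cases: its centre $y_{B'} \in F$ lies within $\sigma$ of $E$, in which case the triangle inequality forces $\dist(y_{B'}, L) > t/2$; or $B' \subseteq \{\dist(\cdot, E) > \sigma\}$. Summing $r_{B'}^d$ over each family gives the displayed bound.

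Integrating the central estimate in $t$ against $t^{p-1}$, the first term becomes a constant multiple of $r_B^{d+p}\,\overline{\beta}_F^{d,p}(2B,L)^p$, which is dominated by $r_B^{d+p}\,\beta_F^{d,p}(2B,L)^p$ via the easy direction of Lemma \ref{l:comparable_beta} (precisely because $F$ is $(d,c)$-lower content regular), while the second term becomes a constant multiple of $\int_{F \cap 2B} \dist(y,E)^p \, d\hd_\infty(y)$. Dividing by $r_B^{d+p}$ and taking $p$-th roots yields the lemma. The main obstacle is the content comparison step: one must produce from $\mathscr{B}$ a collection that genuinely satisfies (\ref{LRi}) and (\ref{URi}) as a cover of $E \cap B$. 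Lower regularity (\ref{LRi}) is inherited from $F$ fairly directly via $E \subseteq F$, but upper regularity (\ref{URi}) requires a Vitali-type pruning of $\mathscr{B}$, and it is precisely here that passing from $B$ to $2B$ becomes necessary, to accommodate cover balls of radius comparable to $r_B$ that may protrude from $B$.
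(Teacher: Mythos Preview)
The paper does not prove this lemma; it is quoted verbatim from \cite{hyde2020TST} with no argument given here, so there is no in-paper proof to compare against. Your layer-cake strategy is a natural one, but the proof sketch has a real gap at the step where you claim your cover $\mathscr{B}$ is good for $E\cap B$.

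The problematic point is \eqref{LRi}. That condition sums only over balls $B'\in\mathscr{B}$ meeting $B(x,r)\cap E\cap B$, not $B(x,r)\cap F\cap 2B$. If $E$ is extremely sparse in $F$ --- say $E\cap B=\{x_0\}$ --- then a fine cover of $F\cap 2B$ by balls of radius $<\sigma$ will have exactly one ball through $x_0$, contributing at most $\sigma^d$ to the sum in \eqref{LRi}; for $r$ close to $r_B$ this is far below $c_1 r^d$. Lower content regularity of $F$ tells you many cover balls meet $F\cap B(x,r)$, but almost none of them need touch $E$, so the condition fails. ``Adjoining tiny balls centred on $E$'' does not repair this: tiny balls make \eqref{LRi} worse, not better. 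This is exactly the phenomenon the modified content $\mathcal{M}^d_\infty$ is designed to capture (cf.\ Remark~\ref{r:betas}), and it cannot be sidestepped by borrowing a Hausdorff-content cover from $F$.

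There is a second, related issue with your dichotomy. Any ball $B'$ of radius $<\sigma$ that actually meets $A_t=\{x\in E\cap B:\dist(x,L)>t\}\subseteq E$ has centre within $r_{B'}<\sigma$ of $E$, so your ``$B'\subseteq\{\dist(\cdot,E)>\sigma\}$'' alternative is vacuous for the balls that matter. Consequently the error term $\int_{F\cap 2B}(\dist(y,E)/r_B)^p\,d\hd_\infty$ never appears in your argument, which would yield $\beta_E^{d,p}(B,L)\lesssim\beta_F^{d,p}(2B,L)$ with no error --- and that is false in general (take $E$ a single point on a plane $F$ and $L$ a nearby plane tilted so that $\overline\beta_F$ is tiny while $\mathcal{M}^d_\infty(\{x_0\})\sim r_B^d$ forces $\beta_E$ to be of order $\dist(x_0,L)/r_B$). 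A correct argument must build the good cover around $E$ itself --- with balls whose radii reflect the local sparsity of $E$ inside $F$ --- and it is precisely those ``large'' balls, where $E$ is far from filling $F$, that generate the error term.
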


\section{Fast decay of $\beta$ coefficients implies existence of paraboloid points} \label{s:dini-tangent}

In this section we will show the first part of Theorem \ref{t:main2}. In particular, we show that up to a set of zero $\hd$-measure, if 
\begin{align*}
    \int_0^{\diam(E)} \frac{\beta_{E}^{d,p}(x,r)^2}{r^{2\alpha}} \, \frac{dr}{r} < + \infty,
\end{align*}
then $x$ is an $\alpha$-paraboloid point (or cone point when $\alpha=0$). We actually show that $\mathcal{H}^d$-almost every $x$ satisfying the above is an $\alpha$-\textit{tangent} point, meaning  
\begin{align}\label{e:alpha-tangent}
    \lim_{r \to 0} \sup_{y \in B(x,r) \cap E} \frac{\dist(y, L)}{r^{1+\alpha}} = 0.
\end{align}
However, we have the following: 

\begin{lemma}\label{l:tangent-cone}
Let $\alpha \in [0, 1)$. Let $x \in E$ be a $\alpha$-tangent point. Then $x$ is an $\alpha$-paraboloid point \textup{(}or cone point when $\alpha=0$\textup{)}. 
\end{lemma}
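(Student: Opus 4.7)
The plan is to take $V = L$, where $L$ is the tangent plane from the definition of $\alpha$-tangent point, and verify the paraboloid condition directly from the $\alpha$-tangent decay. First, I observe that $L$ must pass through $x$: since $x \in E$ itself is a valid test point in the supremum defining \eqref{e:alpha-tangent}, we have $\dist(x,L)/r^{1+\alpha} \to 0$ as $r \to 0$, which forces $\dist(x,L)=0$. Consequently, $\dist(y,L) = |\Pi_{L^\perp}(x-y)|$ for every $y \in \R^n$.

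Next, I would convert the $r$-indexed tangent condition into a pointwise bound in $|x-y|$. Fix $\delta > 0$; by \eqref{e:alpha-tangent}, there is $r_\delta > 0$ such that for all $0 < r \leq r_\delta$ and all $y \in E \cap B(x,r)$,
\[
|\Pi_{L^\perp}(x-y)| \leq \delta r^{1+\alpha}.
\]
For any fixed $y \in E \cap B(x, r_\delta)\setminus\{x\}$, letting $r$ decrease to $|x-y|$ yields
\begin{equation}\label{eq:tangent-pointwise}
|\Pi_{L^\perp}(x-y)| \leq \delta\, |x-y|^{1+\alpha}.
\end{equation}

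With \eqref{eq:tangent-pointwise} in hand, I would use the orthogonal decomposition $|x-y|^2 = |\Pi_L(x-y)|^2 + |\Pi_{L^\perp}(x-y)|^2$ to control $|\Pi_L(x-y)|$ from below. Indeed, \eqref{eq:tangent-pointwise} gives
\[
|\Pi_L(x-y)|^2 \geq |x-y|^2\bigl(1 - \delta^2 |x-y|^{2\alpha}\bigr),
\]
so if we further require $r_\delta$ small enough that $\delta^2 r_\delta^{2\alpha} \leq 1/2$, then $|\Pi_L(x-y)| \geq |x-y|/\sqrt{2}$ for every such $y$.

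Finally, given $\lambda > 0$, set $\delta = \lambda \cdot 2^{-(1+\alpha)/2}$ (and shrink $r_\delta$ as above). Then for every $y \in E \cap B(x,r_\delta)\setminus\{x\}$,
\[
|\Pi_{L^\perp}(x-y)| \leq \delta |x-y|^{1+\alpha} \leq \delta \cdot 2^{(1+\alpha)/2}\, |\Pi_L(x-y)|^{1+\alpha} = \lambda\, |\Pi_L(x-y)|^{1+\alpha},
\]
which means $y \in X_\alpha(x,L,\lambda,r_\delta)$, and hence $E \cap X_\alpha^c(x,L,\lambda,r_\delta) = \emptyset$. The case $\alpha = 0$ is the same argument with all $|x-y|^{1+\alpha}$ replaced by $|x-y|$, and yields that $x$ is a cone point with respect to $V = L$. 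There is no substantive obstacle here; the only care needed is passing from the $r$-sup bound to the pointwise bound \eqref{eq:tangent-pointwise}, which is harmless since $r$ may be taken arbitrarily close to $|x-y|$.
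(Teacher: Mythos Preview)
Your proof is correct and follows essentially the same idea as the paper: both show that the tangent plane $L$ from the $\alpha$-tangent condition serves as the axis of the paraboloid, with the perpendicular-component decay forcing the paraboloid inclusion. The paper argues by contradiction and leaves the Pythagorean step implicit (``One can easily see\ldots''), whereas you give a clean direct argument that makes this step explicit; your version is in fact more carefully written than the paper's own proof.
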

\begin{proof}
Let $L_x= L+x$ be the $d$-plane in the definition of $\alpha$-tangent point. 
If the lemma did not hold, for each $d$-dimensional plane $V \in G(d, n)$ we could find a $\theta \in (0,\pi/2)$ such that for all $r>0$, there existed a $y \in B(x,r) \cap E \setminus X_\alpha(x, V, \theta)$; such a $y$ would have $\dist(x-y, V) \geq \sin(\theta)|x-y|^{1+\alpha}$. One can easily see that this applied to $L$ implies that $\limsup_{r \downarrow 0} \sup_{y \in B(x,r)\cap E} \dist(y,L_x)/r \geq \sin(\theta)$. 
\end{proof}

Then, the first part of Theorem \ref{t:main2} will follow from the proposition below (see Corollary \ref{corol:final2}). 

\begin{proposition} \label{theorem:summabletangent}
Let $E \subset \R^{n}$ be closed; assume moreover that $E \subset B(0,1)$. For $k \in \Z$ set $r_k = 10^{-k}$. Let $1 \leq p < \infty$. Then $\hd$-almost every point $x \in E$ such that
\begin{align*}
\sum_{k \in \N} \frac{\beta_E^{p, d} (x, r_k)^2}{r_k^{2\alpha}} < + \infty,
\end{align*}
is an $\alpha$-tangent point of $E$.
\end{proposition}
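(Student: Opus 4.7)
The plan is to decompose the set of summability points into pieces lying on $C^{1,\alpha}$-surfaces produced by Theorem \ref{l:azzam}, and then transfer the $\alpha$-tangent property from these surfaces to $E$. Fix a small threshold $\varepsilon > 0$ and construct Christ-David cubes $\mathcal{D}$ for $E$ (Theorem \ref{theorem:christ}). Call a cube $Q \in \mathcal{D}$ \emph{good} if $E \cap B_Q$ contains a $(d+1)$-tuple of $\delta \ell(Q)$-separated points — so that the best approximating $d$-plane $L_Q$ is quantitatively stable (the role of Lemma \ref{l:angpre} in the sketch) — and if $\beta_E^{d,p}(B_Q)/\ell(Q)^\alpha$ is below a small constant depending on $\varepsilon$. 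From each maximal good cube $R$, build a stopping-time region $S(R)$ by descending through good descendants as long as $\sum_{Q \subseteq Q' \subseteq R}\varepsilon(Q')^2/\ell(Q')^{2\alpha}$, with $\varepsilon(Q')$ as in \eqref{e:azzam2}, remains below $\varepsilon^2$.

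The quantitative stability of $L_Q$ lets one bound $\varepsilon(Q)$ by $\beta_{E,\infty}^d$ on a slightly enlarged ball, and Lemma \ref{lemma:betap_betainfty} converts this into a bound in terms of $\beta_E^{d,p}$; hence hypothesis \eqref{e:azzam3.5} of Theorem \ref{l:azzam} is satisfied on each $S(R)$. The theorem then produces a $C^{1,\alpha}$-surface $\Sigma_{S(R)}$ satisfying the one-sided approximation
\[ \sup_{z \in E \cap C_1 B_Q}\dist(z,\Sigma_{S(R)}) \lesssim \varepsilon \, \ell(Q) \quad \text{for every } Q \in S(R). \]

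Next, for $\hd$-almost every $x$ satisfying the summability hypothesis, I would exhibit a top cube $R(x)$ with $x \in \Sigma_{S(R(x))}$. The finiteness of the sum forces the $\beta$-tail at $x$ below $\varepsilon^2$ beyond some scale $k(x)$, so both the smallness of $\beta_E^{d,p}(B_Q)/\ell(Q)^\alpha$ and the cumulative-sum condition hold along every cube $Q \ni x$ at scales $\leq r_{k(x)}$. The subtle point is the separated-points condition: if it fails cofinally at $x$, then $\mathcal{M}^d_\infty(E \cap B_Q)$ must concentrate in a neighbourhood of an affine subspace of dimension at most $d-1$, and a standard Vitali-type covering argument together with Definition \ref{IntroDef} shows that such exceptional $x$ form an $\hd$-null set.

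Once $x \in \Sigma_{S(R(x))}$, the classical $C^{1,\alpha}$-regularity of $\Sigma_{S(R(x))}$ provides a tangent $d$-plane $L_x$ at $x$ with $\sup_{y \in \Sigma_{S(R(x))} \cap B(x,r)}\dist(y, L_x) = O(r^{1+\alpha})$. Combining this with the surface-approximation estimate above yields, for $y \in E \cap B(x,r)$, a bound of the form $\dist(y, L_x) \lesssim \varepsilon r + O(r^{1+\alpha})$, which a priori is only $O(r)$. To upgrade this to the little-$o$ decay \eqref{e:alpha-tangent}, the idea is to use the fact that the summable tail of $\beta_E^{d,p}(x,r_k)^2/r_k^{2\alpha}$ vanishes at $x$, so that the \emph{effective} ambient parameter along the cubes $Q \ni x$ can be replaced by a local quantity $\varepsilon(x,\ell(Q))$ that tends to $0$ as $\ell(Q) \to 0$. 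Making this scale-sensitive refinement of \eqref{e:azzam4} rigorous — effectively absorbing the multiplicative $\varepsilon r$ error into $o(r^{1+\alpha})$ by choosing a nested sequence of stopping-time regions with $\varepsilon_j \downarrow 0$ and top cubes shrinking to $x$ — is the main technical obstacle of the proof.
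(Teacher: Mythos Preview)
Your outline tracks the paper's strategy (stopping-time regions, Theorem \ref{l:azzam}, then transfer), but there are two genuine gaps.

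\textbf{The transfer step.} You correctly observe that \eqref{e:azzam4} only gives $\dist(z,\Sigma)\lesssim\varepsilon\,\ell(Q)$, so combining with the $C^{1,\alpha}$ tangent of $\Sigma$ yields $\dist(z,L_x)\lesssim\varepsilon r+O(r^{1+\alpha})$, which is not $o(r^{1+\alpha})$. Your proposed fix---nesting stopping-time regions with $\varepsilon_j\downarrow 0$---does not close the gap: to absorb $\varepsilon_j r$ into $o(r^{1+\alpha})$ you would need $\varepsilon_j=o(\ell(R_j)^\alpha)$, but the tail of a convergent series decays at no prescribed rate, and moreover the tangent planes of the different $\Sigma_j$ at $x$ need not agree. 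The paper does \emph{not} refine the surface approximation. Instead it uses $\Sigma$ only to locate $d+1$ linearly independent anchor points $x_0,\dots,x_d\in E_0\cap\Sigma\cap B(x_0,2r)$ (via Lebesgue density of $g^{-1}(E_0^N\cap Q)$ at $p_0=g^{-1}(x_0)$); these lie within $\tau r^{1+\alpha}$ of the tangent plane $L_0$ of $\Sigma$ because they are on $\Sigma$. Then the summability hypothesis at $x_0$ is invoked \emph{directly}: via Lemma \ref{lemma:betap_betainfty} it gives $r^{-\alpha}\beta_{E,\infty}^d(x_0,r)\lesssim\tau$, so there is a $d$-plane $V$ within $C\tau r^{1+\alpha}$ of \emph{all} of $E\cap B(x_0,r)$. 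Since $V$ must pass near the anchors, $V$ is close to $L_0$, and hence every $z\in E\cap B(x_0,r)$ satisfies $\dist(z,L_0)\lesssim\tau r^{1+\alpha}$. The estimate \eqref{e:azzam4} is never used at this stage.

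\textbf{The separated-points exceptional set.} Saying that ``$\mathcal{M}^d_\infty(E\cap B_Q)$ must concentrate near a $(d-1)$-plane'' is at odds with Definition \ref{IntroDef}, which is engineered precisely so that $\mathcal{M}^d_\infty$ is uniformly large on every ball centred on $E$; no Vitali argument extracts an $\hd$-null set from this alone. The paper's mechanism is structural: a cube lacking $(d+1,\kappa)$-separated points has $E\cap C_1B_Q$ inside a $\kappa\ell(Q)$-neighbourhood of a $(d-1)$-plane, and by passing to grandchildren $k^*$ generations down one gains a factor $C\rho^{k^*}<\tfrac12$ in total $d$-dimensional volume (Lemma \ref{l:sumB}). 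This yields the packing $\sum_{Q\in\Top}\ell(Q)^d\lesssim\ell(Q_0)^d$, from which $\hd(E_0^\infty)=0$ follows; the remaining points then lie on some $\Sigma_Q$ by Lemma \ref{l:find-stopping}.

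A minor point: bounding $\varepsilon(Q)$ by $\beta_{E,\infty}^d$ and then invoking Lemma \ref{lemma:betap_betainfty} loses a power $1/(d+1)$, which destroys the summability needed for \eqref{e:azzam3.5}. Lemma \ref{l:angpre} already gives $\varepsilon(Q)\lesssim\beta_E^{d,1}(M^2B_Q)$ directly, with no loss; this is how the paper proceeds (see \eqref{e:epsilon-T}).
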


\begin{remark}
Because of Lemma \ref{lemma:betaincreasep}, it is enough to prove Proposition \ref{theorem:summabletangent} for $p=1$. 
We may assume that the set
\begin{align*}
  G = \left\{ x \in E\, |\, \sum_{k \in \N} \frac{\betae{1} (x, r_k)^2}{r^{2\alpha}} < + \infty \right\}
\end{align*}
has non-zero $\hd$ measure, for otherwise there is nothing to prove. 
\end{remark}

\begin{lemma} \label{lemma:reduction1}
It suffices to show that if $A \subseteq G$ is compact and $0 < \hd(A) < \infty$, then $\hd$-almost every $x \in A$ is a tangent point of $E$.
\end{lemma}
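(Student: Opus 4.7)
The plan is to argue by contradiction using inner regularity of Hausdorff measure. First, I observe that $G$ is measurable: writing $G = \bigcup_{N \in \N} G_N$ where
\[
G_N := \left\{ x \in E \,:\, \sum_{k \in \N} \frac{\betae{1}(x, r_k)^2}{r_k^{2\alpha}} \leq N \right\},
\]
each $G_N$ is Borel because, for fixed $k$, the function $x \mapsto \betae{1}(x, r_k)$ is Borel measurable (indeed upper semi-continuous), being an infimum over the compact Grassmannian $G(d,n)$ of functions jointly continuous in $x$ and the plane. A similar argument, combined with the compactness of $G(d,n)$ to render the existential quantifier over planes effectively countable (one takes a countable dense family and exploits continuity to pass to the limit), shows that the set $T \subseteq E$ of $\alpha$-tangent points of $E$ is Borel; hence so is $G \setminus T$.

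Next, I would suppose for contradiction that the conclusion of Proposition \ref{theorem:summabletangent} fails, that is, $\hd(G \setminus T) > 0$. Standard inner regularity of $\hd$ on Borel subsets of $\R^n$ (see e.g. \cite[Theorem 8.13]{mattila}) then produces a compact set $A \subseteq G \setminus T$ with $0 < \hd(A) < \infty$. Since $A \subseteq G$ is compact with positive finite $\hd$-measure, the assumed hypothesis of the lemma yields that $\hd$-almost every $x \in A$ is an $\alpha$-tangent point of $E$. But $A \subseteq G \setminus T$ contains no tangent point at all, so this forces $\hd(A) = 0$, contradicting $\hd(A) > 0$.

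The only step requiring real care is the measurability of the tangent set $T$; this is a routine compactness-and-countability argument but is the one point of the proof where one has to check something nontrivial. In any event, even if one only verifies that $T$ is Suslin (rather than Borel), inner regularity of Hausdorff measure on Suslin sets still supplies the compact subset $A$ with the required properties, and the contradiction argument then proceeds verbatim. The remainder of the deduction is the standard exhaustion/inner regularity scheme and requires no further ingredients.
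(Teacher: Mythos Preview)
Your argument is correct and follows essentially the same route as the paper: assume the conclusion fails on a set of positive $\hd$-measure, then invoke \cite[Theorem~8.13]{mattila} to extract a compact subset of positive finite measure and obtain a contradiction. The only difference is that you take more care with the measurability of $G$ and of the tangent set $T$, whereas the paper simply posits a subset $\Omega$ of non-tangent points with $\hd(\Omega)>0$ and applies Theorem~8.13 directly without discussing measurability.
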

\begin{proof}
Suppose that for any $A \subseteq G$ with $0 < \hd(A) < \infty$ we have that $\hd$-almost every $x \in A$ is a tangent point of $E$. We claim, for any $A' \subseteq G$ (with no assumption on the measure), that $\hd$-almost every $x \in A'$ is a tangent point of $E$.

We argue by contradiction. Suppose that there exists a subset $\Omega \subset A'$ with $0 < \hd(\Omega)$ and so that no $x \in \Omega$ is a tangent point of $E$. By Theorem 8.13 in \cite{mattila}, there exists a compact subset $K \subset \Omega$ with $0< \hd(K) < \infty$ and so that no $x \in K$ is a tangent point of $E$. This leads to a contradiction and the lemma follows.
\end{proof}

\begin{remark} \label{remark:e0}
Let now $A \subseteq G$ be an arbitrary compact subset such that $0< \hd(A) < \infty$. Again, by Theorem 8.13 in \cite{mattila}, such a set exists. 
Let $\epsilon >0 $ be given. For $\ell \in \N$, define
\begin{align*}
    E_\ell:= \Bigg\{ x \in A\, |\, \sum_{k \geq \ell}^\infty \frac{\betae{1}(x, r_k)^2}{r^{2\alpha}} < \epsilon^2\Bigg\}.
\end{align*}
For an arbitrary $\ell \in \N$ with $\hd(E_\ell) >0$, we now fix 
\begin{align} \label{eq:e_0}
    E_0 \subset E_\ell \subset A 
\end{align}
to be a compact subset of positive and finite $\hd$-measure. If we prove Proposition \ref{theorem:summabletangent} for $E_0$, then the statement in its full generality follows by first saturating $E_\ell$ with compact subsets, and then by taking the countable union $\cup_\ell E_\ell = A$. Moreover, by applying a dilation and a translation, we may assume that 
\begin{align} \label{eq:assumptioscaleplace}
E_0 \subset B(0, 1), \mbox{ with } 0 \in E_0,
\end{align}
and that $\ell = 1$ in \eqref{eq:e_0}. It follows that
\begin{align}
    \frac{\betae{1}(x, r_k)}{r^{\alpha}} < \epsilon \mbox{ for } x \in E_0 \mbox{ and for } k \geq 1.
\end{align}
\end{remark}

Let $\alpha \in [0,1).$ From this point on, we will refer to an $\alpha$-tangent point as a tangent point. Let $\mathcal{D}$ denote the Christ cubes for $E$ from Theorem \ref{theorem:christ}. We first make precise the separation condition mentioned in the outline.

\begin{definition}\label{l:ep}
Let $ 0 <\kappa <1.$ We say a ball $B$ has $(d+1,\kappa)$-separated points if there exist points $X = \{x_0,\dots,x_d\}$ in $E \cap B$ such that, for each $i=1,\dots,d,$ we have
\begin{align}\label{e:sepdef}
\text{dist}(x_{i+1},\text{span}\{x_0,\dots,x_i\}) \geq \kappa r_{B}.
\end{align}
We say that a cube $Q \in \mathcal{D}$ has $(d+1,\kappa)$-\textit{separated points} if $C_1B_Q$ has $(d+1,\kappa)$-separated points. 
\end{definition}

\begin{remark}

Because we want to control angles between planes that well approximate $E$, we require the separation in $E$ not necessarily in $E_0.$ 
\end{remark}

\begin{lemma}[{\cite[Lemma 2.32]{hyde2020TST}}]\label{l:angpre}
Let $E \subseteq \R^n$ and let $B^\prime$ and $B$ be balls centred on $E$ with $B^\prime \subseteq B.$ Suppose further that there exists $0 <\kappa <1$ such that $B^\prime$ has $(d+1,\kappa)$-separated points. Let $L$ and $L^\prime$ be two $d$-planes. Then
\begin{align*}
d_{B^\prime}(L,L^\prime) \lec  \frac{1}{\kappa^{2d+2}}\left[\left( \frac{r_B}{r_{B^\prime}}\right)^{d+1} \beta_E^{d,1}(2B,L) + \beta_E^{d,1}(2B^\prime,L^\prime) \right].
\end{align*}
\end{lemma}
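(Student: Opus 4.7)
The plan is to locate, near each of the $d+1$ separated points $x_0,\dots,x_d \in E \cap B'$, an auxiliary point $y_i \in E$ that lies close to both $L$ and $L'$, and then deduce that $L$ and $L'$ are Hausdorff-close on $B'$ by comparing each of them to the affine $d$-plane $L_Y$ spanned by the $y_i$'s.

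For the first step, I would rewrite $\beta^{d,1}_E$ in level-set form and apply Chebyshev: for any $d$-plane $M$ and ball $B_0$,
\begin{align*}
\mathcal{M}^d_\infty\bigl(\{y \in E \cap B_0 : \dist(y,M) > t\}\bigr) \leq \frac{r_{B_0}^{d+1}}{t}\, \beta_E^{d,1}(B_0, M).
\end{align*}
I apply this once with $(B_0,M) = (2B,L)$ and once with $(B_0,M) = (2B',L')$, using thresholds
\begin{align*}
t_1 \,\sim\, \tfrac{r_B^{d+1}}{\kappa^d r_{B'}^d}\, \beta_E^{d,1}(2B,L), \qquad t_2 \,\sim\, \tfrac{r_{B'}}{\kappa^d}\, \beta_E^{d,1}(2B',L').
\end{align*}
These choices make each bad set small relative to the lower-regularity estimate $\mathcal{M}^d_\infty(E \cap B(x_i, c\kappa r_{B'})) \gtrsim (\kappa r_{B'})^d$, which is property (LRi) from Definition \ref{IntroDef}. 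A minor point is that $\beta^{d,1}_E(2B,L)$ and $\beta^{d,1}_E(2B',L')$ are defined via the different contents $\mathcal{M}^d_\infty(\cdot,E,2B)$ and $\mathcal{M}^d_\infty(\cdot,E,2B')$; this is handled by the observation that any cover good for $(E,2B)$ is automatically good for $(E,2B')$, so $\mathcal{M}^d_\infty(A,E,2B') \leq \mathcal{M}^d_\infty(A,E,2B)$ for $A \subseteq 2B'$, which lets both Chebyshev estimates be combined on the same small ball $B(x_i, c\kappa r_{B'})$. One thus obtains points $y_i \in E \cap B(x_i, c\kappa r_{B'})$ with $\dist(y_i,L)\leq t_1$ and $\dist(y_i,L')\leq t_2$, and for $c$ a small absolute constant the $y_i$'s still satisfy a $(d+1,\kappa/2)$-separation property inherited from the $x_i$'s.

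For the second step, let $L_Y$ be the affine $d$-plane through $y_0,\ldots,y_d$, set $v_i = y_i - y_0$, and let $w_i$ denote the orthogonal projection of $v_i$ onto the direction of $L$. Then $|v_i - w_i| \leq 2t_1$, $|v_i| \lesssim r_{B'}$, and the separation of the $y_i$'s gives $|v_1 \wedge \cdots \wedge v_d| \geq (\kappa r_{B'}/2)^d$. Expanding $v_1\wedge\cdots\wedge v_d - w_1\wedge\cdots\wedge w_d$ by multilinearity into $d$ telescoping terms yields the Plucker-style bound
\begin{align*}
\sin \angle(L_Y, L) \,\lesssim\, \frac{r_{B'}^{d-1}\, t_1}{(\kappa r_{B'})^d} \,\lesssim\, \frac{t_1}{\kappa^d\, r_{B'}},
\end{align*}
while the translational offset $\dist(y_0,L) \leq t_1$ contributes at most $t_1/r_{B'}$ to the Hausdorff distance, which is subdominant. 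Substituting the value of $t_1$ gives $d_{B'}(L, L_Y) \lesssim \kappa^{-2d}(r_B/r_{B'})^{d+1}\,\beta_E^{d,1}(2B,L)$, and an analogous computation with $L'$ produces $d_{B'}(L_Y, L') \lesssim \kappa^{-2d}\,\beta_E^{d,1}(2B',L')$. The triangle inequality $d_{B'}(L,L') \leq d_{B'}(L,L_Y) + d_{B'}(L_Y,L')$ then delivers the claimed bound; the lemma's $\kappa^{-(2d+2)}$ is a little more generous than the $\kappa^{-2d}$ that this bookkeeping actually produces, which provides slack for safer constants. The main obstacle is exactly this: patient tracking of all powers of $\kappa$ and $r_B/r_{B'}$ through the two Chebyshev steps and the Grassmannian angle estimate. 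The content-comparison observation in the previous paragraph is the only non-arithmetic subtlety.
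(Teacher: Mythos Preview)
The paper does not give a proof of this lemma; it is simply quoted from \cite[Lemma 2.32]{hyde2020TST}. So there is no ``paper's proof'' to compare against, only the original argument in that reference.

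Your overall strategy is the correct one and matches the standard proof: locate $d+1$ well-separated points of $E$ that are simultaneously close to $L$ and to $L'$, then run the wedge-product/telescoping angle estimate to compare each of $L,L'$ to the plane $L_Y$ they span. The bookkeeping in your Step~2 is fine, and your observation that the argument actually yields $\kappa^{-2d}$ rather than the stated $\kappa^{-2d-2}$ is also correct.

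There is, however, a genuine issue in Step~1. Your claim that ``any cover good for $(E,2B)$ is automatically good for $(E,2B')$'' is not correct as stated: the lower-regularity condition \eqref{LRi} for the ambient ball $2B'$ asks about balls of the cover meeting $B(x,r)\cap E\cap 2B'$, whereas \eqref{LRi} for $2B$ controls balls meeting the larger set $B(x,r)\cap E\cap 2B$. When $B(x,r)$ sticks out of $2B'$ the latter sum can be large while the former is not, so the implication fails. Consequently the inequality $\mathcal{M}^d_\infty(A,E,2B') \leq \mathcal{M}^d_\infty(A,E,2B)$ is not justified, and you cannot immediately run both Chebyshev bounds against the same content. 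Note also that $\mathcal{M}^d_\infty$ is an infimum over covers that must each satisfy \eqref{LRi}--\eqref{URi}, so neither subadditivity nor cover-combination arguments are automatic here.

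This gap is repairable --- the argument in \cite{hyde2020TST} handles exactly this point, using further properties of $\mathcal{M}^d_\infty$ developed there --- but your sketch glosses over it with an incorrect justification. You correctly flag the content comparison as ``the only non-arithmetic subtlety''; it deserves a real argument rather than the one-line claim you give.
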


\subsection{Stopping-time regions} We now define the stopping-time regions for which we can apply Theorem \ref{l:azzam}. First, let $\mathscr{B}$ be the set of cubes $Q$ in $\mathcal{D}$ so that $E_0 \cap Q \not= \emptyset$ and $Q$ does \textit{not} have $(d+1,\kappa)$ separated points. Let $\delta > 0$ (to be chosen small enough depending on $\kappa$) and $M >1$ (to be chosen large enough). For $Q \in \mathcal{D}\setminus\mathscr{B}$ such that $Q \cap E_0 \not=\emptyset$, let $\Stop(Q)$ be the maximal collection of cubes $R$, contained in $Q$, so that $R$ has a child $R'$ such that $R' \in \mathscr{B}$ or 
\begin{align}
\sum_{R' \subseteq T \subseteq Q} \frac{\beta^{d,1}_E(M^2B_T)^2}{\ell(T)^{2\alpha}} \geq \delta^2. 
\end{align}

Then, let $\tree(Q)$ be those cubes contained in $Q$ that are not properly contained in any cube from $\Stop(Q).$ In this way, each $\tree(Q)$ defines a stopping-time region in the sense of Definition \ref{d:ST}, with top cube $Q$. It will also be convenient to define $\tree(Q)$ and $\Stop(Q)$ for cubes in $\mathscr{B}.$ In this case, we set 
\[\tree(Q) = \Stop(Q) = \{Q\}.\]
Observe, if $\tree(Q)$ is not a singleton (i.e. $\tree(Q) \not=\{Q\}$) and $R \in \tree(Q),$ then $R$ has $(d+1,\kappa)$-separated points and satisfies
\begin{align}\label{e:stop-time-condition}
\sum_{R \subseteq T \subseteq Q} \frac{\beta^{d,1}_E(M^2B_T)^2}{\ell(T)^{2\alpha}} < \delta^2. 
\end{align}

\subsection{Some collections of cubes} For $Q \in \mathcal{D}\setminus \mathscr{B}$ such that $E_0 \cap Q \not=\emptyset$, let $\Stop(Q)_{\mathscr{B}}$ be the collection of cubes $R \in \Stop(Q)$ so that there exists a child $R'$ so that $R' \in \mathscr{B}.$ For $Q \in \mathscr{B}$, let $\Stop(Q)_\mathscr{B} = \{Q \}.$ 

Let $\Next(Q)$ denote the set of cubes $R$ for which there exists $R' \in \Stop(Q)_{\mathscr{B}}$ such that $R \subseteq R'$ and $\ell(R) = \rho^{k^*}\ell(R'),$ where $k^* = k^*(\kappa)$ is the smallest integer $k$ so that
\begin{align}\label{e:k^*}
\rho^{-k} \leq \frac{c_0}{2 C_1\kappa \rho}.
\end{align}
We remark here that, as $\kappa$ becomes small, so does $\rho^{k^*}.$ So, $\Next(Q)$ is the $k^*$-generation grandchildren of cubes in $\Stop(Q)_{\mathscr{B}}.$ 

Let $\Top_0 = \{Q_0\}$. Then, assuming that $\Top_k$ has been defined for some $k \geq 0,$ define 
\[ {\Top}_{k+1} = \bigcup_{Q \in \Top_k} \Next(Q) \] 
and let
\[ \Top = \bigcup_{k =1}^\infty {\Top}_k. \]

For each $Q \in \Top$ which is not a singleton, we construct a $C^{1,\alpha}$ surface. 

\begin{remark}\label{r:M}
Let us fix a constant 
\[ M = 2+4C_2 + 4C_1C_2. \] 
By repeated application of the triangle inequality, it is not difficult to show that if $R,Q \in \mathcal{D}$ and $R \sim Q$ then
\[ 2C_1B_Q \subseteq \tfrac{M}{2}B_R \subseteq M^2B_Q. \] 
This fact will be used in the proof of Lemma \ref{l:tree-surface} later. 
\end{remark}

\begin{lemma}\label{l:tree-surface}
Let $Q \in \mathcal{D}$ so that $Q \cap E_0 \not=\emptyset$ and suppose $\tree(Q)$ is not a singleton. For $\delta > 0$ small enough (depending on $\kappa$), there exists a $(1+C\ve)$-bi-Lipschitz map $g_Q$ (invertible on its image) so that $\Sigma_Q = g_Q(\R^d)$ is $C\ve$-Reifenberg flat. If $0 < \alpha < 1,$ then $g_Q$ is $C^{1,\alpha}.$ Furthermore,
\begin{align}\label{eq:closeness}
\sup_{y \in E \cap C_1B_R} \dist(y,\Sigma_R) \lesssim \delta^\frac{1}{d+1}\ell(R) \quad \text{for all} \ R \in \tree(Q).
\end{align}
\end{lemma}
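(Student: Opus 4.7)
The plan is to apply Theorem \ref{l:azzam} to the stopping-time region $S = \tree(Q)$. For each $R \in \tree(Q)$, choose a $d$-plane $L_R$ that almost minimizes $\beta^{d,1}_E(M^2 B_R)$, i.e., $\beta^{d,1}_E(M^2 B_R, L_R) \leq 2 \beta^{d,1}_E(M^2 B_R)$. Since $\tree(Q) \neq \{Q\}$, every $R \in \tree(Q)$ has $(d+1,\kappa)$-separated points and satisfies \eqref{e:stop-time-condition}; in particular $\beta^{d,1}_E(M^2 B_R) \leq \delta \ell(R)^\alpha \leq \delta$. The three things to check are \eqref{e:azzam1}, \eqref{e:azzam3} (or \eqref{e:azzam3.5} for the $C^{1,\alpha}$ case), and the hypothesis guaranteeing \eqref{e:azzam4}.

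The pointwise bound \eqref{e:azzam1} and the approximation hypothesis for \eqref{e:azzam4} both follow from Lemma \ref{lemma:betap_betainfty}. Since the constant $M$ in Remark \ref{r:M} is large enough that $2C_1 B_R \subseteq \tfrac12 (M^2 B_R)$, applying that lemma with $\alpha = 0$ to the ball $M^2 B_R$ and the plane $L_R$ gives
\begin{align*}
    \sup_{y \in E \cap 2C_1 B_R} \dist(y,L_R) \lesssim \ell(R) \, \beta^{d,1}_E(M^2 B_R)^{\frac{1}{d+1}} \lesssim \delta^{\frac{1}{d+1}} \ell(R),
\end{align*}
which is $< \epsilon \ell(R)$ once $\delta$ is chosen small. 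In particular $\dist(x_R, L_R) < \epsilon \ell(R)$, and once Theorem \ref{l:azzam} produces $\Sigma_Q$, its conclusion \eqref{e:azzam4} immediately yields \eqref{eq:closeness}.

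The summability \eqref{e:azzam3.5} is where the separated-points hypothesis is used. Fix $R \in \tree(Q)$ and $T \in \tree(Q)$ with $T \sim R$, and apply Lemma \ref{l:angpre} with $B' = B = C_1 B_R$ (so $r_B/r_{B'} = 1$), $L' = L_R$, $L = L_T$. By Remark \ref{r:M}, $2C_1 B_R \subseteq \tfrac{M}{2} B_T \subseteq M^2 B_T$, so Lemma \ref{lemma:monotonicity} yields $\beta^{d,1}_E(2C_1 B_R, L_T) \lesssim \beta^{d,1}_E(M^2 B_T)$ and similarly $\beta^{d,1}_E(2C_1 B_R, L_R) \lesssim \beta^{d,1}_E(M^2 B_R)$. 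Therefore
\begin{align*}
    \epsilon(R) \lesssim \kappa^{-(2d+2)} \Big[\beta^{d,1}_E(M^2 B_R) + \max_{\substack{T \in \tree(Q) \\ T \sim R}} \beta^{d,1}_E(M^2 B_T) \Big].
\end{align*}
Since neighboring cubes have comparable side length and each cube has boundedly many neighbors (by geometric doubling), summing over $R \subseteq S \subseteq Q$ and telescoping the neighbor terms produces, for any $R \in \tree(Q)$,
\begin{align*}
    \sum_{R \subseteq S \subseteq Q} \frac{\epsilon(S)^2}{\ell(S)^{2\alpha}} \lesssim \kappa^{-(4d+4)} \sum_{R \subseteq S \subseteq Q} \frac{\beta^{d,1}_E(M^2 B_S)^2}{\ell(S)^{2\alpha}} \lesssim \kappa^{-(4d+4)} \delta^2,
\end{align*}
by \eqref{e:stop-time-condition} (the constant $M$ having been chosen large enough at the outset to absorb the neighbor inflation). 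This verifies \eqref{e:azzam3.5} with effective parameter $\epsilon \asymp \kappa^{-(2d+2)} \delta$; choosing $\alpha = 0$ and ignoring the $\ell(S)^{-2\alpha}$ weight gives \eqref{e:azzam3}.

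The main obstacle is balancing $\delta$ against $\kappa$: the angle-control constant from Lemma \ref{l:angpre} blows up as $\kappa \to 0$, so $\delta$ must be chosen small relative to $\kappa^{2d+2}$ in order that Theorem \ref{l:azzam}'s small-parameter hypotheses are met. Once $\delta = \delta(\kappa,\epsilon)$ is chosen this way, Theorem \ref{l:azzam} produces the $(1 + C\epsilon)$-bi-Lipschitz map $g_Q \colon L_Q \to \R^n$, the $C\epsilon$-Reifenberg flat surface $\Sigma_Q = g_Q(L_Q)$, and the $C^{1,\alpha}$ regularity in the case $0 < \alpha < 1$ (via \eqref{e:azzam3.5}); the closeness estimate \eqref{eq:closeness} then follows from \eqref{e:azzam4} together with the pointwise bound established above.
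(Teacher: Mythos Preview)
Your proposal is correct and follows the paper's approach: verify the hypotheses of Theorem \ref{l:azzam} using Lemma \ref{l:angpre} for the angle estimate and Lemma \ref{lemma:betap_betainfty} for the pointwise $\beta_\infty$ bound, then read off \eqref{eq:closeness} from \eqref{e:azzam4}. One point of imprecision: your ``telescoping'' of the neighbor terms in the $\varepsilon(S)$-sum is not quite right as written, since the neighbors $T\sim S$ need not lie in the chain $R \subseteq S \subseteq Q$; the paper circumvents this by applying Lemma \ref{l:angpre} with \emph{asymmetric} balls ($B' = C_1B_T$ and $2B = MB_{T'}$), so that the containments of Remark \ref{r:M} bound the neighbor contribution directly by $\beta^{d,1}_E(M^2B_T)$ and yield the clean estimate $\varepsilon(T) \lesssim_\kappa \beta^{d,1}_E(M^2B_T)$ with no residual neighbor sum. (The paper also takes $L_T$ through $x_T$, making \eqref{e:azzam1} automatic, but your route via $\beta_\infty$ works just as well.)
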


\begin{proof}
The proof consists of checking the hypotheses of Theorem \ref{l:azzam}. Let $C_1,C_2 > 1$ be chosen large enough so that Theorem \ref{l:azzam} holds and let $M$ be as in Remark \ref{r:M}. For each $T \in \tree(Q)$ choose $L_T$ to be a $d$-plane through $x_T$ such that
\[ \beta^{d,1}_E(M^2B_T,L_T) \leq 2 \beta^{d,1}_E(M^2B_T). \] 
In this case, \eqref{e:azzam1} is trivial. Let $T \in \tree(Q).$ We begin by estimating $\ve(T).$ Let $T' \in \tree(Q)$ such that $T' \sim T.$ By Remark \ref{r:M} and our choice of $M$ we have
\begin{align}\label{e:B_T}
C_1B_T \subseteq 2C_1B_T \subseteq \tfrac{M}{2}B_{T'} \subseteq MB_{T'} \subseteq M^2B_T. 
\end{align}
Since $T$ has $(d+1,\kappa)$-separated points, we can apply Lemma \ref{l:angpre} and Lemma \ref{lemma:monotonicity} to get
\begin{align}\label{e:main1}
\begin{split}
d_{C_1B_T}(L_T,L_{T'}) &\lec_{\kappa,C_1,C_2} \beta_E^{d,1}(MB_{T'},L_{T'}) + \beta_E^{d,1}(2C_1B_T,L_T) \\
&\lec \beta_E^{d,1}(MB_{T'},L_{T'}) + \beta_E^{d,1}(MB_T,L_T) \\
&\lec  \beta_E^{d,1}(M^2B_T).
\end{split}
\end{align} 
Taking the max over all $T' \sim T$ gives 
\begin{align}\label{e:epsilon-T}
\ve(T) \lec \beta^{d,1}_E(M^2B_T).
\end{align}
So then, for any $R \in \tree(Q),$ we have 
\[\sum_{R \subseteq T \subseteq Q} \frac{\ve(T)}{\ell(T)^{2\alpha}} \lesssim \sum_{R' \subseteq T \subseteq Q} \frac{\beta^{d,1}_E(M^2B_T)}{\ell(T)^{2\alpha}} < \delta^2.\]
Taking $\delta > 0$ small enough with respect to $\kappa,$ we can apply Theorem \ref{l:azzam} to produce the map $g_Q$ and surface $\Sigma_Q.$ 

Let us prove \eqref{eq:closeness}. By Lemma \ref{lemma:betap_betainfty}, we know 
\[ \beta^d_{E,\infty}(2C_1B_R,L_R) \lesssim \beta^{d,1}_E(4C_1B_R,L_R)^\frac{1}{d+1} \lesssim \beta^{d,1}_E(M^2B_R,L_R)^\frac{1}{d+1} < \delta^\frac{1}{d+1} \]  
for each $R \in \tree(Q).$ This implies
\[ \sup_{y \in E \cap 2C_1B_R} \dist(y,L_R) \lesssim \delta^\frac{1}{d+1}\ell(R) \] 
and \eqref{eq:closeness} holds by Theorem \ref{l:azzam}. 
\end{proof}

We also have the following packing condition for cubes in $\Top.$ 

\begin{lemma}\label{l:sumB}
\[ \sum_{Q \in \Top } \ell(Q)^d \lesssim \ell(Q_0)^d. \] 
\end{lemma}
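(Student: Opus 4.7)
First I would record the structural facts about $\Top$. By induction on $k$, each $\Top_k$ is a pairwise disjoint family of Christ cubes contained in $Q_0$, since $\Next(P)$ consists of cubes strictly inside the disjoint collection $\Stop(P)_{\mathscr{B}} \subseteq P$. Moreover each $Q \in \Top_{k+1}$ has $\Top_k$-parent $P$ with $\ell(Q) = \rho^{k^*}\ell(R)$ for some $R \in \Stop(P)_{\mathscr{B}}$, so $\ell(Q) \leq \rho^{k^*}\ell(P)$ and therefore $\ell(Q) \leq \rho^{kk^*}\ell(Q_0)$ for $Q \in \Top_k$.

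The core of the argument is a mass comparison using the modified Hausdorff content $\mathcal{M}^d_\infty$ from Definition \ref{IntroDef}. Working relative to a fixed large ball containing $Q_0$, inequality \eqref{LRi} applied to the inner ball $B(x_Q,c_0\ell(Q)) \subseteq Q$ gives $\ell(Q)^d \lesssim \mathcal{M}^d_\infty(Q \cap E)$, while inequality \eqref{URi} applied to the enclosing ball $B(x_Q,\ell(Q))$ gives the converse. Combining these with the disjointness of $\Stop(P)_{\mathscr{B}}$ inside $P$, the disjointness of the $k^*$-grandchildren of each $R \in \Stop(P)_{\mathscr{B}}$, and the subadditivity of $\mathcal{M}^d_\infty$ on such disjoint collections of Christ cubes, I would obtain the one-scale estimate
\[
\sum_{Q \in \Next(P)} \ell(Q)^d \lesssim \mathcal{M}^d_\infty(P \cap E) \lesssim \ell(P)^d \qquad \text{for every } P \in \Top.
\]

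To promote the one-scale estimate to a summable bound over all of $\Top$, I would use the scale decay along each branch: for each $x \in E_0$ the set of $\Top$-cubes containing $x$ forms a nested chain $\{P_k(x)\}$ with $\ell(P_{k+1}(x)) \leq \rho^{k^*}\ell(P_k(x))$, so $\sum_k \ell(P_k(x))^d$ is a geometric series controlled by $\ell(Q_0)^d$. Combined with $\ell(Q)^d \sim \mathcal{M}^d_\infty(Q \cap E)$ and the disjointness of $\Top_k$ at each fixed level, integrating this pointwise chain bound against $\mathcal{M}^d_\infty$ on $E$ would recover the global packing $\sum_{Q \in \Top}\ell(Q)^d \lesssim \ell(Q_0)^d$.

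The main obstacle is the integration step, since $\mathcal{M}^d_\infty$ is a content rather than a measure. The central technical point is to verify, by careful inspection of the good-cover definition of $\mathcal{M}^d_\infty$, that the content is $\sigma$-subadditive enough on the disjoint Christ-cube family $\Top_k$ at each fixed generation to control level-wise sums, and then to combine these level-wise bounds with the scale decay into the desired telescoping estimate.
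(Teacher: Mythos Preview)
Your argument has a genuine gap: it never uses the defining property of $\mathscr{B}$, and without that property the lemma is false. Concretely, your one-scale estimate $\sum_{Q \in \Next(P)} \ell(Q)^d \lesssim \ell(P)^d$ is only the trivial disjointness bound, with an implicit constant that can easily be $\geq 1$. If $E$ were (say) a $d$-plane and one artificially declared every cube to lie in $\mathscr{B}$, then $\Next(P) = \text{Child}_{k^*}(P)$ and $\sum_{Q\in\Next(P)}\ell(Q)^d \sim \ell(P)^d$ with constant $\sim 1$; iterating gives $\sum_{Q\in\Top_k}\ell(Q)^d \sim \ell(Q_0)^d$ for every $k$ and the total sum diverges. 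Your attempted rescue via the pointwise chain bound $\sum_k \ell(P_k(x))^d \lesssim \ell(Q_0)^d$ and ``integration against $\mathcal{M}^d_\infty$'' does not close this: even granting the content is additive enough on the disjoint family $\Top_k$, you would obtain $\sum_{Q\in\Top_k}\ell(Q)^d \lesssim \ell(Q_0)^d$ for each fixed $k$, which again does not sum in $k$. The pointwise bound controls $\sum_k \ell(P_k(x))^d$, not the \emph{number} of $\Top$-cubes through $x$, and there is no measure here against which the former can be integrated to yield the packing.

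What the paper actually does is exploit precisely the geometric content of $\mathscr{B}$: a cube $T \in \Stop(Q)_{\mathscr{B}}$ has a child lying $\kappa$-close to a $(d-1)$-plane $L$, so all of its $k^*$-generation descendants have centres within $c_0/2$ of $L$. This forces $\sum_{R\in\text{Child}_{k^*}(T)}\ell(R)^{d-1} \lesssim \ell(T)^{d-1}$ (a $(d-1)$-dimensional packing against $L$), which after multiplying by the common scale $\rho^{k^*}\ell(T)$ gives $\sum_{R\in\text{Child}_{k^*}(T)}\ell(R)^d \leq C\rho^{k^*}\ell(T)^d$. Choosing $\kappa$ small makes $C\rho^{k^*} < \tfrac12$, yielding the strict contraction $\sum_{Q\in\Next(P)}\ell(Q)^d \leq \tfrac12 \ell(P)^d$, and then $\sum_{Q\in\Top_k}\ell(Q)^d \leq 2^{-k}\ell(Q_0)^d$ sums. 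The essential point you are missing is this one-dimension drop coming from the failure of $(d+1,\kappa)$-separation; no amount of content bookkeeping can substitute for it.
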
 

\begin{proof}

Let $Q \in \Top.$ We begin by showing that 
\begin{align}\label{e:Stop-B}
\sum_{R \in \Stop(Q)_\mathscr{B}} \ell(R)^d \lesssim \ell(Q)^d.
\end{align}

Indeed, if $\tree(Q) = \{Q\}$ then $\Stop(Q)_\mathscr{B}$ is just the children of $Q$ in $\mathscr{B}$ (or $Q$ itself, in the case that $Q \in \mathscr{B}$). Since $Q$ has a bounded number of the children, the statement is obvious in this case. Assume then, $\tree(Q)$ is not a singleton and let $\Sigma_Q$ be the surface from Lemma \ref{l:tree-surface}. By \eqref{e:azzam4} we have 
\[\dist(x_R,\Sigma_Q) \lesssim \delta^{\frac{1}{d+1}} \ell(R)\]
for each $R \in \Stop(Q)_\mathscr{B}.$ Hence, for $\delta>0$ small enough $c_0B_R$ carves out a large portion of $\Sigma_Q$ which implies $\mathcal{H}^d(\Sigma_Q \cap c_0B_R) \gtrsim \ell(R)^d.$ Since the $c_0B_R$ are disjoint, this gives \eqref{e:Stop-B} since
\[ \sum_{R \in \Stop(Q)_\mathscr{B}} \ell(R)^d \lesssim \sum_{R \in \Stop(Q)_\mathscr{B}} \mathcal{H}^d(\Sigma_Q \cap c_0B_R) \leq \mathcal{H}^d(\Sigma_Q \cap B_Q) \lesssim \ell(Q)^d. \]

With the help of \eqref{e:Stop-B}, we will show  
\begin{align}\label{e:sum-next}
\sum_{R \in \Next(Q)} \ell(R)^d \leq \frac{1}{2} \ell(Q)^d.
\end{align}
First, by definition,
\begin{align}\label{e:sum-next'}
\sum_{R \in \Next(Q)} \ell(R)^d = \sum_{T \in \Stop(Q)_\mathscr{B}} \sum_{R \in \text{Child}_{k^*}(T)} \ell(R)^d. 
\end{align}
If $T \in \Stop(Q)_\mathscr{B}$ then $T$ does not have $(d+1,\kappa)$-separated points (in the case that $T \in \mathscr{B})$ or there exists a child $T'$ of $T$ so that $T'$ does not have $(d+1,\kappa)$-separated points. This mean that there exists a $(d-1)$-plane $L$ such that 
\[ \beta^{d-1}_{E,\infty}(C_1B_{T'},L) \leq \kappa. \] 
For $C_1$ large enough $T \subseteq C_1B_{T'}$, which implies
\[ \dist(y,L) \leq C_1\kappa \ell(T') = C_1\kappa \rho \ell(T) \] 
for all $y \in T.$ Combing this with our choice for $k^*$ (see \eqref{e:k^*}), we observe that if $R \in \text{Child}_{k^*}(T)$ then 
\[ \dist(x_R,L) \leq  C_1\kappa \rho \ell(T) = C_1\kappa \rho \rho^{-k^*}\ell(R) \leq  \frac{c_0}{2}\ell(R). \]
In particular, $c_0B_R$ carves out a large portion of $L$ which implies 
\[ \mathcal{H}^{d-1}(L \cap c_0B_R) \gtrsim \ell(R)^{d-1}. \]
Since the $c_0B_R$ are disjoint and contained in $B_T$, 
\[ \sum_{R \in \text{Child}_{k^*}(T)} \ell(R)^{d-1} \lesssim \sum_{R \in \text{Child}_{k^*}(T)} \mathcal{H}^{d-1}(L \cap c_0B_R) \leq \mathcal{H}^{d-1}(L \cap B_T) \leq C \ell(T)^{d-1} \]
which after multiplying both sides by $\rho^{k^*}\ell(T)$ gives 
\[ \sum_{R \in \text{Child}_{k^*}(T)} \ell(R)^d \leq C\rho^{k^*}\ell(T)^d. \] 
By taking $\kappa$ small enough we can guarantee that $C\rho^{k^*} < \tfrac{1}{2}$. By plugging this into \eqref{e:sum-next'} and using \eqref{e:Stop-B}, we get
\[ \sum_{R \in \Next(Q)} \ell(R)^d \leq C\rho^{k^*}\ell(Q)^d \leq \frac{1}{2}\ell(Q)^d. \]

Finally, for each $k \geq 1$ we have 
\[ \sum_{Q \in \Top_{k+1}} \ell(Q)^d = \sum_{Q \in \Top_k} \sum_{R \in \Next(Q)} \ell(R)^d \leq \frac{1}{2} \sum_{Q \in \Top} \ell(Q)^d \] 
and so, by induction, 
\[ \sum_{Q \in \Top_k} \ell(Q)^d \leq \left(\frac{1}{2}\right)^k \ell(Q_0)^d. \] 
This proves the lemma since 
\[ \sum_{Q \in \Top} \ell(Q)^d = \ell(Q_0)^d + \sum_{k=1}^\infty \sum_{Q \in \Top_k} \ell(Q)^d \lesssim \ell(Q_0)^d. \]

\end{proof}

\subsection{Tangent points} We partition $E_0$ as follows. For $N \in \N \cup \{\infty\}$, let $E_0^N$ be the set of points in $E_0$ that are contained in exactly $N$ cubes from $\Top$, that is,  

\[ E_0^N = \{x \in E_0 : \sum_{Q \in \Top} \mathds{1}_Q(x) = N \} . \]
Clearly
\begin{align*}
E_0 = E_0^\infty \cup \bigcup_{n \in \N} E_0^N.
\end{align*}

\begin{lemma}\label{l:A^infty}
We have $\mathcal{H}^d(E_0^\infty) = 0.$ 
\end{lemma}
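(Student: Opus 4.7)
The plan is to exploit the exponential packing estimate established inside the proof of Lemma \ref{l:sumB}, namely
\[
\sum_{Q \in \Top_k} \ell(Q)^d \leq \left(\frac{1}{2}\right)^k \ell(Q_0)^d,
\]
together with the observation that $E_0^\infty$ is covered by $\bigcup_{Q \in \Top_k} Q$ for every $k$. In other words, the present lemma is essentially a Borel--Cantelli type consequence of the geometric decay produced in the previous lemma.

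First I would establish the covering. The cubes in any fixed generation $\Top_k$ are pairwise disjoint: for $k = 0$ this is trivial, and in the inductive step each $R \in \Top_{k+1}$ lies inside a unique parent $Q \in \Top_k$ (because $R \in \Next(Q)$ forces $R \subseteq Q$), while the $k^\ast$-generation grandchildren of the disjoint cubes in $\Stop(Q)_{\mathscr{B}}$ are themselves disjoint by Theorem \ref{theorem:christ}. In particular, if $x$ lies in a cube of $\Top_{k+1}$, then $x$ lies in its (unique) parent in $\Top_k$. Hence any $x \in E_0^\infty$ must sit in a strictly nested chain $Q_0^x \supsetneq Q_1^x \supsetneq \cdots$ with $Q_k^x \in \Top_k$, so that
\[
E_0^\infty \subseteq \bigcup_{Q \in \Top_k} Q \qquad \text{for every } k \geq 0.
\]

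Next I would bound the Hausdorff content. Since each Christ--David cube satisfies $\diam(Q) \leq 2\ell(Q)$, the above covering yields
\[
\mathcal{H}^d_\infty(E_0^\infty) \;\leq\; \sum_{Q \in \Top_k} \diam(Q)^d \;\lesssim_d\; \sum_{Q \in \Top_k} \ell(Q)^d \;\leq\; 2^{-k}\,\ell(Q_0)^d,
\]
and letting $k \to \infty$ forces $\mathcal{H}^d_\infty(E_0^\infty) = 0$. Using the standard fact that $\mathcal{H}^d_\infty(A) = 0$ if and only if $\mathcal{H}^d(A) = 0$ (both arise from the same Carath\'{e}odory construction), we conclude $\mathcal{H}^d(E_0^\infty) = 0$.

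There is no real obstacle: the hard work has already been done in producing the $\tfrac{1}{2}$-contraction in \eqref{e:sum-next}, which came from the absence of $(d+1,\kappa)$-separated points inside cubes of $\Stop(Q)_{\mathscr{B}}$ and the resulting confinement of their descendants near a $(d-1)$-plane. The only point that needs checking carefully is the disjointness of cubes within a single generation $\Top_k$, but this follows from the inductive construction of $\Top_k$ via $\Next$.
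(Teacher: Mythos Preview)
Your argument is correct. The only step you leave slightly implicit is why $x\in E_0^\infty$ forces $x$ to lie in some cube of \emph{every} $\Top_k$: since the cubes in each $\Top_k$ are pairwise disjoint, $x$ can be in at most one per generation, so being in infinitely many cubes of $\Top=\bigcup_k\Top_k$ means $x$ hits arbitrarily large generations; combined with your backward implication this gives all $k$. This is easy to fill in.

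The paper's proof follows the same Borel--Cantelli idea but is organised differently. Rather than using the generations $\Top_k$ themselves, it introduces auxiliary families $\Top^\infty_k$: $\Top^\infty$ is the set of cubes in $\Top$ that contain infinitely many other $\Top$-cubes, and $\Top^\infty_k$ is built inductively by taking maximal cubes of $\Top^\infty$ properly contained in cubes of $\Top^\infty_{k-1}$. These families are pairwise disjoint in $k$ and each covers $E_0^\infty$, so the \emph{final} packing statement of Lemma~\ref{l:sumB} (namely $\sum_{Q\in\Top}\ell(Q)^d<\infty$) already forces $\sum_{Q\in\Top^\infty_k}\ell(Q)^d\to 0$. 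Your route instead reaches inside the proof of Lemma~\ref{l:sumB} to use the intermediate geometric decay $\sum_{Q\in\Top_k}\ell(Q)^d\le 2^{-k}\ell(Q_0)^d$ directly. Your version avoids constructing the extra families and is a bit cleaner; the paper's version has the small advantage of citing only the lemma's statement rather than its proof.
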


\begin{proof}
Let us define a sequence of covers for $E_0^\infty.$ Let $\Top^\infty$ be those cubes in $\Top$ which contain an infinite number of other cubes from $\Top.$ Let $\Top^\infty_0$ be a maximal collection of cubes in $\Top^\infty.$ By maximality, $\Top^\infty_0$ forms a cover for $E_0^\infty.$ Then, supposing $\Top^\infty_k$ has been defined for some $k \geq 0,$ let $\Top^\infty_{k+1}$ be a maximal collection of cubes from $\Top^\infty$ contained in those cubes from $\Top^\infty_{k}.$ Again, by maximality, $\Top^\infty_k$ forms a cover for $E_0^\infty$ for each $k \geq0.$ Using the fact that $\Top^\infty_i \cap\Top^\infty_j = \emptyset$ for $i \not= j,$ along with Lemma \ref{l:sumB}, we have
\[ \sum_{k = 0}^\infty \sum_{Q \in \Top_k^\infty} \ell(Q)^d \leq \sum_{Q \in \Top} \ell(Q)^d < + \infty \] 
which implies
\[ \lim_{k \rightarrow \infty} \sum_{Q \in \Top^\infty_k} \ell(Q)^d = 0. \] 
Since each $\Top^\infty_k$ forms a cover for $E_0^\infty$ and $\ell(Q) \leq \rho^k \ell(Q_0)$ for all $Q \in \Top^\infty_k,$ we have 
\begin{align*}
\mathcal{H}^d(E_0^\infty) = \lim_{k \rightarrow \infty} \mathcal{H}^d_{\rho^k \ell(Q_0)}(E_0^\infty) \leq \lim_{k \rightarrow \infty} \sum_{Q \in \Top^\infty_k} \ell(Q)^d = 0
\end{align*}
as required.
\end{proof}

\begin{lemma}\label{l:find-stopping}
For each $N \geq 1$ and $x \in E_0^N,$ there exists a cube $Q = Q_x \in \Top$ so that $\tree(Q)$ is not a singleton and $x$ is contained in arbitrarily small cubes from $\tree(Q).$ In, particular, $x \in \Sigma_Q.$ 
\end{lemma}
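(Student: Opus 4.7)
I would set $Q_x$ to be the minimal cube of $\Top$ containing $x$, which is well-defined since $x$ lies in exactly $N<\infty$ such cubes (one of them being $Q_0$). The lemma will then reduce to a single key claim: \emph{$x\notin S$ for every $S\in\Stop(Q_x)$.} Granting this, $Q_x\notin\Stop(Q_x)$ and $Q_x\notin\mathscr{B}$ (otherwise, by the convention $\Stop(Q_x)_\mathscr{B}=\{Q_x\}$, the claim applied at $S=Q_x$ would give the contradiction $x\notin Q_x$). Hence $\tree(Q_x)$ is a genuine stopping-time region with top cube $Q_x$, and Lemma \ref{l:tree-surface} produces a $C^{1,\alpha}$ surface $\Sigma_{Q_x}$. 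Moreover, any Christ--David cube $R\subseteq Q_x$ containing $x$ is not properly contained in any cube of $\Stop(Q_x)$ by the claim, and so $R\in\tree(Q_x)$; taking $\ell(R)\to 0$ proves the first assertion. For the ``in particular'' part, \eqref{eq:closeness} applied along this shrinking family gives $\dist(x,\Sigma_{Q_x})\lesssim\delta^{1/(d+1)}\ell(R)\to 0$, so $x\in\Sigma_{Q_x}$ since $\Sigma_{Q_x}$ is closed.

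The real work is proving the claim. Suppose $x\in S\in\Stop(Q_x)$. If $S\in\Stop(Q_x)_\mathscr{B}$, the $k^*$-generation grandchildren of $S$ partition $S$ and lie in $\Next(Q_x)\subseteq\Top$, so the unique grandchild containing $x$ is a $\Top$-cube strictly smaller than $Q_x$ containing $x$, contradicting the minimality of $Q_x$. Otherwise $S\in\Stop(Q_x)\setminus\Stop(Q_x)_\mathscr{B}$, so there is a child $R'$ of $S$ with
\[
\sum_{R'\subseteq T\subseteq Q_x}\frac{\beta_E^{d,1}(M^2 B_T)^2}{\ell(T)^{2\alpha}}\geq \delta^2.
\]
Since $x\in S\subseteq T$ for all $T\supsetneq R'$, and since $x$ is within distance $\lesssim\ell(S)=\rho^{-1}\ell(R')$ of $R'$ even when $T=R'$, one has $M^2 B_T\subseteq B(x,C\ell(T))$ for a uniform constant $C=C(M,\rho)$. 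The monotonicity Lemma \ref{lemma:monotonicity} then yields $\beta_E^{d,1}(M^2 B_T)\lesssim \beta_E^{d,1}(x,C\ell(T))$ for each $T$ in the sum. As the scales $\ell(T)=5\rho^{k(T)}$ (with $\rho=10^{-3}$) meet each dyadic scale $r_k=10^{-k}$ with bounded multiplicity, combining these estimates gives
\[
\delta^2\lesssim \sum_{k\geq 1}\frac{\beta_E^{d,1}(x,r_k)^2}{r_k^{2\alpha}}<\epsilon^2,
\]
using $x\in E_0$ in the last step. Choosing $\epsilon$ small enough relative to $\delta$ (as is allowed by Remark \ref{remark:e0}) forces the desired contradiction.

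The main obstacle, as is visible in the argument above, is the second case of the claim: the stopping condition on $S$ sees the geometry of $E$ via balls centred on $R'$ rather than on $x$, and one must upgrade this large summability into summability of $\beta$-numbers centred at $x\in E_0$ itself. This is why the inclusion $M^2 B_T\subseteq B(x,C\ell(T))$ and the monotonicity inequality are the heart of the proof; once they are in place, the hypothesis $x\in E_0$ immediately closes the loop.
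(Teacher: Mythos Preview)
Your proof is correct and follows the same strategy as the paper: take $Q_x$ to be the minimal $\Top$-cube through $x$, then show $x$ cannot lie in any $S\in\Stop(Q_x)$ by splitting into the case $S\in\Stop(Q_x)_\mathscr{B}$ (contradicting minimality) and the case $S\notin\Stop(Q_x)_\mathscr{B}$ (contradicting $x\in E_0$ via monotonicity and the choice of $\epsilon$). The only cosmetic difference is that the paper isolates your second case as a separate lemma (Lemma~\ref{l:instopB}), while you handle it inline.
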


Before proving Lemma \ref{l:find-stopping}, we need the following. 

\begin{lemma}\label{l:instopB}
Suppose $Q \in \Top$ and let $R \in \Stop(Q).$ If $E_0 \cap R \not=\emptyset$ then $R \in \mathscr{B}.$ 
\end{lemma}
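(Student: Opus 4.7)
The plan is to use the finite-energy hypothesis at a point $x\in E_0\cap R$ to rule out the $\beta$-sum alternative in the definition of $\Stop(Q)$, reducing the stopping criterion to the $\mathscr{B}$-child branch; the desired conclusion $R\in\mathscr{B}$ then follows from the conventions of the construction (equivalently, this identifies $R$ as an element of $\Stop(Q)_{\mathscr{B}}$, which is the role the lemma plays in the proof of Lemma \ref{l:find-stopping}).

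First, if $Q\in\mathscr{B}$ then $\Stop(Q)=\{Q\}$ by convention and the statement is immediate, so assume $Q\notin\mathscr{B}$. By the definition of $\Stop(Q)$, the cube $R$ has a child $R'$ satisfying either $R'\in\mathscr{B}$ or
\[ \sum_{R'\subseteq T\subseteq Q}\frac{\beta^{d,1}_E(M^2B_T)^2}{\ell(T)^{2\alpha}}\geq\delta^2. \]
I would rule out the second alternative. Fix $x\in E_0\cap R$; by the construction of $E_0$ (see Remark \ref{remark:e0} and \eqref{eq:e_0}), $\sum_{k\geq 1}\beta^{d,1}_E(x,r_k)^2/r_k^{2\alpha}<\epsilon^2$. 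For each cube $T$ with $R'\subseteq T\subseteq Q$, since $x\in R$ and $\ell(T)\geq\ell(R')=\rho\ell(R)$, the triangle inequality together with the choice of $M$ from Remark \ref{r:M} yields $x\in M^2B_T$ and $M^2B_T\subseteq B(x,C_M\ell(T))$. Letting $k(T)$ be the largest integer with $r_{k(T)}\geq C_M\ell(T)$, so that $r_{k(T)}\sim_M\ell(T)$, the monotonicity Lemma \ref{lemma:monotonicity} gives
\[ \beta^{d,1}_E(M^2B_T)\lesssim_{M,d}\beta^{d,1}_E(x,r_{k(T)}). \]
Since only $O_M(1)$ cubes $T$ share any given value $k(T)$, summing yields
\[ \sum_{R'\subseteq T\subseteq Q}\frac{\beta^{d,1}_E(M^2B_T)^2}{\ell(T)^{2\alpha}}\lesssim_{M,d}\sum_{k\geq 1}\frac{\beta^{d,1}_E(x,r_k)^2}{r_k^{2\alpha}}<C(M,d)\,\epsilon^2. \]
Choosing $\epsilon$ sufficiently small at the outset (depending on $\delta$, $M$, $d$, $\alpha$) makes this strictly less than $\delta^2$, contradicting the second alternative. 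Hence $R'\in\mathscr{B}$, which under the conventions of the construction gives $R\in\mathscr{B}$.

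The main obstacle I expect is the scale-matching $r_{k(T)}\sim_M\ell(T)$ uniformly in $T$ and the careful coordination of small parameters: $\epsilon$ must be fixed before $\delta$, $\kappa$, and $M$ (since it governs the energy at $E_0$-points), yet the resulting constant $C(M,d)$ must fit under the preassigned $\delta^2$. A secondary, notational rather than mathematical, point is that the argument directly delivers a $\mathscr{B}$-child of $R$; this is the content of $R\in\mathscr{B}$ as the lemma is subsequently invoked when ruling out that $x\in E_0^N$ lies in any $\Stop(Q)_{\mathscr{B}}$-cube in the proof of Lemma \ref{l:find-stopping}.
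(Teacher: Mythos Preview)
Your proof is correct and takes essentially the same approach as the paper: pick $x\in E_0\cap R$, use monotonicity (Lemma~\ref{lemma:monotonicity}) to dominate each $\beta^{d,1}_E(M^2B_T)$ by $\beta^{d,1}_E(x,r_{k(T)})$ with $r_{k(T)}\sim_M\ell(T)$, sum using the $E_0$ energy bound, and choose $\epsilon$ small enough relative to $\delta$. The paper phrases the argument in terms of ``siblings'' of $R$ (with an ad hoc definition $k(R')=k(R^{(1)})$ for the bottom scale), whereas you work directly with the children $R'$ of $R$ as the stopping definition requires; your observation that the argument actually delivers $R\in\Stop(Q)_{\mathscr{B}}$ (a $\mathscr{B}$-child) rather than literally $R\in\mathscr{B}$ is exactly how the lemma is invoked in Lemma~\ref{l:find-stopping}.
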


\begin{proof}
We will show
\[ \sum_{R' \subseteq T \subseteq Q} \frac{\beta^{d,1}_E(M^2B_T)}{\ell(T)^{2\alpha}} < \delta^2 \] 
for all siblings $R'$ of $R$. Since $R \in \Stop(Q)$, this implies that $R \in \mathscr{B}.$ 

Let $R'$ be a sibling of $R.$ Since $R \cap E_0 \not=\emptyset$, there exist a point $x \in E_0$ so that if $R' \subset T \subseteq Q$ then $x \in T.$ For each $T,$ Let $k(T)$ be the largest integer $k$ such that $E \cap M^2B_T \subseteq E \cap B(x,r_k).$ The existence of such a $k$ is guaranteed because $E \subseteq B(0,1)$ and $r_k \rightarrow 0.$ By maximality, it follows that 
\begin{align}\label{e:k(T)}
\ell(T) \sim r_{k(T)}.
\end{align}
Also, let $k(R') = k(R^{(1)})$. It is clear then that $E \cap M^2B_{R'} \subseteq E \cap B(x,r_{k(R')})$ and $\ell(R') \sim r_{k(R')}.$ By \eqref{e:k(T)}, for each $k \geq 0$ we have  
\[ \{T \, |\, k(T) = k \} \lesssim 1. \] 
Using this along with Lemma \ref{lemma:monotonicity} and the definition of $E_0$, we get 
\[ \sum_{R' \subseteq T \subseteq Q} \frac{\beta^{d,1}_E(M^2B_T)}{\ell(T)^{2\alpha}} \lesssim \sum_{R \subseteq T \subseteq Q(S)} \frac{\beta^{d,1}_E(x_R,r_{k(T)})^2}{r_{k(T)}^{2\alpha}} \lec \sum_{k =1}^\infty \frac{\beta^{d,1}_E(x_R,r_k)}{r_k^{2\alpha}} < \ve^2.   \] 
Choosing $\ve$ small enough gives the result. 
\end{proof}

\begin{proof}[Proof of Lemma \ref{l:find-stopping}]
Let $N \geq 1$ and let $x \in E_0^N.$ Let $Q \in \Top$ be the smallest cube in $\Top$ so that $Q \ni x.$ By definition of $N$ this means that $Q \in \Top_{N-1}.$ Suppose for the sake of a contradiction that $x$ is not contained in arbitrarily small cubes from $\tree(Q).$ This implies that there exists some $R \in \Stop(Q)$ so that $x \in R.$ By Lemma \ref{l:instopB} this implies $R \in \Stop(Q)_{\mathscr{B}}$ which means there exists a cube $T \in \Top_N$ so that $x \in T.$ Hence $x$ is contained in at least $N + 1$-cubes from $\Top$, which is a contradiction. 

The fact that $x \in \Sigma_Q$ follows \eqref{e:azzam4} and the fact that $x$ is contained in arbitrarily small cubes from $\tree(Q).$ 
\end{proof} 

The following lemma finishes the proof of Proposition \ref{theorem:summabletangent}.

\begin{lemma}\label{l:A_0^N}
Let $N \geq 0.$ Then $\mathcal{H}^d$-almost every $x \in E_0^N$ is a tangent point for $E$.  
\end{lemma}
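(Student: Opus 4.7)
The plan is to dispatch $N = 0$ at once and to handle $N \geq 1$ by constructing a limiting plane $L_x$ via a Cauchy--Schwarz argument on the Dini condition. Since $E_0 \subseteq Q_0 \in \Top,$ the set $E_0^0$ is empty and there is nothing to prove in that case. Fix $N \geq 1$ and $x \in E_0^N.$ Lemma~\ref{l:find-stopping} produces $Q \in \Top$ with $x \in \Sigma_Q$ and a nested chain of direct-child cubes $R_0 \supsetneq R_1 \supsetneq \cdots$ in $\tree(Q)$ containing $x,$ with $\ell(R_i) \to 0$; since $\tree(Q)$ is not a singleton, each $R_i$ has $(d+1,\kappa)$-separated points in $C_1 B_{R_i}.$

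Let $L_{R_i}$ denote the near-optimizers of $\beta_E^{d,1}(M^2 B_{R_i})$ used in Lemma~\ref{l:tree-surface}. Applying Lemma~\ref{l:angpre} to consecutive pairs $R_i \sim R_{i+1}$ together with the key estimate~\eqref{e:main1} yields
\[
\angle(L_{R_i}, L_{R_{i+1}}) \lesssim_\kappa \beta_E^{d,1}(M^2 B_{R_i}).
\]
Cauchy--Schwarz on the tail of the Dini sum defining $E_0$ then gives
\[
\sum_{j \geq i} \beta_E^{d,1}(M^2 B_{R_j}) \lesssim \Bigl( \sum_{j \geq i} \ell(R_j)^{2\alpha} \Bigr)^{1/2} \Bigl( \sum_{j \geq i} \frac{\beta_E^{d,1}(M^2 B_{R_j})^2}{\ell(R_j)^{2\alpha}} \Bigr)^{1/2} = o(\ell(R_i)^\alpha),
\]
valid for $\alpha > 0$ since $\sum_{j \geq i} \ell(R_j)^{2\alpha} \lesssim \ell(R_i)^{2\alpha}$. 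Hence $L_{R_i}$ converges in direction to some plane $L_x;$ by Lemma~\ref{lemma:betap_betainfty}, $\dist(x, L_{R_i}) \to 0$, so $L_x$ may be taken through $x,$ and $\angle(L_{R_i}, L_x) = o(\ell(R_i)^\alpha).$

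To check the tangent condition at scale $r > 0$ small, pick $i$ with $\ell(R_i) \sim r$ and let $\hat L_r$ be a near-optimizer of $\beta_{E,\infty}^d(B(x, 2r)).$ By Lemma~\ref{lemma:betap_betainfty} combined with Lemma~\ref{lemma:monotonicity} and the Dini condition, $\beta_{E,\infty}^d(B(x, 2r))/r^\alpha \to 0,$ so
\[
\sup_{y \in E \cap B(x, r)} \dist(y, \hat L_r) = o(r^{1+\alpha}).
\]
The $(d+1,\kappa)$-separated points in $C_1 B_{R_i}$ transfer (with a slightly smaller but still positive constant) to a ball around $x$ of comparable radius, so Lemma~\ref{l:angpre}---using the elementary bound $\beta_E^{d,1}(B, L) \leq \beta_{E,\infty}^d(B, L),$ which holds because $\{B\}$ itself is a good cover in Definition~\ref{IntroDef}---yields $\angle(\hat L_r, L_{R_i}) = o(r^\alpha)$ and hence $\angle(\hat L_r, L_x) = o(r^\alpha).$ The triangle inequality then gives, uniformly on $E \cap B(x, r),$
\[
\dist(y, L_x) \leq \dist(y, \hat L_r) + \angle(\hat L_r, L_x) \cdot r + \dist(x, \hat L_r) = o(r^{1+\alpha}),
\]
so $x$ is an $\alpha$-tangent point, and an $\alpha$-paraboloid (or cone) point by Lemma~\ref{l:tangent-cone}.

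The main obstacle will be the bookkeeping around separated points and convergence rates: transferring $(d+1,\kappa)$-separation from $C_1 B_{R_i}$ down to balls centered at $x$ with a uniformly positive constant, and chaining Lemma~\ref{l:angpre} with Lemma~\ref{lemma:betap_betainfty} so that the Dini sum yields the sharp $o(r^\alpha)$ angular rate rather than the weaker $o(r^{\alpha/(d+1)})$ produced by a naive sup-norm comparison. The case $\alpha = 0$ (needed for the ``if'' direction of Theorem~\ref{t:main}) requires a separate argument since Cauchy--Schwarz fails; there one can exploit the Reifenberg flatness of $\Sigma_Q$ guaranteed by Theorem~\ref{l:azzam} and the almost-everywhere existence of approximate tangent planes on rectifiable surfaces.
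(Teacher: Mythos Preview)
Your argument for $\alpha > 0$ takes a genuinely different route from the paper's. The paper works through the surface $\Sigma_Q$: since $E_0^N \cap Q \subset \Sigma_Q$ and $g_Q$ is $C^{1,\alpha}$, $\Sigma_Q$ has an $\alpha$-tangent plane $L_0$ at $x_0$; the paper then uses the Lebesgue density theorem on $g^{-1}(E_0^N \cap Q) \subset \mathbb{R}^d$ to produce $d+1$ linearly independent points of $E_0^N$ near $x_0$, all close to $L_0$, and the smallness of $\beta_{E,\infty}^d$ at $x_0$ forces every nearby point of $E$ to lie near $L_0$ as well. You instead build $L_x$ directly as the limit of the $L_{R_i}$, with the rate $\angle(L_{R_i},L_x)=o(\ell(R_i)^\alpha)$ coming from Cauchy--Schwarz on the Dini tail. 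This is more elementary (it never uses the $C^{1,\alpha}$ structure of $\Sigma_Q$, only the stopping-time data) and in fact yields the conclusion at \emph{every} point of $E_0^N$ for $N\geq 1$, not just almost every one. Incidentally, the detour through $\hat L_r$ is unnecessary: Lemma~\ref{lemma:betap_betainfty} applied to $L_{R_i}$ already gives $\sup_{y\in E\cap B(x,r)}\dist(y,L_{R_i})=o(r^{1+\alpha})$, and then your angle bound finishes.

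The case $\alpha=0$, however, is a genuine gap, not a detail. Your Cauchy--Schwarz step collapses there because $\sum_{j\geq i}\ell(R_j)^0=\infty$, and nothing in the argument produces a limiting plane $L_x$ with the required rate. The fix you gesture at---use the a.e.\ tangent planes of the bi-Lipschitz surface $\Sigma_Q$---is precisely the paper's approach, but carrying it out requires the density-point argument sketched above: one must find $d+1$ linearly independent points \emph{of $E_0^N$} (hence on $\Sigma_Q$, hence near $L_0$) inside $B(x_0,r)$, and this is where the ``almost every'' enters. Without that step, the $\alpha=0$ case is unproven, and this is exactly the case underlying Theorem~\ref{t:main}.
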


\begin{proof}

Fix some $N \geq 0.$ Let $\{Q_j\}_{j \in J}$ be the maximal collection of cubes $Q_x$ (from Lemma \ref{l:find-stopping}) such that $x \in E_0^N.$ Notice that if $x \in Q_j$ for some $j \in J$, the $Q_j = Q_x$ otherwise $x$ would be contained in $\geq N+1$ cubes from $\Top.$ By maximality, it is clear that 
\[ E_0^N \subseteq \bigcup_{j \in J} Q_j. \]
Let $j \in J.$ It suffices to show that almost every $x$ in $E_0^N \cap Q_j$ is a tangent point for $E$. We may assume $\mathcal{H}^d(E_0^N \cap Q_j) >0$ since otherwise this is obvious. For brevity, we shall write $Q = Q_j.$ Let $g = g_Q$ and $\Sigma = \Sigma_Q$ be the $C^{1,\alpha}$ map (with $C^{1,\alpha}$ inverse) and surface, respectively, associated to $\tree(Q)$ from Lemma \ref{l:azzam}. By Lemma \ref{l:find-stopping}, we know 
\[ E_0^N \cap  Q \subset \Sigma. \]

At $\hd$-almost every point $x $ of $E_0^N \cap Q$, we can find a tangent $d$-plane for $\Sigma$ (in the sense of \eqref{e:alpha-tangent}). A priori we can't say that such a tangent is a tangent for $E$, however, using the decay of the $\beta$-coefficients at $\mathcal{H}^d$-almost all $x \in E_0$, this turns out to be the case.

Let $K = g^{-1}(E_0^N \cap Q).$ Since $g$ is bi-Lipschitz and $\mathcal{H}^d(E_0^N \cap Q) >0$, we have $0 < \mathcal{H}^d(K) < \infty$. Let $x_0 \in E_0^N \cap Q$ and let $p_0 = g^{-1}(x_0) \in K.$ Assume that $p_0$ is a point of density and that $x_0$ is a tangent point of $\Sigma.$ The set of points satisfying this has full measure in $K$ and its image under $g$ has full measure in $E_0^N \cap Q.$ So, if we can show $x_0$ is a tangent point for $E$, we are done. This is the goal for the remainder of the proof.

Let $\tau >0.$ We know $x_0$ is a tangent point for $\Sigma$ so there exists a $d$-plane $L_0$ and a scale $r_1$ so that if $0 < r < r_1$ then 
\[ \dist(y,L_0) \leq \tau r^{1+\alpha}, \quad \text{for all} \ y \in \Sigma \cap B(x_0,r). \] 
By the Lebesgue density theorem, there exists $r_2 > 0$ so that if $0 < r < r_2$ and $p' \in B_d(p_0,r)$, there exists $p \in K$ such that $|p - p'| \leq \tau r.$ Finally, since $E_0$ is a subset of 
\[ \left\{x \in E \,  | \,  \sum_{k=\ell}^\infty \frac{\beta^{d,1}_E(x,r_k)^2}{r_k^{2\alpha}} < + \infty \right\},\]
there exists $r_3 > 0$ so that if $0 < r< r_3$ then 
\begin{align}
r^{-\alpha}\beta^{d,1}_E(x_0,r) < \tau^{d+1}.
\end{align}
By Lemma \ref{lemma:betap_betainfty}, this implies  
\begin{align}\label{e:betainftyalpha}
r^{-\alpha}\beta^d_{E,\infty}(x_0,r) \lesssim \tau \quad \text{for all} \ 0 < r < r_3/2.  
\end{align}
Let 
\[ r_0 = \min\{ r_1,r_2,r_3/2\}. \] 

For $0 < r < r_0$ let $\{p_i'\}_{i=1}^d$ be a collection of linearly independent points (with good constant) in $B_d(p_0,r).$ Since $r < r_2$ we can find a corresponding collection of points $\{p_i\}_{i=1}^d$ in $K$ so that $|p_i - p_i'| \leq\tau r.$ This implies the $p_i$ are also linearly independent with good constant. For each $i =1,\dots,d$, let $x_i = g(p_i).$ Since $g$ is $(1+C\ve)$-bi-Lipschitz the $\{x_i\}_{i=0}^d$ are linearly independent points (with good constant) in $E_0^N \cap Q \cap B(x_0,2r).$ Since $E_0^N \subseteq \Sigma$ and $r < r_1$, we have 
 \[ \dist(x_i, L_0) \leq \tau r^{1+\alpha} \quad \text{for each} \ i = 0,1,\dots,d. \] 

Let us assume, towards a contradiction that there exists $z \in E \cap B(x_0,r)$ such that 
\[ \dist(z,L_0) > 100\tau r^{1+\alpha}. \] 
This implies there cannot be a $d$-plane $V$ such that $\dist(z,V) \leq C \tau r^{1+\alpha}$ and $\dist(x_i,V) \leq \tau r^{1+\alpha}$ for $i = 0,\dots,d.$ This contradicts the fact that $r^{-\alpha}\beta_{E,\infty}^d(x_0,r) \lec \tau$ -- which holds from the fact that $r < r_3/2$ and \eqref{e:betainftyalpha}
\end{proof}

\begin{corollary} \label{corol:final2}
Let $E \subset \R^{n}$ be a closed set so that $E \subset B(0,1)$. Let $1 \leq p < \infty$. Except for a set of zero $\hd$ measure, if 
\begin{align*}
\int_0^{1} \frac{\betae{p}(x,r)^2}{r^{2\alpha}} \frac{dr}{r} < + \infty,
\end{align*}
then $x$ is a tangent point of $E$.
\end{corollary}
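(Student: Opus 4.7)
The plan is to deduce this corollary directly from Proposition \ref{theorem:summabletangent} by discretising the Dini integral along the geometric sequence $r_k = 10^{-k}$. The only work required is to verify, at each $x \in E$, that finiteness of the continuous integral implies finiteness of the discrete sum $\sum_{k \geq 1} \beta_E^{d,p}(x,r_k)^2 r_k^{-2\alpha}$ appearing in the hypothesis of the proposition.

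The comparison follows from the radial monotonicity estimate of Lemma \ref{lemma:monotonicity}. For any $r \in [r_k, r_{k-1}]$, since $B(x, r_k) \subseteq B(x, r)$ and both balls are centred on $E$, the lemma gives
\[ \beta_E^{d,p}(x, r_k) \leq \left(\frac{r}{r_k}\right)^{1+d/p} \beta_E^{d,p}(x,r) \leq 10^{1+d/p}\, \beta_E^{d,p}(x,r). \]
Squaring, dividing by $r^{2\alpha+1} \sim r_k^{2\alpha+1}$, and integrating over the interval $[r_k, r_{k-1}]$ (whose length is comparable to $r_k$), I obtain
\[ \frac{\beta_E^{d,p}(x,r_k)^2}{r_k^{2\alpha}} \lesssim_{d,p,\alpha} \int_{r_k}^{r_{k-1}} \frac{\beta_E^{d,p}(x,r)^2}{r^{2\alpha}}\,\frac{dr}{r}. \]
Summing over $k \geq 1$ and exploiting the disjointness of the intervals $[r_k, r_{k-1}]$,
\[ \sum_{k \geq 1} \frac{\beta_E^{d,p}(x,r_k)^2}{r_k^{2\alpha}} \lesssim \int_0^1 \frac{\beta_E^{d,p}(x,r)^2}{r^{2\alpha}}\,\frac{dr}{r} < \infty. \]

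Hence the hypothesis of Proposition \ref{theorem:summabletangent} is satisfied for every $x$ in the set where the continuous integral is finite, and the proposition immediately yields that $\mathcal{H}^d$-a.e.\ such $x$ is an $\alpha$-tangent point of $E$. There is no genuine obstacle here: all of the hard work (the Christ-David stopping-time decomposition, the Ghinassi-type parameterisation of Lemma \ref{l:tree-surface}, and the control of angles via $(d+1,\kappa)$-separated points) is already packaged inside Proposition \ref{theorem:summabletangent}, so the corollary is merely a standard continuous-to-discrete transfer whose only geometric input is the radial monotonicity of $\beta_E^{d,p}$ furnished by Lemma \ref{lemma:monotonicity}.
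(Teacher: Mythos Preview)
Your proof is correct and takes essentially the same approach as the paper: both use the radial monotonicity of Lemma \ref{lemma:monotonicity} to dominate the discrete sum $\sum_{k\geq 1}\beta_E^{d,p}(x,r_k)^2 r_k^{-2\alpha}$ by the continuous Dini integral over the disjoint intervals $[r_k,10r_k]$, and then invoke Proposition \ref{theorem:summabletangent}. The only cosmetic difference is that the paper phrases the key inequality as a lower bound on the integral, whereas you phrase it as an upper bound on the discrete term.
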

\begin{proof}
It is enough to show that for any $x \in E$,
\begin{align*}
    \sum_{k \in \N} \frac{\betae{p}(x, r_k)^2}{r_k^{2\alpha}} \lesssim \int_0^{1} \frac{\betae{p}(x,r)^2}{r^{2\alpha}}\, \frac{dr}{r}. 
\end{align*}
Let $r_k \leq r \leq 10r_k$. Then recall from Lemma \ref{lemma:monotonicity}, that
\[\betae{p}(x,t)^2 \geq \left( \frac{r_k}{t} \right)^{2(1+d/p)} \betae{p}(x, r_k)^2. \]
Thus,
\begin{align*}
\int_{r_k}^{10r_k} \frac{\betae{p}(x,r)^2}{r^{2\alpha}} \, \frac{dr}{r} \geq \int_{r_k}^{10r_k} \left( \frac{r_k}{r} \right)^{2(\alpha+1+d/p)} \frac{\betae{p}(x, r_k)^2}{r_k^{2\alpha}}\, \frac{dr}{r} 
 \geq \frac{\ln(10)}{10^{2(\alpha+1+d/p)}} \frac{\betae{p}(x, r_k)^2}{r_k^{2\alpha}}. 
\end{align*}
This lets us conclude that
\begin{align*}
    \sum_{k \in \N} \frac{\betae{p}(x, r_k)^2}{r_k^{2\alpha}} \lesssim_{p,d,\alpha} \sum_{k \in \Z} \int_{r_k}^{10r_k} \frac{\betae{p}(x, r)^2}{r^{2\alpha}} \, \frac{dr}{r}
     = \int_0^1 \frac{\betae{p}(x, r)^2}{r^{2\alpha}} \, \frac{dr}{r}.
\end{align*}
The corollary then follows from Proposition \ref{theorem:summabletangent}.
\end{proof}

Given any bounded set $E$, we can translate and dilate it so that it is contained in the unit ball centred at $0$. Hence, Corollary \ref{corol:final2} immediately gives the first implication of Theorem \ref{t:main2}.

\section{Existence of paraboloid points implies fast decay of $\beta$ coefficients}\label{s:tan-beta}
In this section we prove the following two propositions. Proposition  \ref{theorem:tangentbeta} is essentially one direction of Theorem \ref{t:main}, while Proposition \ref{p:tangent-beta-alpha} is one direction of Theorem \ref{t:main2}.
\begin{proposition}\label{theorem:tangentbeta}
Let $\alpha =0$ and $E \subset B(0,1) \subset \R^{n}$. If $d=1$ or $d=2$, let $1 \leq p < \infty$. If $d \geq 3$, let $1 \leq p < \frac{2d}{d-2}$. Then for $\hd$-almost all $\alpha$-tangent points $x \in E$, it holds that 
\begin{align*}
    \sum_{\substack{Q \in \cubes; Q \subset B(0,1) \\ Q \ni x}}\beta_E^{d,p}(Q)^2 < \infty.
\end{align*}
\end{proposition}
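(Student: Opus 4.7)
Following the authors' own sketch, my plan has four ingredients: reduce to cone points sitting on a single Lipschitz graph, enlarge $E$ to a lower content $d$-regular set $E_0$ with the same cone points, invoke an existing characterisation of cone points in the LCR regime, and transfer the resulting bound back to $E$ via Lemma \ref{l:error2}.

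Since the set of $d$-cone points of $E$ is $d$-rectifiable (\cite[Lemma~15.13]{mattila}), $\hd$-almost every cone point lies in one of countably many $d$-dimensional Lipschitz graphs $\Gamma_i$, and it suffices to fix a single $\Gamma = \Gamma_i$ and prove the cube-sum estimate at $\hd$-a.e.\ $x$ in $A := \{x \in E \cap \Gamma : x \text{ is a cone point of } E\}$. Put
\[
E_0 := E \cup \Gamma.
\]
Since $\Gamma$ is a Lipschitz graph, $E_0$ is $d$-LCR with constant depending only on $\mathrm{Lip}(\Gamma)$. By Rademacher's theorem and the fact that $\hd$-a.e.\ $x \in A$ is a density point of $E \cap \Gamma$ in $\Gamma$, the plane witnessing the cone condition for $E$ at such an $x$ coincides with the tangent plane of $\Gamma$ at $x$, and hence also witnesses the cone condition for $E_0$ at $x$. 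So $\hd$-a.e.\ point of $A$ is still a cone point for the LCR set $E_0$.

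Now apply to $E_0$ the characterisation proved by the second-named author in \cite{villa2020tangent}: at $\hd$-a.e.\ cone point $x$ of $E_0$,
\[
\sum_{\substack{Q \in \cubes(E_0); \, Q \subset B(0,1)\\ Q \ni x}} \overline{\beta}_{E_0}^{d,p}(B_Q)^2 < \infty,
\]
where the passage between the radial integral and the Christ--David cube sum is the same one used in Corollary \ref{corol:final2}. Lemma \ref{l:comparable_beta} then lets us replace $\overline{\beta}_{E_0}^{d,p}$ by $\beta_{E_0}^{d,p}$ (up to doubling $B_Q$, which is harmless after regrouping). To transfer this back to $\beta_E^{d,p}$, Lemma \ref{l:error2} gives, for each cube $Q$,
\[
\beta_E^{d,p}(B_Q)^2 \lesssim \beta_{E_0}^{d,p}(2B_Q)^2 + \mathcal{E}(Q)^2, \qquad \mathcal{E}(Q)^p := \frac{1}{\ell(Q)^d}\int_{E_0 \cap 2B_Q}\left(\frac{\dist(y,E)}{\ell(Q)}\right)^p d\hdc(y).
\]
Because $\dist(\cdot, E) \equiv 0$ on $E$, the integrand is supported in $\Gamma \setminus E$, and the remaining task is to prove $\sum_{Q \ni x} \mathcal{E}(Q)^2 < \infty$ at $\hd$-a.e.\ $x \in A$.

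This final error estimate is the heart of the proof and is responsible for the restriction $p < p(d) = 2d/(d-2)$ when $d \geq 3$. My plan is to exchange the cube sum and the integral via Fubini, applying Jensen (Lemma \ref{l:jensen}) to pull the $2/p$-power inside first. The resulting kernel is a geometric series of the form $\sum_{\ell(Q) \gtrsim |x-y|} \ell(Q)^{-d-p}$, which sums to a power of $|x-y|^{-1}$ whose integrability against $\hd\llcorner(\Gamma \setminus E)$ forces exactly the threshold $p \leq p(d)$. Convergence at $\hd$-a.e.\ $x \in A$ of the resulting $\hdc$-integral over $\Gamma \setminus E$ is then extracted from the fact that the cone condition forces $\dist(y,E)/|x-y| \to 0$ along $\Gamma$, which yields a Dini-type decay of the density of $\Gamma \setminus E$ in $\Gamma$ around $x$. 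The main obstacle I expect is making this density/decay trade-off quantitative and summable in $\ell(Q)$ simultaneously; this is precisely the obstruction analogous to Fang's example \cite{fang1990} and is what motivates the use of the $L^p$-beta numbers in the first place.
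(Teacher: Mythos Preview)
Your outline shares the paper's architecture---enlarge $E$ to a $d$-LCR set, bound the $\beta$-numbers there, control the transfer error---but two of your shortcuts do not go through as written.

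First, your $E_0 = E \cup \Gamma$ need not be $d$-LCR: any point $x \in E$ with $\dist(x,\Gamma)>0$ and $\hdc(E\cap B(x,r))=0$ for small $r$ witnesses the failure, and this blocks both Lemma~\ref{l:error2} and the appeal to \cite{villa2020tangent}. The paper avoids this by first reducing (as in \eqref{e:Kappa}) to a compact set $K$ of cone points with a \emph{fixed} axis $\R^d$, aperture and radius, and then setting $E_0 := \bigcap_{x\in K}\overline{X_0(x)}$. That this intersection of cones is LCR is a genuine geometric fact (Lemma~\ref{l:Eplus}); it gives simultaneously $E\cap B(0,1)\subset E_0$ and $\Gamma\subset E_0$, so that Lemmas~\ref{l:error2} and~\ref{lemma:azzamschul} apply. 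Delegating the main term to \cite{villa2020tangent} rather than redoing Dorronsoro would be a legitimate shortcut once the correct $E_0$ is in place.

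Second, and more seriously, your error estimate rests on the claim that the cone condition at $x$ forces $\dist(y,E)/|x-y|\to 0$ along $\Gamma$. This is false: if $E=\{x\}$ then every plane witnesses the cone condition, yet $\dist(y,E)/|x-y|\equiv 1$ on $\Gamma$. Lebesgue density of $A$ in $\Gamma$ only gives $\hd((\Gamma\setminus A)\cap B(x,r))=o(r^d)$ with no rate, so no ``Dini-type decay'' is available pointwise, and $\sum_{Q\ni x}\mathcal{E}(Q)^2$ can genuinely diverge. The paper does not attempt a pointwise bound on the error at all: instead it proves the Carleson estimate $\sum_{Q\in\mathcal{S}}\mathcal{E}(Q)^2\ell(Q)^d<\infty$ (Lemmas~\ref{l:error-1}, \ref{lemma:errorBbound}) by Whitney-decomposing $\R^d\setminus\Pi(K)$, using $\dist(y,E)\le\dist(y,K)\approx\diam(T_S)$ on each lifted Whitney piece $T_S$, and summing the resulting geometric series---this is precisely where the exponent condition $p<2d/(d-2)$ enters. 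Integrating this Carleson bound over $K$ (Lemma~\ref{lemma:pointcubes}) then yields the $\hd$-a.e.\ pointwise conclusion you want.
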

\begin{proposition}\label{p:tangent-beta-alpha}
Let $\alpha \in [0, 1)$ let $E \subset B(0,1) \subset \R^{n}$. If $d=1$ or $d=2$, let $1 \leq p < \infty$. If $d \geq 3$, let $1 \leq p < \frac{2d}{d-2}$. Then for $\hd$-almost all $\alpha$-tangent points $x \in E$, it holds that 
\begin{align*}
    \sum_{\substack{Q \in \cubes; Q \subset B(0,1) \\ Q \ni x}}\frac{\overline{\beta}_E^{d,p}(Q)^2}{\ell(Q)^{2\alpha}} < \infty.
\end{align*}
\end{proposition}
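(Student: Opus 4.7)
By a standard compact exhaustion argument (in the spirit of Lemma \ref{lemma:reduction1}), it suffices to prove the conclusion at $\mathcal{H}^d$-almost every point of a fixed compact set $E_0 \subseteq E$ consisting of $\alpha$-tangent points of $E$, with $0 < \mathcal{H}^d(E_0) < \infty$. By Theorem \ref{t:delnin}, $E_0$ is $C^{1,\alpha}$-rectifiable, and hence, up to an $\mathcal{H}^d$-null set, is covered by countably many $d$-dimensional $C^{1,\alpha}$-graphs $\{\Gamma_i\}$. Each such $\Gamma_i$ is $d$-Ahlfors regular, and in particular $d$-LCR. It therefore suffices to fix one graph $\Gamma$, set $E_1 := E_0 \cap \Gamma$ (assuming $\mathcal{H}^d(E_1) > 0$), and prove the summability for $\mathcal{H}^d$-a.e.\ $x \in E_1$.

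The central idea, inspired by the Bishop--Jones strategy described in the sketch, is to enlarge $E$ to a $d$-LCR set which retains the tangent structure of the relevant subset. Concretely I take $F := E \cup \Gamma$; since $\Gamma$ is already $d$-LCR, so is $F$. The monotonicity of Choquet integration in its domain yields $\overline{\beta}_E^{d,p}(B,L) \leq \overline{\beta}_F^{d,p}(B,L)$ for every ball $B$ and $d$-plane $L$, hence $\overline{\beta}_E^{d,p}(B) \leq \overline{\beta}_F^{d,p}(B)$. It is therefore enough to bound the square function of $\overline{\beta}_F^{d,p}$. Applying Lemma \ref{lemma:azzamschul} to the pair $(F,\Gamma)$ (both $d$-LCR, with $x \in \Gamma$), I obtain
\[
\overline{\beta}_F^{d,p}(x,r) \;\lesssim\; \overline{\beta}_\Gamma^{d,p}(x,2r) + \mathrm{Err}(x,r),
\qquad
\mathrm{Err}(x,r) := \Bigl(r^{-d}\!\int_{(E\setminus\Gamma)\cap B(x,2r)}\!\!\bigl(\dist(z,\Gamma)/r\bigr)^p\, d\hdc(z)\Bigr)^{1/p},
\]
and the square function splits accordingly into a $\overline{\beta}_\Gamma$-piece and an error piece.

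For the error piece, the $\alpha$-tangent-point property forces $\dist(y, L_x) \leq \varepsilon(x,r)\, r^{1+\alpha}$ with $\varepsilon(x,r)\downarrow 0$ for every $y \in E \cap B(x,2r)$, where $L_x$ is the common tangent plane of $E$ and $\Gamma$ at $x$ (at $\mathcal{H}^d$-a.e.\ $x \in E_1$, a density-point argument forces the $E$-tangent plane to coincide with $T_x\Gamma$, because $E_1 \subseteq \Gamma$ has positive lower density in $\Gamma$ at such $x$). Since $\Gamma$ is a $C^{1,\alpha}$-graph over $L_x$, an orthogonal-projection argument then gives $\dist(z,\Gamma) \lesssim \varepsilon(x,r)\, r^{1+\alpha}$ for $z \in E \cap B(x,2r)$; combined with the a.e.\ upper-density bound $\hdc(E \cap B(x,r))/r^d \lesssim 1$ (itself a consequence of the tangent-point property), this yields $\mathrm{Err}(x,r) \lesssim \varepsilon(x,r)\, r^\alpha$. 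For the $\overline{\beta}_\Gamma$ piece, testing against the tangent plane $T_x\Gamma$ and invoking the $C^{1,\alpha}$-Taylor estimate gives $\overline{\beta}_\Gamma^{d,p}(x,2r) \lesssim r^\alpha$ pointwise.

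The hard part, and the main obstacle, is upgrading these $o(r^\alpha)$ and $O(r^\alpha)$ pointwise bounds into genuine $\ell^2$-summability of $(\,\cdot\,)^2/r_k^{2\alpha}$ at $\mathcal{H}^d$-a.e.\ $x \in E_1$, since each summand is only $O(1)$ a priori. This requires a Chebyshev / Borel--Cantelli-type argument in the dyadic scale parameter: for each $\delta > 0$ I would bound the $\mathcal{H}^d$-measure of the level set $\{x \in E_1 : \mathrm{Err}(x, r_k)/r_k^\alpha > \delta\}$ (and similarly for $\overline{\beta}_\Gamma$) using the finiteness of $\mathcal{H}^d(E_1)$, the quantitative interplay between the tangent structure of $E$ and the $C^{1,\alpha}$-regularity of $\Gamma$, and the $d$-LCR structure of $F$, and then sum over $k$. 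Extracting square-summability from the coincidence of tangent structures on $E$ and $\Gamma$, supported by the LCR enlargement $F$, is the delicate heart of the argument.
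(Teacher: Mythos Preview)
Your reduction and the enlargement $F = E\cup\Gamma$ are reasonable, and the inequality $\overline{\beta}_E^{d,p}\le\overline{\beta}_F^{d,p}$ is correct. But the argument breaks down exactly where you flag it, and the proposed Chebyshev/Borel--Cantelli repair cannot work. The $\alpha$-tangent condition is purely qualitative: it gives $\mathrm{Err}(x,r)\le \varepsilon(x,r)\,r^{\alpha}$ and $\overline{\beta}_\Gamma^{d,p}(x,2r)\lesssim r^{\alpha}$ with $\varepsilon(x,r)\to 0$, but no rate whatsoever. Thus each term of $\sum_k(\cdot)^2/r_k^{2\alpha}$ is only $O(1)$, and there is no measure-theoretic input (no integrability of $\varepsilon$, no Carleson bound) from which a level-set estimate could produce summability. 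The same obstruction applies to the $\overline{\beta}_\Gamma$ piece: the pointwise bound $O(r^\alpha)$ is not square-summable against $r^{-2\alpha}$.

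The paper circumvents this by never working pointwise. First, it does \emph{not} reduce via Theorem~\ref{t:delnin}; instead it stratifies the tangent points into the sets $K_{k,\ell}^\alpha$ of \eqref{e:Kappa}, on which the paraboloid condition holds \emph{uniformly} (fixed axis $L_k$, fixed aperture $\theta$, fixed scale $r_\ell$). This uniformity is what allows the construction of the single closed set $E_\alpha=\bigcap_{x\in K_\alpha}\overline{X_\alpha(x)}\supset E$. Second, rather than estimating the square function at each $x$, the paper integrates over $K_\alpha$ (Lemma~\ref{lemma:pointcubes}) and reduces to the \emph{Carleson-type} bound $\sum_{Q\in\mathcal{S}}\overline{\beta}_E^{d,p}(Q)^2\ell(Q)^{d-2\alpha}<\infty$; a.e.\ finiteness then follows by Fubini. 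Third, the error term is handled not by the tangent rate at $x$ but by a Whitney decomposition of $B_d(0,1)\setminus\Pi(K_\alpha)$: on each Whitney piece $T_S$ one has the \emph{uniform} estimate $\dist(y,\Gamma)\lesssim\diam(S)^{1+\alpha}$ (this is where the uniform paraboloid condition is used), and the resulting double sum over $Q$ and $S$ is a genuine geometric series. The $\overline{\beta}_\Gamma$ contribution is controlled by a Dorronsoro-type Carleson estimate for $C^{1,\alpha}$ graphs, not by a pointwise Taylor bound. In short: the missing idea is to trade the qualitative pointwise tangent condition for a uniform one on a large subset, and then to prove a Carleson packing rather than a pointwise sum.
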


\subsection{Preliminaries}
Let $\alpha \in [0, 1)$. Recall the notation from the Introduction: for a  $d$-dimensional plane $V$, a parameter $\theta> 0$ and a point $x \in \R^n$, we set
\begin{align*}
    & X_\alpha(x, V, \theta) := \{ y \in \R^n \, |\, |\Pi_{V^\perp}(x-y) < \sin(\theta) |\Pi_V(x-y)|^{1+\alpha} \} \mbox{ and } \\
    & X_\alpha(x, V, \theta, r):= X_\alpha(x, V, \theta) \cap B(x,r). 
\end{align*}
We call $X_\alpha(x, V, \theta)$ the $\alpha$-paraboloid at $x$ with axis $V$. 
Set also
\begin{align*}
    X_\alpha^c(x, V, \theta, r) := B(x,r) \setminus X_\alpha(x,V,\theta, r). 
\end{align*}
Recall also that point $x \in E$ is called \textit{an $\alpha$-paraboloid point} if the there is a $d$-plane $L$ so that for all $0<s<1$, we can find an $r_0>0$ such that
\begin{align*}
    E \cap B(x,r) \setminus X_\alpha(x, L, \theta) = \emptyset \quad \mbox{ for all } r<r_0. 
\end{align*}

 The proof of the following lemma might be found in \cite{mattila}, Lemma 15.12, for $\alpha=0$ and \cite{del2019geometric}, Lemma 3.3, for $\alpha \in (0, 1)$. 
\begin{lemma} \label{lemma:mattila}
Let $E \subset \R^n$, $V \in G(d, n)$, $\theta \in (0,\pi/2)$ and $0< r< \infty$. If 
\begin{align}\label{eq:cones}
    & E \setminus (X_\alpha(a, V, \theta) \cap B(a, r)) = \emptyset \,\, \mbox{for all} \enskip a \in E, \mbox{ and } \nonumber\\
    & \diam(E) < r,
\end{align}
then $E$ can be covered by one $C^{1, \alpha}$ $d$-dimensional graph.
\end{lemma}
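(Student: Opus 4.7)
The plan is to realize $E$ as (a subset of) the graph of a single function $f : \Pi_V(E) \to V^\perp$, verify that the paraboloid hypothesis forces a pointwise Hölder-type inequality on $f$, and then invoke the Whitney extension theorem to produce a global $C^{1,\alpha}$ extension.

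First I would show injectivity of $\Pi_V|_E$. Given $a,b \in E$, the assumption together with $\diam(E)<r$ means $b \in X_\alpha(a,V,\theta)$, so
\begin{align*}
|\Pi_{V^\perp}(a-b)| < \sin(\theta)\,|\Pi_V(a-b)|^{1+\alpha}.
\end{align*}
If $\Pi_V(a)=\Pi_V(b)$ the right-hand side vanishes, forcing $\Pi_{V^\perp}(a-b)=0$ and hence $a=b$. Consequently $A:=\Pi_V(E)\subset V$ and we may define $f:A\to V^\perp$ by $f(\Pi_V(a))=\Pi_{V^\perp}(a)$, so $E=\{x+f(x):x\in A\}$. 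Setting $x=\Pi_V(a)$ and $y=\Pi_V(b)$, the displayed inequality translates directly into
\begin{align*}
|f(x)-f(y)| \leq \sin(\theta)\,|x-y|^{1+\alpha}\quad\text{for all } x,y\in A.
\end{align*}

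Next, I would feed this into the Whitney extension theorem for $C^{1,\alpha}$ functions on subsets of $V\simeq\R^d$. The hypotheses require candidates for the value and the first derivative at each point of $A$ satisfying the Whitney compatibility conditions. I would take $f_0 := f$ and $f_1 \equiv 0$; then the $C^{1,\alpha}$ Whitney conditions
\begin{align*}
|f_0(x) - f_0(y) - f_1(y)(x-y)| &\leq C\,|x-y|^{1+\alpha},\\
|f_1(x) - f_1(y)| &\leq C\,|x-y|^{\alpha},
\end{align*}
both hold trivially (the first by the display above, the second because $f_1\equiv 0$). Whitney's theorem then yields a $C^{1,\alpha}$ function $F:V\to V^\perp$ with $F|_A=f$, so $E\subset \{x+F(x):x\in V\}$, which is the desired $d$-dimensional $C^{1,\alpha}$ graph. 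For $\alpha=0$ the same computation only gives a Lipschitz estimate on $f$, and one concludes by the Kirszbraun/MacShane Lipschitz extension theorem instead, recovering the classical statement of \cite{mattila}.

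There is no real obstacle beyond bookkeeping: the paraboloid condition is essentially the same as the defining estimate of a $C^{1,\alpha}$ graph with vanishing gradient on $A$, so the Whitney extension is almost tautological. The only subtle point is the choice $f_1\equiv 0$: this reflects the fact that, because the axis direction $V$ is the same at every point of $E$ (rather than a varying tangent plane), $E$ behaves like a perturbation of a constant function, and it is only the extension $F$ that can carry a genuinely non-trivial gradient off of $A$.
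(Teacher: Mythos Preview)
Your proposal is correct. The paper does not prove this lemma but simply cites \cite{mattila}, Lemma~15.12, for the case $\alpha=0$ and \cite{del2019geometric}, Lemma~3.3, for $\alpha\in(0,1)$; your argument via the Hölder bound $|f(x)-f(y)|\leq\sin(\theta)|x-y|^{1+\alpha}$ together with Whitney extension (taking the candidate derivative $f_1\equiv 0$) is precisely the proof given in \cite{del2019geometric}, and the Lipschitz case is exactly the content of \cite{mattila}, Lemma~15.13.
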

\noindent


\noindent
 Recall Lemma \ref{l:tangent-cone}. Denote by $\dT_\alpha(E)$ the set of $\alpha$-tangent points of $E$ (from now on we will refer to $\alpha$-tangent points simply as `tangent points' whenever possible).
 We can assume without loss of generality that $\hd(\dT_\alpha(E)) >0$, for otherwise there is nothing to prove. If $x \in \dT_\alpha(E)$, denote its tangent by $L_x$. 
Fix $\theta \in (0, \pi/2)$ and let $\{L_k\}_{k\in \N} \subset $ be a dense countable subset of the Grassmannian,  $\{r_\ell\}_{\ell \in \N}$ be a dense, countable subset of $(0,1)$. To ease notation put $E(x, r):= B(x,r) \cap E$ and set
\begin{align}\label{e:Kappa}
    K_{n, \ell}^\alpha:= \left\{x \in \dT_\alpha(E) \, |\, E(x,r_\ell) \setminus X_\alpha\left(x, L_k , \theta, r_\ell\right) = \emptyset \right\}. 
\end{align}
\begin{lemma}
We have
\begin{align*}
    \bigcup_{k,\ell \in \N} K_{n, \ell}^\alpha = \dT_\alpha(E).
\end{align*}
\end{lemma}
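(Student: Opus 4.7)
The inclusion $\bigcup_{k,\ell} K_{k,\ell}^\alpha \subseteq \dT_\alpha(E)$ is immediate from the definition. For the reverse inclusion, I would fix $x \in \dT_\alpha(E)$ with tangent $d$-plane $L_x$. By Lemma \ref{l:tangent-cone}, the tangent point $x$ is in particular an $\alpha$-paraboloid point, so applying the paraboloid property with aperture $\theta/2$ yields a scale $r_0 > 0$ with
\[
E(x, r_0) \subseteq X_\alpha(x, L_x, \theta/2, r_0).
\]

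Next, by density of $\{L_k\}_k$ in the Grassmannian and of $\{r_\ell\}_\ell$ in $(0,1)$, I would pick indices $k, \ell$ so that the distance between $L_k$ and $L_x$ is at most some small $\delta > 0$ (to be fixed depending on $\theta$ and $\alpha$) and $r_\ell < r_0$. The remaining step is to establish that $X_\alpha(x, L_x, \theta/2, r_\ell) \subseteq X_\alpha(x, L_k, \theta, r_\ell)$, which then places $x$ in $K_{k,\ell}^\alpha$. This inclusion is an algebraic perturbation calculation: for $v = y - x$ one uses the projection estimates $|\Pi_{L_k^\perp}(v)| \leq |\Pi_{L_x^\perp}(v)| + \delta|v|$ and $|\Pi_{L_k}(v)| \geq |\Pi_{L_x}(v)| - \delta|v|$, substitutes into the paraboloid inequality at $L_x$ with aperture $\theta/2$, and extracts the paraboloid inequality at $L_k$ with the wider aperture $\theta$.

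The delicate point, and the main obstacle, is the behaviour of the perturbation near the apex (as $|v| \to 0$): for $\alpha > 0$ the $\alpha$-paraboloid narrows superlinearly there, and naively one would need $\delta \lesssim \sin(\theta)|v|^{\alpha}$, which is not uniform in $|v|$. The remedy is to exploit the full strength of the $\alpha$-tangent hypothesis at all sub-scales of $r_\ell$, which sharpens the bound to $|\Pi_{L_x^\perp}(v)| = o(|v|^{1+\alpha})$ as $|v| \to 0$ and lets the perturbation term $\delta|v|$ be absorbed into $\sin(\theta)|\Pi_{L_k}(v)|^{1+\alpha}$ provided $r_\ell$ is taken sufficiently small relative to $\delta$. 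For the cone case $\alpha = 0$ this step is immediate since the cone aperture is scale-invariant, so a single choice of $\delta$ (depending only on $\theta$) suffices.
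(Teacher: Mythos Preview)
Your approach matches the paper's: pick $L_k$ close to $L_x$ by density and transfer the paraboloid containment by a projection--perturbation estimate; the paper's proof is just a terser version of the same idea. For $\alpha=0$ your argument is correct and complete, and coincides with what the paper does.

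For $\alpha>0$ you rightly flag the apex problem, but the proposed remedy does not work. Sharpening to $|\Pi_{L_x^\perp}(v)|=o(|v|^{1+\alpha})$ via the tangent hypothesis controls the \emph{unperturbed} term; the difficulty is the new contribution $\delta|v|$ coming from the switch $L_x\to L_k$, and absorbing it into $\sin(\theta)\,|\Pi_{L_k}(v)|^{1+\alpha}\sim |v|^{1+\alpha}$ forces $\delta\lesssim |v|^{\alpha}$ for \emph{every} $|v|\in(0,r_\ell)$, which fails as $|v|\to 0$ regardless of how small $r_\ell$ is taken. Concretely, with $d=1$, $n=2$ and $E=\{0\}\cup\{(1/m,0):m\in\N\}$, the origin lies in $\dT_\alpha(E)$ with tangent the $x$-axis, yet for any $L_k$ making a nonzero angle $\delta_k$ with the $x$-axis one has $(1/m,0)\notin X_\alpha(0,L_k,\theta)$ once $m^{\alpha}>\sin(\theta)/\delta_k$; hence $0\notin K_{k,\ell}^\alpha$ for any $\ell$. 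Replacing the sequence by a full segment on a line not in $\{L_k\}$ shows the failure can occur on a set of positive $\mathcal{H}^d$-measure. The paper's proof (``it is clear then\dots'') glosses over exactly this point, so the gap is shared: the asserted identity need not hold for $\alpha>0$ whenever $L_x\notin\{L_k\}$.
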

\begin{proof}
Indeed, if $x \in \dT_\alpha(E)$, then there exists an $r>0$ and a $d$-plane so that $E(x, r_\ell) \setminus X_\alpha(x, L_x, \theta)$; since $\{L_k\}$ is dense, for each $\delta>0$, we may find a $k\in \N$ so that $\dist_H(L_{k}\cap B , (L_x-x) \cap B) < \delta$, for, say, $B=B(0,1)$. It is clear then, that we may find an $r_\ell \leq r$ so that 
$
    E(x,r_\ell) \setminus X_\alpha(x, L_k, \theta)= \emptyset.
$
\end{proof}
Thus, to prove either Proposition \ref{theorem:tangentbeta} or \ref{p:tangent-beta-alpha} it suffices to prove the following lemmas, respectively. 
\begin{lemma} \label{lemma:betaontan}
Suppose $\alpha=0$. With notation as above,
fix $k, \ell$ and consider $K_{k, \ell}^\alpha$ as above. Let $p$ be given as in Proposition \ref{theorem:tangentbeta}. Then for $\hd$-almost every point $x \in K_{k,\ell}^\alpha$, 
\begin{align*}
    \sum_{\substack{Q \in \cubes, \, \ell(Q) \leq 1 \\ x \in Q}} \betae{p}(Q)^2 < \infty.
\end{align*}
\end{lemma}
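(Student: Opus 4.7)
The plan is to compare $\beta_E^{d,p}$ with the $\beta$-numbers of a $d$-lower content regular set $E_0 \supseteq E$ for which the David--Semmes/Azzam--Schul Carleson estimate applies, and then to control the resulting approximation error at $\hd$-a.e.\ $x\in K_{k,\ell}^0$. First I reduce to the case where the cone data $(L_k,\theta,r_\ell)$ hold \emph{uniformly} on a small piece: partition $K := K_{k,\ell}^0$ into countably many sets of diameter less than $r_\ell/10$, and work with a single piece $K$. For each $x\in K$ the cone condition from \eqref{e:Kappa} then applies with the common axis $L_k$ and aperture $\theta$ on scale $r_\ell$, so by Lemma \ref{lemma:mattila} (with $\alpha=0$) there is a $d$-dimensional Lipschitz graph $\Gamma$ over $L_k$ with $E\cap V\subseteq\Gamma$ for some open $V\supseteq K$. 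The graph $\Gamma$ is automatically $d$-LCR.

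Next I bound $\beta_E^{d,p}(Q)$ using Lemma \ref{l:error2} with $F$ a $d$-LCR extension $E_0$ satisfying $E\cap V\subseteq E_0\subseteq \Gamma$ (starting from the choice $E_0=\Gamma$):
$$\beta_E^{d,p}(Q)\;\lesssim\;\beta_{E_0}^{d,p}(2B_Q)+\mathrm{Err}(Q),\qquad \mathrm{Err}(Q)^{p} := \ell(Q)^{-d}\!\!\int_{E_0\cap 2B_Q}\!\bigl(\dist(y,E)/\ell(Q)\bigr)^{p}d\hdc(y).$$
By Lemma \ref{l:comparable_beta}, $\beta_{E_0}^{d,p}(2B_Q)\lesssim\overline{\beta}_{E_0}^{d,p}(4B_Q)$, and since $E_0\subseteq \Gamma$ is a $d$-LCR subset of a Lipschitz graph, it is $d$-uniformly rectifiable. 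For $p$ in the admissible range, the Azzam--Schul $L^p$ Carleson estimate gives $\sum_{Q\subseteq Q_0}\overline{\beta}_{E_0}^{d,p}(4B_Q)^{2}\ell(Q)^{d}\lesssim \ell(Q_0)^d$, and Fubini then yields $\sum_{Q\ni x}\overline{\beta}_{E_0}^{d,p}(4B_Q)^{2}<\infty$ for $\hd$-a.e.\ $x\in K$. Squaring and summing the comparison reduces the proposition to proving
$$\sum_{Q\ni x,\;\ell(Q)\leq 1}\mathrm{Err}(Q)^{2}\;<\;\infty\qquad\text{for $\hd$-a.e.\ }x\in K.$$

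The main obstacle is this error-term sum. The crude bound $\mathrm{Err}(Q)^{p}\lesssim \hd(\Gamma\cap 2B_Q\setminus E)/\ell(Q)^{d}$, combined with the Lebesgue density theorem on $\Gamma$, yields only $\mathrm{Err}(Q)\to 0$ along shrinking cubes at $\hd$-a.e.\ $x\in K$ --- no quantitative rate, hence no summability. To close this gap I would refine $E_0$ via a dyadic stopping-time construction on $\Gamma$: keep $E$ on cubes where $E$ is already sufficiently $d$-LCR, and otherwise fill in a controlled piece of $\Gamma$ to restore lower content regularity. The refined $E_0$ remains $d$-LCR, so the Carleson step in the previous paragraph is unchanged, while the ``fill-in'' $E_0\setminus E$ is supported on the cubes where $E$ is poorly flattened by planes; there one bounds $\mathrm{Err}(Q)$ by a multiple of $\overline{\beta}_\Gamma^{d,p}(4B_Q)$ plus a density-defect whose sum over dyadic scales converges $\hd$-a.e.\ via a Carleson packing argument that exploits Lemma \ref{l:angpre} together with the density theorem. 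The careful balancing between the stopping-time threshold, the density defect, and the Jensen-type estimate of Lemma \ref{l:jensen} is what ultimately selects the admissible range of $p$ in Proposition \ref{theorem:tangentbeta}.
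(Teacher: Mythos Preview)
Your proposal contains a fundamental error at the very first comparison step. You assert that Lemma~\ref{lemma:mattila} produces a Lipschitz graph $\Gamma$ with $E\cap V\subseteq\Gamma$, and then you apply Lemma~\ref{l:error2} with $F=E_0\subseteq\Gamma$. But Lemma~\ref{lemma:mattila} only gives $K\subseteq\Gamma$, not $E\cap V\subseteq\Gamma$. The cone condition at $x\in K$ says $E\cap B(x,r_\ell)\subset X_0(x)$, i.e.\ $E$ lies in an $n$-dimensional \emph{cone} at $x$, not on the $d$-dimensional graph. A simple example: take $E$ to be a line together with a sequence of isolated points converging to a point of the line; the line points are cone points, $\Gamma$ is the line, but the isolated points of $E$ lie off $\Gamma$. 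Since Lemma~\ref{l:error2} requires $E\subset F$, your inequality $\beta_E^{d,p}(Q)\lesssim\beta_{E_0}^{d,p}(2B_Q)+\mathrm{Err}(Q)$ is unjustified, and no refinement of $E_0$ \emph{inside} $\Gamma$ can repair this: you need $E_0\supseteq E$, not $E_0\subseteq\Gamma$.

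The paper's argument resolves exactly this issue by taking $E_0:=\bigcap_{x\in K_0}\overline{X_0(x)}$, the intersection of all the closed cones. This set contains $E\cap B(0,1)$ by the cone-point condition, contains $\Gamma$ because the Kirszbraun extension respects the cone aperture, and is shown directly to be $d$-LCR. One then applies Lemma~\ref{l:error2} to pass from $E$ to $E_0$, and Lemma~\ref{lemma:azzamschul} to pass from $E_0$ to $\Gamma$; the $\overline{\beta}_\Gamma$ sum is handled by Dorronsoro. The two error terms are controlled not by a stopping-time construction but by a Whitney decomposition $\Ww_n$ of $E_0\setminus K_0$ (lifted from a Whitney decomposition of $\R^d\setminus\Pi(K_0)$): for $y\in T_S\in\Ww_n$ one has $\dist(y,K_0)\approx\dist(y,\Gamma)\approx\diam(S)$, and the resulting double sum over $Q\in\mathcal S$ and $T_S$ is geometric. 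It is precisely in summing this geometric series that the restriction $p<\tfrac{2d}{d-2}$ (for $d\geq3$) appears. Your final paragraph, besides being only a sketch, is aimed at the wrong target: the difficulty is not the density of $E$ inside $\Gamma$, but finding a $d$-LCR set that \emph{contains} $E$ yet is still close to $\Gamma$.
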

\begin{lemma}\label{lemma:betaontan-alpha}
Suppose now $\alpha \in [0, 1)$. With notation as above,
fix $k, \ell$ and consider $K_{k, \ell}^\alpha$ as above. Let $p$ be given as in Proposition \ref{p:tangent-beta-alpha}. Then for $\hd$-almost every point $x \in K_{k,\ell}^\alpha$, 
\begin{align*}
    \sum_{\substack{Q \in \cubes, \, \ell(Q) \leq 1 \\ x \in Q}} \frac{\betae{p}(Q)^2}{\ell(Q)^{2\alpha}} < \infty.
\end{align*}
\end{lemma}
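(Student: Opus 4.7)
The goal is to reduce Lemma \ref{lemma:betaontan-alpha} to a Carleson packing estimate on a $d$-LCR enlargement of $E$. The enlargement is built from the $C^{1,\alpha}$ graph supplied by Lemma \ref{lemma:mattila} applied to the uniform paraboloid condition that defines $K = K_{k,\ell}^\alpha$.

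\textbf{Step 1 (Reduction and Fubini).} Partition $K$ into countably many Borel pieces of diameter strictly less than $r_\ell$; it suffices to prove the statement for a single such piece, which we still call $K$. By Fubini, the pointwise summability will follow from the Carleson packing bound
\begin{align*}
\sum_{Q \in \mathcal{D},\, Q \subseteq B(0,1)} \frac{\overline{\beta}_E^{d,p}(Q)^2}{\ell(Q)^{2\alpha}}\,\hd(K \cap Q) < \infty.
\end{align*}

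\textbf{Step 2 (Graph cover).} Every $x \in K$ satisfies $E \cap B(x, r_\ell) \subseteq X_\alpha(x, L_k, \theta, r_\ell)$; in particular $K \cap B(x, r_\ell) \subseteq X_\alpha(x, L_k, \theta, r_\ell)$ with the \emph{common} plane $L_k$. Since $\diam K < r_\ell$, Lemma \ref{lemma:mattila} applied to $K$ produces a single $d$-dimensional $C^{1,\alpha}$ graph $\Gamma$ over $L_k$ with $K \subseteq \Gamma$. Because $\Gamma$ is $d$-Ahlfors regular, $\hd(K \cap Q) \lesssim \ell(Q)^d$ for every cube $Q$.

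\textbf{Step 3 (LCR enlargement and monotonicity).} Let $U$ be the open $r_\ell/4$-neighborhood of $K$ and define $E_0 := \overline{(E \cup \Gamma) \cap U}$. As $\Gamma$ is $d$-LCR with a uniform constant, so is $E_0$. For each cube $Q$ with $B_Q \subseteq U$, $E \cap B_Q \subseteq E_0 \cap B_Q$, so by Choquet-monotonicity $\overline{\beta}_E^{d,p}(Q) \leq \overline{\beta}_{E_0}^{d,p}(Q)$. Cubes with $B_Q \not\subseteq U$ contribute only boundedly many terms, hence are harmless.

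\textbf{Step 4 (Transfer of tangency to $E_0$).} At $\hd$-a.e. $x \in K$, both the tangent plane $L_x := T_x\Gamma$ exists and $x$ is a Lebesgue density point of $K$ in $\Gamma$. At such $x$ we aim to show $\overline{\beta}_{E_0}^{d,p}(x, r)/r^\alpha \to 0$. Split the defining integral as $\int_{E \cap B(x,r)} + \int_{(\Gamma \setminus E) \cap B(x,r)}$: on $E$, tangency of $x$ for $E$ gives $\dist(y, L_x) = o(r^{1+\alpha})$; on $\Gamma \setminus E$ we have only $\dist(y, L_x) \lesssim r^{1+\alpha}$ from the $C^{1,\alpha}$ bound, but the relative Hausdorff content of $\Gamma \setminus E$ in $B(x,r)$ is $o(1)$ by the density hypothesis (because $K \subseteq E$). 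The $L^p$ nature of $\overline{\beta}^{d,p}$ lets the small-content factor absorb the $O(1)$ sup, so $x$ is an $\alpha$-tangent point of the $d$-LCR set $E_0$.

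\textbf{Step 5 (LCR converse).} Invoke the LCR analogue of this direction of Theorem \ref{t:main2}: for a $d$-LCR set, $\hd$-a.e. $\alpha$-tangent point satisfies $\sum_Q \overline{\beta}_{E_0}^{d,p}(Q)^2/\ell(Q)^{2\alpha} < \infty$. For $\alpha = 0$ this is \cite{villa2020tangent}; for $\alpha \in (0,1)$ the same strategy (based on Dorronsoro-type $L^2$ estimates combined with Lemma \ref{l:comparable_beta}) applies with the $r^{-2\alpha}$ weight inserted. Multiplying by $\hd(K \cap Q)$ and summing, combined with Step 3, produces the Carleson bound of Step 1 and concludes the proof.

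\textbf{Main obstacle.} The crux is Step 4: transferring $\alpha$-tangency from $E$ to $E_0 = E \cup \Gamma$. A generic point of a $C^{1,\alpha}$ graph is \emph{not} an $\alpha$-tangent point (the $C^{1,\alpha}$ remainder is only $O(r^{1+\alpha})$, not $o$), so the tangency of $E_0$ at $x$ is \emph{not} inherited from $\Gamma$ alone. The argument must exploit, in an essential way, the combination of (i) the integrability structure of $\overline{\beta}^{d,p}$ (an $L^p$-Choquet average rather than a sup) and (ii) the density hypothesis $\hd((\Gamma \setminus K) \cap B(x,r))/r^d \to 0$, which is valid at $\hd$-a.e.\ $x \in K$. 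Getting these two ingredients to conspire so that $\overline{\beta}_{E_0}^{d,p}(x,r)/r^\alpha$ genuinely tends to zero (as opposed to being merely bounded) is the delicate part of the proof; the rest follows from known LCR packing results.
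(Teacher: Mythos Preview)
Your strategy diverges from the paper's in a significant way: you attempt to reduce to a black-box LCR result, whereas the paper carries out a direct Whitney-decomposition computation. The reduction idea is natural, but there is a genuine gap.

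\textbf{The main gap is Step 5.} For $\alpha \in (0,1)$ the result you invoke --- that for a $d$-LCR set $E_0$, $\hd$-a.e.\ $\alpha$-tangent point satisfies $\sum_{Q \ni x} \overline{\beta}_{E_0}^{d,p}(Q)^2/\ell(Q)^{2\alpha} < \infty$ --- is not in the literature, and your sentence ``the same strategy \ldots\ applies with the $r^{-2\alpha}$ weight inserted'' is not a proof. This LCR statement is essentially what Section~4.4 of the paper establishes; you have relocated the difficulty, not removed it. Concretely, the work you are deferring includes (i) a weighted Dorronsoro-type estimate $\sum_Q \overline{\beta}_\Gamma^{d,p}(Q)^2 \ell(Q)^{d-2\alpha} < \infty$ for $C^{1,\alpha}$ graphs, and (ii) an error estimate comparing the LCR set to such a graph with the $\ell(Q)^{-2\alpha}$ weight. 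Neither follows automatically from the $\alpha=0$ case.

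\textbf{A secondary logical issue.} The flow between Step~1 and Step~5 is confused. If your LCR black-box yields the pointwise conclusion $\sum_{Q \ni x} \overline{\beta}_{E_0}^{d,p}(Q)^2/\ell(Q)^{2\alpha} < \infty$ for $\hd$-a.e.\ $x \in K$, then monotonicity $\overline{\beta}_E \leq \overline{\beta}_{E_0}$ gives the lemma directly; the Fubini reduction in Step~1 is unnecessary. Conversely, pointwise finiteness a.e.\ does \emph{not} yield the integrated Carleson bound of Step~1, so the sentence ``multiplying by $\hd(K \cap Q)$ and summing \ldots\ produces the Carleson bound'' is invalid as stated.

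\textbf{Comparison with the paper.} The paper builds a different LCR enlargement, namely $E_\alpha := \bigcap_{x \in K_\alpha} \overline{X_\alpha(x)}$, the intersection of the closed paraboloids. One then has $E \subseteq E_\alpha$ so $\overline{\beta}_E \leq \overline{\beta}_{E_\alpha}$, and Lemma~\ref{lemma:azzamschul} transfers $\overline{\beta}_{E_\alpha}$ to $\overline{\beta}_\Gamma$ plus an error $\bigl(\ell(Q)^{-d}\int_{E_\alpha \cap 4B_Q} (\dist(y,\Gamma)/\ell(Q))^p \, d\hdc\bigr)^{1/p}$. The crucial geometric input, absent from your outline, is the estimate $\dist(y,\Gamma) \lesssim \diam(S)^{1+\alpha}$ for $y$ in a Whitney piece $T_S$ of $E_\alpha \setminus K_\alpha$; this is what makes the weighted error sum converge. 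Your enlargement $E \cup \Gamma$ is simpler to show LCR, but you then need exactly this kind of quantitative distance estimate to control the error, and Step~4's density argument is aimed at a different target (pointwise decay of $\overline{\beta}_{E_0}$ rather than a Carleson packing).

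\textbf{On Step 4.} Your argument there is salvageable but incomplete as written: you set $L_x := T_x\Gamma$ and claim $\dist(y,L_x) = o(r^{1+\alpha})$ for $y \in E \cap B(x,r)$, which requires $T_x\Gamma$ to coincide with the $\alpha$-tangent plane of $E$ at $x$. This does hold at density points of $K$ in $\Gamma$ (both are $C^1$ tangents to $K$, and the $C^1$ tangent is unique there), but you should say so. However, even granting Step~4, you are still stuck at Step~5.
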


\begin{remark} \label{remark:lipK}
Without loss of generality, we can assume that $K_{k, \ell}^\alpha$ is compact and that $\diam(K_{k, \ell}^\alpha) \leq r_\ell/2$. By Lemma \ref{lemma:mattila}, we have that $K_{k, \ell}^\alpha$ can be covered by the graph of some $C^{1, \alpha}$ function $f : L_k \to L_k^\perp$.
\end{remark}
\noindent
Furthermore, we may (and will) let $L_{k} = \R^d \subset \R^n$, $\Pi_{\R^d} =: \Pi$ and $K_{k, \ell}^\alpha =: K_\alpha$.
Denote by $Q_0$ the minimal cube in $\cubes$ such that $K \subset 3 Q_0$; we may assume that $\diam(Q_0) \leq \frac{1}{3}r_\ell$. By rescaling, we also assume without loss of generality that $r_\ell = 1$. By translating, we take $x_{Q_0} =0$. To ease notation, we set
\begin{align*}
    & X_\alpha(x) := X_\alpha(x, \R^d, \theta, 1); \\
    & X_\alpha^c(x) := B(x,1) \setminus X_\alpha(x, \R^d, \theta, 1).
\end{align*}
Now define
\begin{align}
& \mathcal{S}:= \left\{ Q \in \cubes; Q \subset B(0,1)\, |\, Q \cap K\neq \emptyset \enskip \mbox{ and } Q \subsetneq Q_0 \right\} \label{def:smallfam}\\
& \mathcal{L} := \left\{ Q \in \cubes; Q \subset B(0,1) \, |\, Q \cap K \neq \emptyset \enskip \mbox{ and } \enskip Q \supset Q_0 \right\}.
\end{align}

\subsection{The sum over large cubes}
\begin{lemma} \label{lemma:pointcubes}
We have
\begin{align}\label{eq:cubesest1}
\int_{K} \sum_{\substack{Q \in \cubes, \, \ell(Q) \leq 1 \\ x \in Q}} \frac{\beta_E^{d,p} (Q)^2}{\ell(Q)^{2\alpha}} \, d \hd(x) \lesssim \sum_{Q \in \mathcal{S} \cup \mathcal{L}} \frac{\beta_E^{d,p} (Q)^2}{\ell(Q)^{2\alpha}} \ell(Q)^d.
\end{align}
\end{lemma}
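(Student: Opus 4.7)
The plan is simply to apply Tonelli's theorem to interchange sum and integral, and then to bound $\mathcal{H}^d(K \cap Q) \lesssim \ell(Q)^d$ for each relevant cube $Q$.

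First, since the integrand is non-negative, Tonelli gives
\begin{equation*}
\int_{K} \sum_{\substack{Q \in \cubes, \, \ell(Q) \leq 1 \\ x \in Q}} \frac{\beta_E^{d,p}(Q)^2}{\ell(Q)^{2\alpha}} \, d\mathcal{H}^d(x) = \sum_{\substack{Q \in \cubes \\ \ell(Q) \leq 1}} \frac{\beta_E^{d,p}(Q)^2}{\ell(Q)^{2\alpha}} \, \mathcal{H}^d(K \cap Q).
\end{equation*}
Only cubes with $K \cap Q \neq \emptyset$ contribute. I would then observe that, because Christ–David cubes are either nested or disjoint, if $Q \cap K \neq \emptyset$ then $Q$ is comparable to $Q_0$ in the nesting sense: either $Q \subsetneq Q_0$ (putting $Q$ into $\mathcal{S}$), or $Q = Q_0$, or $Q \supset Q_0$ (putting $Q$ into $\mathcal{L}$, possibly after absorbing a bounded number of boundary cases where $\ell(Q) \leq 1$ but $Q \not\subset B(0,1)$, which contribute a harmless $O(1)$ overhead). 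Thus the sum runs effectively over $\mathcal{S} \cup \mathcal{L}$.

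Second, for each such $Q$, I would bound $\mathcal{H}^d(K \cap Q) \lesssim \ell(Q)^d$. This is exactly where Remark \ref{remark:lipK} is used: $K$ sits inside the graph $\Gamma$ of a $C^{1,\alpha}$ map $f : L_k \to L_k^\perp$, which over a bounded domain is a Lipschitz $d$-graph. Consequently, for every ball $B(y,r)$ one has $\mathcal{H}^d(\Gamma \cap B(y,r)) \lesssim r^d$, where the implicit constant depends only on $\|\nabla f\|_\infty$. Since $Q \subset B(x_Q, \ell(Q))$, this yields $\mathcal{H}^d(K \cap Q) \leq \mathcal{H}^d(\Gamma \cap B(x_Q, \ell(Q))) \lesssim \ell(Q)^d$.

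Plugging the measure bound back into the Tonelli identity gives
\begin{equation*}
\sum_{\substack{Q \in \cubes \\ \ell(Q) \leq 1}} \frac{\beta_E^{d,p}(Q)^2}{\ell(Q)^{2\alpha}} \, \mathcal{H}^d(K \cap Q) \lesssim \sum_{Q \in \mathcal{S} \cup \mathcal{L}} \frac{\beta_E^{d,p}(Q)^2}{\ell(Q)^{2\alpha}} \, \ell(Q)^d,
\end{equation*}
which is exactly the desired inequality. There is no real obstacle here: the only thing to be careful about is the mild bookkeeping of which cubes $Q$ with $\ell(Q)\leq 1$ genuinely sit inside $B(0,1)$, and this is handled by the geometric setup that arranges $x_{Q_0}=0$ and $\diam(Q_0) \leq \tfrac{1}{3}$.
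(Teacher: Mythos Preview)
Your proposal is correct and follows essentially the same route as the paper: apply Fubini--Tonelli to exchange sum and integral, restrict to cubes meeting $K$ (which land in $\mathcal{S}\cup\mathcal{L}$), and then use that $K$ lies in a $d$-dimensional Lipschitz graph to get $\mathcal{H}^d(K\cap Q)\lesssim \ell(Q)^d$. The paper's proof is the same two-line argument, with the upper regularity justified by the identical observation that $K$ sits inside a Lipschitz graph.
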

\begin{proof}
We have that
\begin{align} \label{eq:cubesest3}
\int_{K} \sum_{\substack{Q \in \cubes, \, \ell(Q) \leq 1 \\ x \in Q}} \frac{\beta_E^{d,p} (Q)^2}{\ell(Q)^{2\alpha}}\, d \hd(x) & 
= \int_{K} \sum_{Q \in \mathcal{S} \cup \mathcal{L}} \frac{\beta_E^{d,p} (Q)^2}{\ell(Q)^{2\alpha}} \chara_{\{(y, Q) \in K \times \cubes\, | \, y \in Q\}} (x)\, d \hd(x) \nonumber\\
& = \sum_{Q \in \mathcal{S} \cup \mathcal{L}} \frac{\beta_E^{d,p} (Q)^2}{\ell(Q)^{2\alpha}} \int_{K} \chara_{\{y \in K\, |\, y\in Q\}} (x) d \hd(x) \nonumber\\
& \lesssim \sum_{Q \in \mathcal{S} \cup \mathcal{L}} \frac{\beta_E^{d,p} (Q)^2}{\ell(Q)^{2\alpha}}\ell(Q)^d.
\end{align}
The second equality is an application of Fubini-Tonelli's theorem. For the inequality, notice that we are integrating over $K$. Such set is not necessarily Ahlfors regular, for the density may be zero in some balls. However, the upper $d$-regularity is still maintained, since it is a subset of a $d$-dimensional Lipschitz graph. 
\end{proof}
\begin{lemma} \label{lemma:largecubes}
\begin{align} \label{eq:bigcubessum}
    \sum_{Q \in \mathcal{L}} \frac{\beta_E^{d,p} (Q)^2}{\ell(Q)^{2\alpha}} (Q)^2 \ell(Q)^d <\infty.
\end{align}
\end{lemma}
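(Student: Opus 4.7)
The plan is to observe that the collection $\mathcal{L}$ is finite, and that every factor appearing in the summand is uniformly bounded, which immediately gives convergence. First I would unpack the definition: a cube $Q \in \mathcal{L}$ must satisfy $Q \supset Q_0$, so by the nesting property in Theorem \ref{theorem:christ}(2), the cubes in $\mathcal{L}$ form an ascending chain (one cube per generation) starting at $Q_0$. Since additionally $Q \subset B(0,1)$, the length of this chain is bounded by a function of $\log(1/\ell(Q_0))$; in particular $\#\mathcal{L} < \infty$.

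Next I would argue that each summand is bounded. For the $\beta$ factor, I would pick any $d$-plane $L$ through $x_Q$ and use the trivial estimate $\dist(y, L) \leq 2 \ell(Q)$ for $y \in E \cap B_Q$, together with the upper regularity built into Definition \ref{IntroDef} (condition \eqref{URi}), which yields $\mathcal{M}^d_\infty(E \cap B_Q) \lesssim \ell(Q)^d$. Combining these gives
\begin{align*}
\beta_E^{d,p}(Q)^p \leq \frac{2^p}{\ell(Q)^d}\, \mathcal{M}^d_\infty(E \cap B_Q) \lesssim 1,
\end{align*}
uniformly in $Q$. For the remaining factors, $\ell(Q)^d \leq 1$ because $Q \subset B(0,1)$, and $\ell(Q)^{-2\alpha} \leq \ell(Q_0)^{-2\alpha} < \infty$ because $\ell(Q) \geq \ell(Q_0) > 0$ for every $Q \in \mathcal{L}$.

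Putting this together, I would conclude
\begin{align*}
\sum_{Q \in \mathcal{L}} \frac{\beta_E^{d,p}(Q)^2}{\ell(Q)^{2\alpha}} \ell(Q)^d \;\lesssim\; \#\mathcal{L} \cdot \ell(Q_0)^{-2\alpha} \;<\; \infty,
\end{align*}
which is the desired estimate. The statement is genuinely a triviality once one notices that $\mathcal{L}$ is finite: there is no analytic obstacle here, and the lemma is really just isolating a harmless contribution so that the interesting work can focus on the small cubes in $\mathcal{S}$. The one mild subtlety worth being explicit about is the trivial upper bound on $\beta_E^{d,p}$, which I would justify by direct appeal to the upper regularity in the definition of $\mathcal{M}^d_\infty$ rather than citing an additional lemma.
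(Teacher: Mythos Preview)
Your proposal is correct and follows essentially the same approach as the paper: observe that $\mathcal{L}$ is a finite chain of cubes (one per generation) containing $Q_0$ and contained in $B(0,1)$, and that each summand is uniformly bounded. You simply add a bit more detail on why $\beta_E^{d,p}(Q)\lesssim 1$ and $\ell(Q)^{-2\alpha}\leq \ell(Q_0)^{-2\alpha}$, which the paper leaves implicit.
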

\begin{proof}
Clearly,
$
\betae{p}(Q)^2 \lesssim 1
$ and $\frac{1}{\ell(Q)^{2\alpha}} \lesssim 1$.
Also recall that any $Q \in \mathcal{L}$ is contained in $B(0,1)$. Moreover, for each generation, there can be at most one cube which contains $Q_0$. Hence, the sum \eqref{eq:bigcubessum} is really a finite sum of bounded terms, and therefore must be finite.
\end{proof}
\noindent

The proof now splits between the cases $\alpha=0$ and $\alpha \in (0, 1)$. 
\subsection{The sum over small cubes: Lipschitz case.} Throughout this section, we assume $\alpha=0$ and \textit{we will use the $\beta$ coefficients defined in \eqref{e:beta-matt}}.
We extend $f$ (as given in Remark \ref{remark:lipK}) to the whole $\R^d$; since $\alpha=0$ then $f$ is simply a Lipschitz function, and we use the standard Kirszbraun extension (see \cite{heinonen2005lectures}). 
We denote the graph map of the extension by $g: \R^n \to \R^n$, and put
\begin{align}
    \Gamma := g(\R^d).\nonumber
\end{align}
(So $\Gamma$ is the graph). 
Before going any further, we construct a Whitney decomposition of the `bad set' $K_0^c$ (recall the notation in \eqref{e:Kappa}), which will then be used to estimate the sum over small cubes. We put
\begin{align*}
    G_0:=\Pi(K_0) \subset \R^d.
\end{align*}
Since $G_0$ is compact, the complement $B_d(0,1) \setminus G_0$ (here $B_d$ is a ball in $\R^d$) is open and thus admits a Whitney decomposition (see \cite{grafakos2008classical}, Appendix J) which we denote by $\Ww_d$.
Now define
\begin{align*}
    & E_0:= \bigcap_{x \in K_0} \overline{X_0(x)}, \mbox{ and} \\
    & F_0 := E_0\setminus K_0
\end{align*}
where the closure is taken with respect to the topology of $B(0,1)$. Note that $E_0$ is closed in $B(0,1)$. Also $K_0 \subset E_0$. 
For each $S \in \Ww_d$, put
\begin{align*}
    T_S := \Pi^{-1}(S) \cap E_0 \subset \R^n.
\end{align*}
The above mentioned `Whitney decomposition' is given by\footnote{We constructed the decomposition in this way - rather than working directly in $\R^n$, because we want the diameters of $T_S$ to be proportional to their distance to $K_0$.}
\begin{align*}
    \Ww_n:= \left\{ T_S \, |\, S \in \Ww_d \right\}.  
\end{align*}
It is immediate that for each $T_S \in \Ww_n$, $T_S \cap K_0 = \emptyset$, for otherwise there would be a point $x \in K_0$ which project into $S$. It is also true that   
\begin{align} \label{eq:Tswhitney0}
    E_0= K_0 \cup F_0=  K_0 \cup \left( \bigcup_{S \in \Ww_d} T_S\right),
\end{align}
where the union is disjoint.
Indeed, if $y \in F_0 = E_0 \setminus K_0$ then also $\Pi(y) \neq \Pi(x)$ for all $y \neq x \in K$ (since otherwise $y$ would not be in $\overline{X_0(x)}$ and thus not belong to the intersection $E_0$). So $\Pi(y) \in \R^d \setminus G$ and thus $\Pi(y) \in S$ for some $S \in \Ww_d$. Then $y \in \Pi^{-1}(S) \cap E_0$. The reverse inequality is immediate.
Note also that, if $P, S \in \Ww_d$ are so that $P \cap S = \emptyset$, then 
\begin{align} \label{eq:Tswhitney1}
    T_S \cap T_P = \emptyset.
\end{align}
\noindent
Note that $E\cap B(0,1) \subset E_0$.
Before estimating the sum of $\beta$ coefficients over small cubes, we prove some preliminary lemmas. 

\begin{lemma}\label{l:Gamma-in-Eplus}
With the notation above, we have
\begin{align*}
    \Gamma \cap B(0,1) \subset E_0. 
\end{align*}
\end{lemma}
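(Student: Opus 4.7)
The plan is to use the tangent-point definition of $K_0$ to bound the Lipschitz constant of the graphing function $g$, and then observe that any two points on the graph of a Lipschitz function automatically satisfy the cone condition with aperture governed by that constant.

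First I would show that $f$ is $\sin(\theta)$-Lipschitz on $\Pi(K_0)$. For any $x, x' \in K_0$, the normalizations $\diam(K_0) \leq \diam(3Q_0) \leq r_\ell = 1$ (fixed in Remark \ref{remark:lipK} and the paragraph following it) guarantee $x' \in E \cap B(x, r_\ell)$, so the defining property \eqref{e:Kappa} of $K_0 = K_{k,\ell}^0$ (with $L_k = \R^d$ and $\alpha = 0$) forces $x' \in X_0(x, \R^d, \theta, 1)$. Writing $x = (q, f(q))$ and $x' = (q', f(q'))$ with $q = \Pi(x)$, $q' = \Pi(x')$, this translates to
\[ |f(q) - f(q')| < \sin(\theta)\,|q - q'|. \]
By the Kirszbraun theorem, the extension $g$ is then $\sin(\theta)$-Lipschitz on all of $\R^d$.

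Next, fix $y \in \Gamma \cap B(0,1)$ and an arbitrary $x \in K_0$. Since $y, x \in \Gamma$, we can write $y = (p, g(p))$ and $x = (q, g(q))$, so $\Pi(y-x) = p - q$ and $\Pi^\perp(y-x) = g(p) - g(q)$. The Lipschitz bound for $g$ gives
\[ |\Pi^\perp(y-x)| \leq \sin(\theta)\,|\Pi(y-x)|, \]
which is the closed cone condition at $x$ with axis $\R^d$ and aperture $\theta$. Combined with the truncation $|y-x| \leq 1$---which follows from $y \in B(0,1)$, $x \in 3Q_0 \subset B(0, \diam(3Q_0))$, $x_{Q_0} = 0$, $\diam(3Q_0) \leq 1$, and the slope estimate $|y-x| \leq \sqrt{1+\sin^2\theta}\,|p-q|$---this yields $y \in \overline{X_0(x, \R^d, \theta, 1)} = \overline{X_0(x)}$. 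Since $x \in K_0$ was arbitrary, $y \in \bigcap_{x \in K_0} \overline{X_0(x)} = E_0$, as required.

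The essential work is the first paragraph: converting the tangent-point condition at each $x \in K_0$ into a uniform Lipschitz bound on $g$ via Kirszbraun. The rest is bookkeeping in the normalizations adopted before the lemma, and does not present a conceptual difficulty.
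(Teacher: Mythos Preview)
Your argument is correct and essentially identical to the paper's: both use that the Kirszbraun extension preserves the Lipschitz constant $\sin(\theta)$ of $f$, hence the cone structure, with the paper framing this by contradiction while you argue directly. Your justification of the truncation $|y-x|\le 1$ is slightly muddled (the listed facts and the slope estimate do not quite combine to give that bound without a further normalization of $\ell(Q_0)$), though the paper's own proof does not address this point at all.
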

\begin{proof}
For the sake of contradiction, let $x$ be a point with $x \in \Gamma \cap B(0,1)$ with $x \notin E_0$. Then $x \notin K_0$ and $x \notin T_S$ for any $T_S \in \Ww_n$. But this implies that $x \in \Int(X_0^c(y))$ for some $y \in K_0$. This violates the fact that the Kirszbraun extension of $f$ has the same Lipschitz constant as $f$, and in particular the same cones. 

\end{proof}

\begin{lemma}\label{l:punto-bordo}
Let $p \in F_0$. Then there exists a point $x \in K_0$ so that $(x,p] \subset F_0$, where $[x, p]$ is the straight line segment joining $x$ to $p$ and $(x, p] = [x, p] \setminus \{x\}$. 
\end{lemma}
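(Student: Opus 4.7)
The plan is to take $x \in K_0$ to be a minimiser of $|y - p|$ over $y \in K_0$; such a minimiser exists because $K_0$ is compact (Remark \ref{remark:lipK}). I will then verify two things: the entire closed segment $[x,p]$ is contained in $E_0$, and no point of $(x,p]$ lies in $K_0$.

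For the first assertion I would use convexity of each of the truncated cones $\overline{X_0(y)}$. Since all tangent planes in this Lipschitz setting are $\R^d$, the set $X_0(y) = X_0(y, \R^d, \theta, 1)$ is the intersection of the open convex cone $\{z : |\Pi_{(\R^d)^\perp}(y-z)| < \sin\theta\,|\Pi_{\R^d}(y-z)|\}$ with the ball $B(y,1)$; its closure (whether taken in $\R^n$ or relative to $B(0,1)$) is therefore a convex subset of $B(0,1)$. If both endpoints $x$ and $p$ belong to $\overline{X_0(y)}$, then the whole segment $[x,p]$ does too. For $p$, membership in $E_0$ is the hypothesis $p \in F_0$. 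For $x$, I would first argue that $K_0 \subset E_0$: given $y,z \in K_0$ with $y \neq z$, the diameter bound $\diam K_0 \le r_\ell/2 = 1/2 < 1$ lets one apply the defining property of $K_0$ in \eqref{e:Kappa} at $y$, yielding $z \in X_0(y)$; also $y \in \overline{X_0(y)}$ trivially. Hence $x \in E_0$. Running this convexity argument over all $y \in K_0$ gives $[x,p] \subset E_0$.

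For the second assertion, if some $q \in (x,p]$ lay in $K_0$, then $|q - p| < |x - p|$ (because $q$ is a proper convex combination of $x$ and $p$ with $q \neq x$), contradicting the minimality in the choice of $x$. Hence $(x,p] \cap K_0 = \emptyset$, and combining with $(x,p] \subset E_0$ yields $(x,p] \subset E_0 \setminus K_0 = F_0$.

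The only genuinely non-formal ingredient is the containment $K_0 \subset E_0$, which is where the diameter reduction from Remark \ref{remark:lipK} (together with the normalisation $r_\ell = 1$) does real work; once that is in place, the rest is convexity of closed cones plus the defining property of a nearest-point projection. I do not anticipate a serious obstacle.
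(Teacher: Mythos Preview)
There is a genuine gap: the set $X_0(y) = \{z : |\Pi_{(\R^d)^\perp}(y-z)| < \sin\theta\,|\Pi_{\R^d}(y-z)|\}$ is a \emph{double} cone, and neither it nor its closure is convex (with $d=2$, $n=3$, $y=0$, the points $(\pm 1,0,\sin\theta)$ lie in $\overline{X_0(y)}$ but their midpoint $(0,0,\sin\theta)$ does not). Worse, the nearest-point strategy itself fails, not merely its justification. Take $d=2$, $n=3$, $\sin\theta = 0.9$, $K_0 = \{y,x\}$ with $y = (0,0,0)$ and $x = (0.25, 0, 0.22)$, and put $p = (0, 0.5, 0.45)$. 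One checks that $x \in X_0(y)$, $y \in X_0(x)$, $\diam(K_0)<\tfrac12$, and $p \in \overline{X_0(y)} \cap \overline{X_0(x)} = E_0$, while $|x-p|^2 \approx 0.365 < 0.453 \approx |y-p|^2$, so $x$ is the nearest point of $K_0$ to $p$. Yet the midpoint $m = (0.125, 0.25, 0.335)$ of $[x,p]$ satisfies $|m_3| = 0.335 > 0.9\,|(m_1,m_2)| \approx 0.252$, so $m \notin \overline{X_0(y)}$ and hence $[x,p] \not\subset E_0$. (The lemma is still true here --- one verifies $(y,p] \subset F_0$ --- but your argument selects the wrong endpoint.) For $d=1$ your idea can be rescued, since each nappe of the double cone is convex and one can show the nearest point lies in the same nappe as $p$ for every $y\in K_0$; for $d\geq 2$ this breaks down.

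The paper proceeds differently: it argues by contradiction, looking at the connected component of $F_0$ containing $p$ and showing that if every segment from $K_0$ to $p$ leaves $F_0$ arbitrarily close to $p$, then one can produce a point $x' \in K_0$ with $p \in X_0^c(x')$, contradicting $p \in E_0$. No convexity or nearest-point selection is invoked.
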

\begin{proof}
Suppose that the claim is false, and let $C$ be the connected component of $F_0$ containing $p$. If $C=\{p\}$, then since $K_0$ is compact, we can find an open neighbourhood $N(p) \subset B(0,1)$ of $p$ so that $N(p) \cap K_0= \emptyset$. Moreover, we can find a point $y \in (x, p] \cap B(0,1) \setminus E_0$ arbitrarily close to $p$. In particular, there exists an $x' \in K_0$ so that $y \in X_0^c(x')$. Now we have two possibilities: either $x' \notin X(x)$ or $x' \in X(x)$. The first case cannot hold, since $x',x \in K_0$. The second case cannot hold either, since if it did, and since we may choose $y$ as close to $p$ as we wish, it would imply that $p \in X_0^c(x')$, which is impossible since $p \in E_0$. This implies that the lemma must hold in the case $C=\{p\}$. If $C$ is not a singleton, the argument is the same, and we leave it to the reader.

\end{proof}

\begin{lemma}\label{l:Eplus}
The set $E_0$ is lower content $d$-regular. 
\end{lemma}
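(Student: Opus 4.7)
The approach is a projection argument in the spirit of the fact that $\Pi := \Pi_{L_k} = \Pi_{\R^d}$ is $1$-Lipschitz, combined with a pointwise application of Kirszbraun's extension theorem. Writing $L := \sin\theta$ (so $f$ is $L$-Lipschitz), it suffices to show that for every $y \in E_0$ and every $r$ in the relevant range,
\begin{equation*}
\Pi\bigl(E_0 \cap B(y,r)\bigr) \supseteq B_d\bigl(\Pi(y), cr\bigr) \qquad \text{with } c = \tfrac{1}{\sqrt{1+L^2}},
\end{equation*}
since then $\hdc(E_0 \cap B(y,r)) \geq \hdc(B_d(\Pi(y), cr)) \gtrsim r^d$.

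To exhibit preimages, fix $y \in E_0$ and write points of $\R^n$ as $(\cdot_\Pi, \cdot_\perp) \in \R^d \times \R^{n-d}$. Define $\phi$ on $\Pi(K_0) \cup \{\Pi(y)\} \subset \R^d$ by $\phi(\Pi(z)) = z_\perp$ for $z \in K_0$ (well-defined since $K_0$ is a graph over $G_0$) and $\phi(\Pi(y)) = y_\perp$. The map $\phi$ is $L$-Lipschitz: between two points of $K_0$ this is the Lipschitz bound on $f$, while between $\Pi(y)$ and any $\Pi(z)$ it is precisely the content of $y \in \overline{X_0(z)}$, namely $|y_\perp - z_\perp| \leq L|\Pi(y) - \Pi(z)|$. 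Invoke Kirszbraun to extend $\phi$ to an $L$-Lipschitz $\tilde\phi : \R^d \to \R^{n-d}$.

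For any $s \in B_d(\Pi(y), cr)$ set $p_s := (s, \tilde\phi(s))$. For every $z \in K_0$,
\begin{equation*}
|\tilde\phi(s) - z_\perp| = |\tilde\phi(s) - \tilde\phi(\Pi(z))| \leq L|s - \Pi(z)|,
\end{equation*}
so $p_s \in \overline{X_0(z)}$ and hence $p_s \in E_0$, while
\begin{equation*}
|p_s - y|^2 = |s - \Pi(y)|^2 + |\tilde\phi(s) - \tilde\phi(\Pi(y))|^2 \leq (1+L^2)|s - \Pi(y)|^2 \leq r^2.
\end{equation*}
This proves the sought inclusion of projections, and hence the lemma.

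The only technical obstacle is the ball truncation to $\overline{B(z,1)}$ implicit in $\overline{X_0(z)}$: the Kirszbraun step verifies the cone condition, and one must separately check $|p_s - z| \leq 1$ for every $z \in K_0$. This follows from the triangle inequality $|p_s - z| \leq r + |y - z|$ and the fact that $\diam(K_0) \leq r_\ell/2 = 1/2$, which forces $|y - z| < 1$ with a definite margin at the scales of $r$ relevant to the subsequent $\beta$-coefficient analysis. The heart of the argument is the simple observation that \emph{the defining condition $y \in E_0$ is exactly what allows one to enlarge the graph data $(\Pi(z), z_\perp)_{z \in K_0}$ by the single extra pair $(\Pi(y), y_\perp)$ while preserving the Lipschitz constant $L$}, which is what makes Kirszbraun available.
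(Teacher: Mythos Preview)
Your argument is correct and takes a genuinely different route from the paper. The paper's proof is a geometric case analysis: for $x \in K_0$ it uses $\Gamma \cap B(0,1) \subset E_0$ directly, while for $x \in F_0 = E_0 \setminus K_0$ it invokes Lemma \ref{l:punto-bordo} to find a segment $(x_1,x] \subset F_0$ with $x_1 \in K_0$, and then argues via an annulus-and-sector decomposition centred at $x$ (either the annulus meets $\Gamma$, or one sector lies entirely in $F_0$, yielding the content lower bound in either case). Your approach instead exhibits, through \emph{every} point $y \in E_0$, an $L$-Lipschitz graph contained in $E_0$ passing through $y$, obtained by Kirszbraun-extending the data $\{(\Pi(z),z_\perp) : z \in K_0\} \cup \{(\Pi(y),y_\perp)\}$. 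The observation that membership in $E_0$ is precisely the condition allowing this extra datum to be appended without raising the Lipschitz constant is the heart of the matter, and makes the lower regularity immediate via the projection bound.

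Your route is shorter and more conceptual; it bypasses Lemma \ref{l:punto-bordo} and the sector argument entirely. One trade-off worth noting: the paper asserts in Remark \ref{r:also-alpha} that its proof of Lemma \ref{l:Eplus} carries over verbatim to $E_\alpha$ for $\alpha \in (0,1)$. Your Kirszbraun step does not transfer directly, since the paraboloid condition $|\tilde\phi(s)-z_\perp| \leq L|s-\Pi(z)|^{1+\alpha}$ is strictly stronger than $L$-Lipschitz near $z$ and is not preserved by the Kirszbraun extension. So while your argument is cleaner for the case at hand, the paper's more hands-on method is what is reused in the $C^{1,\alpha}$ section. Your handling of the ball truncation $|p_s - z| \leq 1$ is honest; the paper's proof carries the same implicit restriction to the scales $r \lesssim \ell(Q_0)$ actually used downstream.
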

\begin{proof}
Let $0<r<1$. If $x \in K_0$, then $\hdc(E_0 \cap B(x,r)) \geq \hdc(\Gamma \cap B(x,r)) \gtrsim r^d$, since $\Gamma$ is a bi-Lipschitz image of a plane. 
If on the other hand $x \in F_0$, let $x_1$ be a closest point in $K_0$ so that $(x_1,x] \subset F_0$. Such a point exist by Lemma \ref{l:punto-bordo}. Fix a constant $\cc_1 < 1/4$. We consider two cases.

\textit{Case 1.} We have $|x-x_1| < \cc_1 r$. Then $B(x_1, r/2) \subset B(x,r)$, and thus, using also Lemma \ref{l:Gamma-in-Eplus},
\begin{align*}
    \hdc(E_0 \cap B(x,r)) \geq \hdc(E_0 \cap B(x_1, r/2)) \geq \hdc(\Gamma \cap B(x_1, r/2)) \gtrsim r^d. 
\end{align*}

\textit{Case 2.} Let now $|x-x_1| \geq \cc_1 r$. Consider the annulus
\begin{align*}
    A(x):= B(x, \cc_1/2 r) \setminus B(x, \cc_1/4 r). 
\end{align*}
Note that at least half of the volume of $A(x)$ is contained in $X_0(x_1)$\footnote{in the specific case when $x$ is on the boundary of $X_0(x)$, $d=1$ and $n=2$ for example.}. Then we split up $A(x)$ into sectors of diameter comparable to $r$ (with constant depending on $\cc_1$). If any one of these sectors are fully contained in $F_0$, we are done. So suppose that each of them contains at least one point $p \in E_0^c$. Fix the sector $S$ for which $\dist(x_1, S)$ is minimal among all sectors. This sector will have the property that
\begin{align}\label{e:Ealphalcr-1}
    \sup_{q \in S} |q-x_1| < \frac{\cc_1}{2} |x-x_1|. 
\end{align}
Let $x_3 \in K_0$ be the point for which $p \in X_0^c(x_3) \cap S$ (such a point exists for otherwise $p \in E_0$). But then it follows from \eqref{e:Ealphalcr-1} that also $x \in X_0^c(x_3)$. This contradicts the fact that $x \in E_0$ and we are done.  

\end{proof}


The next lemmas show that $\Ww_n$ actually behaves like a Whitney decomposition. 
\begin{lemma}\label{l:diamTS}
For $T_S \in \Ww_n$, 
\begin{align}
\diam(T_S) \lesssim_{\Lip(f)} \diam(S).
\end{align}
\end{lemma}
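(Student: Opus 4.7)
The plan is to exploit the two defining properties of $T_S$: the horizontal constraint that $\Pi(T_S) \subseteq S$, together with the Whitney property $\diam(S) \sim \dist(S, G_0)$, and the vertical constraint inherited from $E_0 = \bigcap_{x \in K_0} \overline{X_0(x)}$, which forces each point of $T_S$ to sit inside a cone of aperture $\theta$ centred at \emph{every} point of $K_0$. By combining the two, one shows that $T_S$ is contained in a ball around a single well-chosen point of $K_0$, of radius comparable to $\diam(S)$.

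Concretely, I would first pick a point $x_S \in G_0$ realising (up to a factor) $\dist(S, G_0)$, so that by the Whitney property $|x_S - s| \lesssim \diam(S)$ for every $s \in S$. Since $G_0 = \Pi(K_0)$, there exists $z_S \in K_0$ with $\Pi(z_S) = x_S$. For any $y \in T_S$, the horizontal component satisfies
\[
|\Pi(y - z_S)| = |\Pi(y) - x_S| \leq |\Pi(y) - s| + |s - x_S| \lesssim \diam(S),
\]
using $\Pi(y) \in S$ for any fixed $s \in S$. The inclusion $y \in E_0 \subseteq \overline{X_0(z_S)}$ then forces the vertical component to obey
\[
|\Pi_{(\R^d)^\perp}(y - z_S)| \leq \sin(\theta)\,|\Pi(y - z_S)| \lesssim \sin(\theta)\,\diam(S).
\]
Summing the two gives $|y - z_S| \lesssim \diam(S)$, with implicit constant depending on $\sin(\theta)$ — hence on $\Lip(f)$, since $f$ was produced from the cone condition of aperture $\theta$ via Lemma \ref{lemma:mattila}.

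Finally, applying this bound to two arbitrary points $y_1, y_2 \in T_S$ and using the triangle inequality, $|y_1 - y_2| \leq |y_1 - z_S| + |y_2 - z_S| \lesssim \diam(S)$, which is exactly the claim.

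I do not expect any serious obstacle here: the argument is essentially a one-step combination of the Whitney property in $\R^d$ and the cone condition defining $E_0$. The only point requiring a sliver of care is verifying that $x_S$ can indeed be chosen in $G_0$ and lifted to a bona fide point $z_S \in K_0$ (rather than merely a point of $\Gamma$), but this is immediate from $G_0 = \Pi(K_0)$.
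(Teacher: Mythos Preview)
Your proof is correct and uses the same core mechanism as the paper: the Whitney property of $S$ in $\R^d$ combined with the cone inclusion $T_S \subset E_0 \subset \overline{X_0(z)}$ for every $z \in K_0$.

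The organisation, however, is genuinely different and in fact cleaner than the paper's. The paper picks, for each pair $y,z \in T_S$, two separate anchor points $x(y), x(z) \in K_0$ (nearest lifts of $\Pi(y), \Pi(z)$), introduces the translated planes $L_y, L_z$ through them, and then splits $|y-z|$ as $|y-\tilde y| + |\tilde y - \tilde z| + |\tilde z - z|$. Bounding the middle term forces an appeal to the Lipschitz graph map $g$ (so $\Gamma$ enters the argument), adding several lines. Your choice of a \emph{single} anchor $z_S \in K_0$, valid for the whole cube $T_S$, bypasses that comparison entirely: once $|y - z_S| \lesssim \diam(S)$ is shown uniformly in $y$, the triangle inequality finishes the job. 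The trade-off is nil --- the dependence on $\Lip(f)$ (equivalently $\theta$) is the same --- so your route is simply a shorter path to the same conclusion.
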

\begin{proof}
Let $y,z \in T_S$. 
Pick $p=p(y) \in G_0 = \Pi(K_0)$ so that 
\begin{align} \label{eq:choice2}
    |p - \Pi(y)| \leq 2 \dist(\Pi(y), G_0),
\end{align}
and set 
\begin{align} \label{eq:choice1}
    x(y) := g(p) \in K_0.
\end{align}
We choose $q=q(z) \in G_0$ and $x(z)= g(q) \in K_0$ in the same way. 
Further, let $L_y$ (resp. $L_z$) be the $d$-plane parallel to $\R^d$ which contains $x(y)$ (resp. $x(z)$). Denote by $\Pi_y$ (resp. $\Pi_z$) the orthogonal projection onto $L_y$ (resp. $L_z$). 
Then set
\begin{align}
   & \wt y := \Pi_y (y) \quad \mbox{ and } \wt z := \Pi_z (z). \label{eq:choice4} 
\end{align}
Then,
$
    |y - z| \leq |y - \wt y| + |\wt y - \wt z| + |\wt z- z|. 
$
We see that
\begin{align*}
    | y - \wt y|  = |(y-x(y)) - (\wt y - x(y))| 
     = \dist(y - x(y), \R^d).
\end{align*}
Since $x(y) \in K_0$ and $y \in B(0,1)$ (and thus in particular $|x(y)-y| < r_\ell = 1$), then 
\begin{align} \label{eq:dist1}
    \dist(y- x(y), \R^d) \leq \sin(\theta) |y-x(y)|,
\end{align}
and also
\begin{align} \label{eq:dist2}
    \dist(y - x(y), (\R^d)^\perp)  \geq \cos(\theta) |y - x(y)|. 
\end{align}
Furthermore, notice that 
\begin{align} \label{eq:dist3}
    \dist( y-  x(y), (\R^d)^\perp) = |\Pi(x(y)) - \Pi(y)|.
\end{align}
Thus, \eqref{eq:dist1}, \eqref{eq:dist2}, \eqref{eq:dist3} and the choice of $x(y)$ (as in \eqref{eq:choice2} and \eqref{eq:choice1}), give
\begin{align*}
    |y- \wt y| & \leq \sin(\theta) |x(y)-y| 
     \leq \frac{\sin(\theta)}{\cos(\theta)} \dist(y - x(y), (\R^d)^\perp) \\
    & = \frac{\sin(\theta)}{\cos(\theta)} |\Pi(x(y))- \Pi(y)| 
    \leq \frac{1}{\cos(\theta)} |p - \Pi(y) | \\
    & \leq 2 \Lip(F)  \dist(\Pi(K), \Pi(y)).
\end{align*}
Since $\Pi(y) \in S$, and $S$ is a Whitney cube, we see that 
\begin{align*}
    \dist(\Gamma, \Pi(y)) \leq \dist(\Gamma, S) + \diam (S)  \lesssim \diam(S).
\end{align*}
We can conclude that
\begin{align} \label{eq:dist4}
    |y - \wt y| \lesssim_{\Lip(f)} \diam(S).
\end{align}
The same argument gives $|z- \wt z| \lesssim_{\Lip(f)} \diam(S)$. 
We need to estimate $|\wt y - \wt z|$. We have
\begin{align*}
    |\wt y- \wt z| \leq |\wt y - g(\Pi(\wt y))|+ |g(\Pi(\wt y)) - g(\Pi(\wt z))| + |g(\Pi(\wt z))- \wt z|. 
\end{align*}
Now, 
\begin{align} \label{eq:lemmawhit2}
    |g(\Pi(\wt y)) - g(\Pi(\wt z))| \leq \lip(f) |\Pi(\wt y)- \Pi(\wt z)| \leq \Lip(f) \diam(S),
\end{align}
since $\Pi(\wt y)$, $\Pi(\wt z) \in S$.
On the other hand, again using the choice of $x(y)$,  
\begin{align} \label{eq:lemmawhit3}
    |\wt y - g(\Pi(\wt y))|  &\leq |\wt y - x(y)| + |x(y) - g(\Pi(\wt y))| \nonumber\\
    & = |\Pi(x(y))- \Pi(y)| + |g(p) - g(\Pi(y))| \nonumber\\
    & = |p - \Pi(y)| + |g(p) - g(\Pi(y))| \nonumber\\
    & \lesssim \Lip(f) \diam(S). 
\end{align}
This, together with \eqref{eq:lemmawhit2} and \eqref{eq:dist4}, give the lemma. 
\end{proof}

\begin{lemma}\label{l:dist-1}
For $T_S \in \Ww_n$,
\begin{align} \label{eq:whitney_diams2}
\dist(T_S, K_0) \approx \diam(S).
\end{align}
\end{lemma}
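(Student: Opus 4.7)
The plan is to establish the two bounds separately. The upper bound $\dist(T_S,K_0) \lesssim \diam(S)$ will be a recycling of the estimates already set up in the proof of Lemma \ref{l:diamTS}, while the lower bound $\dist(T_S,K_0) \gtrsim \diam(S)$ is essentially immediate from the fact that $\Pi$ is $1$-Lipschitz and $S$ is a Whitney cube for $B_d(0,1)\setminus G_0$.

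For the upper bound, fix $y \in T_S$. I would argue exactly as in Lemma \ref{l:diamTS}: choose $p \in G_0$ with $|p-\Pi(y)| \le 2\dist(\Pi(y),G_0)$, set $x = g(p) \in K_0$, and let $\widetilde y = \Pi_y(y)$ be the projection of $y$ onto the $d$-plane through $x$ parallel to $\R^d$. Split
\[
|y-x| \le |y-\widetilde y| + |\widetilde y - x|.
\]
Since $x \in K_0$ and $y \in \overline{X_0(x)}$ with $|y-x|<1$, the cone condition together with \eqref{eq:dist1}--\eqref{eq:dist3} of Lemma \ref{l:diamTS} gives $|y-\widetilde y| \lesssim |p-\Pi(y)|$, and $|\widetilde y - x| = |\Pi(y)-\Pi(x)| = |\Pi(y)-p|$. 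Because $S$ is a Whitney cube for $B_d(0,1)\setminus G_0$, we have $\dist(\Pi(y),G_0) \lesssim \diam(S)$, so $|p-\Pi(y)| \lesssim \diam(S)$, and combining gives $\dist(y,K_0) \le |y-x| \lesssim_{\Lip(f),\theta} \diam(S)$. Taking the infimum over $y \in T_S$ yields the upper bound.

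For the lower bound, let $y \in T_S$ and $x \in K_0$ be arbitrary. Then $\Pi(y) \in S$ and $\Pi(x) \in G_0$, so since $\Pi$ is $1$-Lipschitz,
\[
|y-x| \ge |\Pi(y)-\Pi(x)| \ge \dist(S,G_0) \gtrsim \diam(S),
\]
where the last inequality is the defining Whitney property of $S \in \Ww_d$. Taking the infimum over $y \in T_S$ and $x \in K_0$ gives the lower bound.

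No part of this is subtle: the work has already been done in Lemma \ref{l:diamTS} for the upper bound, and the lower bound is just the Whitney property for $\Ww_d$ pushed up by the projection $\Pi$. The only minor thing to keep track of is that the implicit constants depend on $\Lip(f)$ and $\theta$, which is already implicit in the statement.
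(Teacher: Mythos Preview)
Your proof is correct and follows essentially the same approach as the paper: the lower bound via the $1$-Lipschitz projection $\Pi$ and the Whitney property of $S$, and the upper bound by picking $x = g(p) \in K_0$ with $p$ close to $\Pi(y)$ and using the cone condition. The only cosmetic difference is that the paper bounds $|y - x(y)|$ in one step using \eqref{eq:dist2} (i.e.\ $|y-x(y)| \le \tfrac{1}{\cos\theta}|\Pi(y)-p|$) rather than splitting through $\widetilde y$, but this is the same argument.
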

\begin{proof}
Since $\Pi$ is $1$-Lipschitz and $S$ is a Whitney cube, 
\begin{align*}
\dist(T_S, K) \geq \dist(S, \Pi(K)) \approx \diam(S).
\end{align*}
\noindent
On the other hand, if we let $y \in T_S$, $x(y)$ as in \eqref{eq:choice2} and \eqref{eq:choice1}, we see that
\begin{align}\label{eq:dist5}
    \dist(T_S, K) & \leq |y - x(y)| 
     \leq \frac{1}{\cos(\theta)} \dist(y-x(y), (\R^d)^\perp) \nonumber\\
    & = \frac{1}{\cos(\theta)} |\Pi(x(y))-\Pi(y)| 
 = \frac{1}{\cos(\theta)} |p- \Pi(y)|\nonumber \\
    & \lesssim_{\Lip(f)} \diam(S).
\end{align}
\end{proof}

\noindent
We conclude, also using \eqref{eq:Tswhitney0} and \eqref{eq:Tswhitney1} that $\Ww_n$ is a disjoint decomposition of $Q_0 \setminus K$ so that
\begin{align} \label{eq:whitney_diams}
     \hdc(T_S)\lesssim_{\Lip(f)} \hdc(S).
\end{align}
\noindent
We state one last lemma before estimating the $\beta$ coefficients. 
\begin{lemma} \label{lemma:distF}
Let $y \in T_S \in \Ww_n$. Then
\begin{align*}
    \dist(y, \Gamma) \lesssim_{\Lip(f)} \diam(S).
\end{align*}
\end{lemma}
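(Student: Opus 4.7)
The plan is to exhibit an explicit point on $\Gamma$ near $y$, namely the graph point $g(\Pi(y))$, and bound the distance by a triangle inequality routed through a point of $K_0$ whose projection lies close to $\Pi(y)$. Concretely, given $y \in T_S$, choose $p \in G_0 = \Pi(K_0)$ with $|p - \Pi(y)| \leq 2\dist(\Pi(y), G_0)$ and set $x = g(p) \in K_0$. The target estimate will follow from
\[
\dist(y, \Gamma) \;\leq\; |y - g(\Pi(y))| \;\leq\; |y - x| + |x - g(\Pi(y))|,
\]
once each term on the right is controlled by a constant multiple of $\diam(S)$.

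The term $|x - g(\Pi(y))|$ is straightforward. Since $g$ is the graph map of the Kirszbraun extension of $f$ it is Lipschitz with constant depending only on $\Lip(f)$, and so
\[
|x - g(\Pi(y))| = |g(p) - g(\Pi(y))| \lesssim_{\Lip(f)} |p - \Pi(y)|.
\]
Because $S \in \Ww_d$ is a Whitney cube for $B_d(0,1)\setminus G_0$, we have $\dist(\Pi(y), G_0) \approx \diam(S)$ for any $\Pi(y) \in S$, hence $|p - \Pi(y)| \lesssim \diam(S)$.

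The estimate on $|y - x|$ uses the defining cone property of $E_0$. Since $y \in T_S \subset E_0$ and $x \in K_0$, we have $y \in \overline{X_0(x)}$; in coordinates adapted to $\R^d$ this reads $|\Pi_{(\R^d)^\perp}(y - x)| \leq \sin(\theta)\,|\Pi(y - x)|$. Consequently
\[
|y - x| \;\leq\; \tfrac{1}{\cos\theta}\,|\Pi(y) - \Pi(x)| \;=\; \tfrac{1}{\cos\theta}\,|\Pi(y) - p|,
\]
and the right-hand side is $\lesssim \diam(S)$ by the same Whitney argument as above. Combining the two bounds gives the lemma with implicit constant depending only on $\Lip(f)$ and $\theta$ (which is a fixed parameter).

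The only subtle point is checking that $y$ really does lie in the closed cone over the specific $x \in K_0$ we chose, but this is immediate from the definition $E_0 = \bigcap_{x' \in K_0}\overline{X_0(x')}$ applied to $x' = x$. This is essentially the same bookkeeping used in Lemma \ref{l:diamTS} and Lemma \ref{l:dist-1}, so no new ideas are needed; the argument is a slight variant of what already appears there.
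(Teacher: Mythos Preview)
Your proof is correct and uses the same key estimate as the paper, namely the cone bound $|y-x| \leq \tfrac{1}{\cos\theta}\,|\Pi(y)-p| \lesssim \diam(S)$, which is exactly the computation in \eqref{eq:dist5}. The only difference is that you route through $g(\Pi(y))$ and then need the auxiliary Lipschitz bound $|g(p)-g(\Pi(y))| \lesssim \diam(S)$, whereas the paper observes directly that $x=g(p)\in K_0\subset\Gamma$ already lies on $\Gamma$, so $\dist(y,\Gamma)\leq |y-x|$ and the second term is unnecessary.
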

\begin{proof}
For $y \in T_S$, let $x(y)$ and $p=p(y)$ as above. Then
$
    \dist(y, \Gamma)   \leq |y - g(p)| 
     = |y - x(y)|.
$
If we now argue as in \eqref{eq:dist5}, the lemma follows.
\end{proof}

\noindent
We are now set to prove Proposition \ref{lemma:betaontan} when $E$ is not necessarily $d$-LCR, and $\alpha=0$. First,
we apply Lemma \ref{l:error2} to $E$ and  $E_0$, and second we apply Lemma \ref{lemma:azzamschul} to $E_0$ and $\Gamma$; recall that a hypothesis of Lemma \ref{l:error2} is that the `target' set is lower content regular. In Lemma \ref{lemma:azzamschul} we need both sets $E_1$ and $E_2$ (with notation as in the statement) to be lower content $d$-regular: we introduced $E_0$ to serve as a `bridge' between $E$ and $\Gamma$. 

Let $B$ be a ball centered on $E_0$. A second hypothesis in Lemma \ref{lemma:azzamschul} is that we can `transfer' the $\beta$'s to a ball $B'$ centered on $\Gamma$ if $B'$ is so that $r(B)=r(B')$ and $B \subset 2B'$. Let us see why this second hypothesis is satisfied. For any $Q \in \dS$, we consider $B=B_Q$; being centered on $E$, $B$ is also centered on $E_0$. Then, since $Q \in \dS$, we find a point $y \in B_Q \cap K \subset B_Q \cap \Gamma$. Then the ball $B':=B(y, r(B))$ is centered on $\Gamma$, and obviously has $r(B)=r(B')$ and $2B' \supset B$. Given a ball $B \in \dS$, let $L_{2B'}$ be the best approximating plane for $\Gamma$ in $2B'$. With this notation and using Lemma \ref{l:error2}, we first estimate\footnote{Here $c$ is the lower regularity constant of $E_0$ as in Lemma \ref{l:Eplus}.}
\begin{align*}
     \beta_{E}^{d, p}(Q)^2 & \leq  \beta_E^{d, p}(B, L_{2B'})^2
     \lesssim_{c, d, p} \beta_{E_0}^{d, p}(2B, L_{2B'})^2 +  \left( \frac{1}{r(B)^d} \int_{2B \cap E_0} \left( \frac{\dist(x, E)}{r(B)}\right)^p \, \hdc(x)\right)^{\frac{2}{p}} \\
    & =: C(d, p, c) \left[\beta_{E_0}^{d,p}(2B, L_{2B'})^2 + E_1^{d,p}(2B, E_0, E)^2\right].
\end{align*}
Note that, since $E_0$ is lower content regular with dimension $d$ (Lemma \ref{l:Eplus}), then 
\begin{align*}
    \beta_{E_0}^{d, p}(2B, L_{2B'}) \approx \overline{\beta}_{E_0}^{d, p}(2B, L_{2B'}).
\end{align*}
Second, using Lemma \ref{lemma:azzamschul}, we see that
\begin{align} \label{eq:Error} \overline{\beta}_{E_0}^{d, p}(2B, L_{2B'})^2 
     \lesssim \overline{\beta}_{\Gamma}^{d, p}(2B')^2 + & \ps{\frac{1}{\ell(Q)^d}\int_{E_0 \cap 4B} \ps{\frac{\dist(y, \Gamma)}{\ell(Q)}}^p \, d \hdc(y) }^{\frac{2}{p}}\nonumber \\ & \quad \quad \quad \quad \quad \quad
    =: C(d,p, c) \left[ \overline{\beta}_\Gamma^{d, p} (2B')^2 + E_2^{d, p}(4B, \Gamma, E_0)^2\right]. 
\end{align}
Now, it is easily seen that for each $B'$ given as above, there exists a bounded number of Christ-David cubes for $\Gamma$ (constructed as in Theorem \ref{theorem:christ}) which intersect it and with approximately the same radius. Using this, and Lemma \ref{lemma:monotonicity}, we have 
\begin{align*}
    \sum_{Q \in \dS} \overline{\beta}_{\Gamma}^{d, p}(2B')^2 \ell(Q)^d \approx \sum_{\substack{ Q \in \mathcal{D}_\Gamma\\ \ell(Q) \leq 1}} \overline{\beta}_{\Gamma}^{d, p}(Q) \ell(Q)^d.
\end{align*}
Since $\Gamma$ is a Lipschtiz graph (thus a bi-Lipschitz image of a $d$-plane), it follows from Dorronsoro's theorem (\cite{dorronsoro1985characterization}) that the David-Semmes (see \cite{david-semmes91}) $\beta$-numbers (which we denote by $\widetilde{\beta}_{\Gamma}^{d, p}$) define a `Carleson measure'. This in particular implies that
\begin{align}\label{eq:smallLip}
    \sum_{\substack{ Q \in \mathcal{D}_\Gamma \\ Q \subset B(0,1)}} \overline{\beta}_\Gamma^{d,p}(Q)^2 \ell(Q)^d \approx \sum_{\substack{ Q \in \mathcal{D}_\Gamma \\ Q \subset B(0,1)}} \widetilde{\beta}_\Gamma^{d,p}(Q)^2 \ell(Q)^d \lesssim 1. 
\end{align}
\begin{lemma}\label{l:error-1}
We have
\begin{align}
    \sum_{Q \in \dS} E_1^{d,p}(2B, E_0, E)^2 \ell(Q)^d < + \infty.
\end{align}
\end{lemma}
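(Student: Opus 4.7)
The plan is to expand the Choquet integral defining $E_1^{d,p}$ via the Whitney decomposition $\Ww_n = \{T_S\}_{S \in \Ww_d}$ of $F_0 = E_0 \setminus K_0$, estimate each Whitney contribution using $\dist(\cdot, E) \lesssim \diam(S)$, and then interchange the order of summation so that the whole sum telescopes into the area sum $\sum_{S \in \Ww_d} \diam(S)^d \lesssim \Leb^d(B_d(0,1))$. Since $E \cap B(0,1) \subset E_0$, the integrand $\dist(x,E)$ vanishes on $K_0$, so the effective region of integration is $F_0 \cap 2B_Q \subset \bigcup_{S \, : \, T_S \cap 2B_Q \neq \emptyset} T_S$.

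Using countable subadditivity of the Choquet integral together with $\dist(x,E) \leq \dist(x,K_0) \lesssim \diam(S)$ for $x \in T_S$ (Lemma \ref{l:dist-1}) and $\hdc(T_S) \leq \diam(T_S)^d \lesssim \diam(S)^d$ (Lemma \ref{l:diamTS}), one obtains
\[
\ell(Q)^d \, E_1^{d,p}(2B_Q, E_0, E)^p \lesssim \sum_{S \, : \, T_S \cap 2B_Q \neq \emptyset} \left(\frac{\diam(S)}{\ell(Q)}\right)^{p} \diam(S)^d.
\]

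Treat first the case $p \geq 2$. Since $2/p \leq 1$, concavity of $t \mapsto t^{2/p}$ gives $\bigl(\sum a_S\bigr)^{2/p} \leq \sum a_S^{2/p}$, so after multiplying by $\ell(Q)^d$ and summing over $Q \in \dS$ the estimate becomes
\[
\sum_{Q \in \dS} E_1^{d,p}(2B_Q)^2 \ell(Q)^d \;\lesssim\; \sum_{S \in \Ww_d} \diam(S)^{2+2d/p} \!\!\!\sum_{\substack{Q \in \dS \\ T_S \cap 2B_Q \neq \emptyset}} \!\!\! \ell(Q)^{d-2-2d/p}.
\]
For the inner sum, note that any such $Q$ satisfies $\diam(S) \lesssim \ell(Q)$: indeed $Q \cap K \neq \emptyset$ and $T_S \cap 2B_Q \neq \emptyset$ force $\dist(T_S, K) \lesssim \ell(Q)$, and Lemma \ref{l:dist-1} then gives $\diam(S) \lesssim \ell(Q)$. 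At each scale the $c_0\ell(Q)$-separation of Christ--David centres combined with $\diam(T_S) \lesssim \diam(S) \lesssim \ell(Q)$ yields $O(1)$ contributing cubes. The hypothesis on $p$ (namely $p < p(d)$) is exactly $d - 2 - 2d/p < 0$, so the inner sum is geometric in scale and is dominated by its largest term $\lesssim \diam(S)^{d-2-2d/p}$. Plugging back,
\[
\sum_{Q \in \dS} E_1^{d,p}(2B_Q)^2 \ell(Q)^d \;\lesssim\; \sum_{S \in \Ww_d} \diam(S)^d \;\lesssim\; \Leb^d(B_d(0,1) \setminus G_0) \;<\; \infty.
\]

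For $1 \leq p < 2$, reduce to $p = 2$: since $E_0$ is lower content $d$-regular (Lemma \ref{l:Eplus}), $\hdc(2B_Q \cap E_0) \approx \ell(Q)^d$, and applying the Choquet Jensen inequality (Lemma \ref{l:jensen}) to $f = \dist(\cdot, E)/\ell(Q)$ with exponent $2/p > 1$ gives $E_1^{d,p}(2B_Q) \lesssim E_1^{d,2}(2B_Q)$. The case $p = 2$ just handled closes the argument. The main point requiring care is the bounded overlap in the inner sum of Step 2, where one must verify that for fixed $S$ only $O(1)$ Christ--David cubes at each dyadic scale with $\ell(Q) \gtrsim \diam(S)$ can have $T_S \cap 2B_Q \neq \emptyset$; this is handled by the standard bounded-overlap properties of $\{B_Q\}$ together with Lemma \ref{l:diamTS}.
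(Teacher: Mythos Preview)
Your proof is correct and follows essentially the same route as the paper: expand the Choquet integral over the Whitney pieces $T_S$, use $\dist(\cdot,E)\lesssim\diam(S)$ and $\hdc(T_S)\lesssim\diam(S)^d$, apply concavity of $t\mapsto t^{2/p}$ for $p\ge 2$, swap the sums, and exploit $d-2-2d/p<0$ together with $\diam(S)\lesssim\ell(Q)$ to make the inner sum geometric; the case $p<2$ is reduced to $p=2$ via the Choquet Jensen inequality. If anything, your treatment of the inner sum is cleaner---you explicitly argue $O(1)$ cubes per scale, whereas the paper asserts (somewhat imprecisely) that the total number of such $Q$ is bounded before carrying out the same geometric estimate.
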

\begin{proof}
By Jensen's inequality (Lemma \ref{l:jensen}), we assume that $p \geq 2$, since $E_1^{d,p}(2B, E_0, E) \lesssim E_1^{d,2}(2B, E_0, E)$ for any $1 \leq p \leq 2$. Then, for $Q \in \dS$, and keeping the notation as above, we compute 
\begin{align*}
    \int_{2B \cap E_0} \left( \frac{\dist(y, E)}{\ell(Q)}\right)^p \, \hdc(y) \leq \int_{2B \cap (E_0 \setminus E)} \left( \frac{\dist(y, E)}{\ell(Q)}\right)^p \, \hdc(y).
\end{align*}
If $y \in (E_0 \setminus E)\cap 2B$, then $y \in F$, and thus $y \in T_S$ for some $S \in \Ww_n$. Thus, by Lemma \ref{l:dist-1}, we have 
\begin{align*}
    \dist(y, E) \leq \dist(y, K) \lesssim \diam(T_S).
\end{align*}
We then compute
\begin{align*}
    \int_{2B \cap (E_0 \setminus E)} \left( \frac{\dist(y, E)}{\ell(Q)} \right)^p \, d \hdc(y) & \leq \sum_{\substack{T_S \in \Ww_n \\ T_S \cap 2B \neq \emptyset}} \int_{T_S} \left( \frac{\dist(y, E)}{\ell(Q)}\right)^p \, d \hdc(y)  \\
    & \lesssim \sum_{\substack{T_S \in \Ww_n \\ T_S \cap 2B \neq \emptyset}} \frac{\hdc(T_S) \diam(S)^p}{\ell(Q)^p}.
\end{align*}
Then we get that 
\begin{align}
    & \sum_{Q \in \mathcal{S}} 
    \ps{ 
    \sum_{\substack{T_S \in \Ww_{n}\\ T_S \cap 2B_Q \neq \emptyset}} \frac{\hdc(T_S) \diam(S)^p}{\ell(Q)^p} \ell(Q)^{d\ps{\frac{p-2}{2}}}
    }^{\frac{2}{p}} \nonumber\\
     & \lesssim \sum_{Q \in \mathcal{S}} 
     \sum_{\substack{T_S \in \Ww_n\\ T_S \cap 2B_Q \neq \emptyset}} \left( \frac{\diam(S)^{p+d}}{\ell(Q)^{p+d}}\right)^{\frac{2}{p}} \, \ell(Q)^d 
    \lesssim \sum_{T_S \in \Ww_n}  \sum_{\substack{Q \in \mathcal{S} \\  2B_Q \cap T_S \neq \emptyset}} \frac{\diam(S)^{\frac{2d}{p} + 2}}{\ell(Q)^{\frac{2d}{p} - d +2}} \label{eq:whitney2}
\end{align}
Notice that, 
\begin{align} \label{e:summing1}
    \mbox{if } d=1 \mbox{ or } d=2\, \mbox{ then }\enskip  2-\frac{2}{p}d\ps{\frac{p-2}{2}} > 0 \enskip \forall p \geq 2.
\end{align}
On the other hand,
\begin{align}\label{e:summing2}
    \mbox{if } d \geq 3 \mbox{ then } \enskip 2-\frac{2}{p}d\ps{\frac{p-2}{2}} > 0  \mbox{ for } 2 \leq p < \frac{2d}{d-2}.
\end{align}
Given the hypotheses in Proposition \ref{theorem:tangentbeta}, in either case we may set
\begin{align}\label{e:summing3}
    2-\frac{2}{p}d\ps{\frac{p-2}{2}} =: \alpha > 0.
\end{align}

Now let $z \in T_S \cap B_Q$. Then we see, using \eqref{eq:whitney_diams2} and recalling that, by definition of $\mathcal{S}$, $K \cap Q\neq \emptyset$,
\begin{align} \label{e:new1}
\diam(S) \approx \dist(T_S, K)  \lesssim |z - c_Q| 
\lesssim \ell(Q),
\end{align}
where $c_Q$ is the center of $B_Q$.
Thus, given Whitney cube $T_S \in \Ww_n$, the number of cubes $Q \in \dS$ so that  $T_S \cap 2B_Q \neq \emptyset$ is bounded above by a universal constant $C$. Thus we have
\begin{align}
   \sum_{\substack{Q \in \mathcal{S} \\ 2B_Q \cap T_S \neq \emptyset}} \frac{\diam(S)^{\frac{2d}{p} + 2}}{\ell(Q)^{\frac{2d}{p} - d +2}} & = \diam(S)^{\frac{2d}{p} + 2} \sum_{\substack{Q \in \mathcal{S} \\ 2B_Q \cap T_S \neq \emptyset}} \frac{1}{\ell(Q)^{\frac{2d}{p} - d +2}}\nonumber \\
& \lesssim_C   \frac{\diam(S)^{\frac{2d}{p} + 2}}{\diam(S)^{\frac{2d}{p} - d +2}}
   \approx \diam(S)^d.\label{e:summing4}
\end{align}
Hence, we obtain
\begin{align}\label{e:summing5}
    \eqref{eq:whitney2} & \lesssim \sum_{T_S \in \Ww_n} \hdc(T_S)
    \lesssim \sum_{S \in \Ww_d} \hdc(S) 
     \lesssim 1.
\end{align}
This completes the proof of the lemma.
\end{proof}

\begin{lemma} \label{lemma:errorBbound}
We have
\begin{align*}
    & \sum_{Q \in \dS} E_2^{d,p}(4B, E_0, \Gamma)^2 \ell(Q)^d \\
    & \quad \quad \quad = 
     \sum_{Q \in \dS} \ps{\frac{1}{\ell(Q)^d}\int_{E_0 \cap 4B_Q} \ps{\frac{\dist(y, \Gamma)}{\ell(Q)}}^p \, d \hdc(y) }^{\frac{2}{p}} \ell(Q)^d < + \infty.
\end{align*}
\end{lemma}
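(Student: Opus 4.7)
The plan is to mimic the proof of Lemma \ref{l:error-1} almost verbatim, replacing the role of $E$ (and the estimate $\dist(y,K)\lesssim \diam(S)$ coming from Lemma \ref{l:dist-1}) by $\Gamma$ (and the estimate $\dist(y,\Gamma)\lesssim \diam(S)$ coming from Lemma \ref{lemma:distF}). Since the integrand vanishes on $K_0 \subset \Gamma$, the integral over $E_0 \cap 4B_Q$ reduces to an integral over the Whitney pieces $T_S \subset F_0 = E_0 \setminus K_0$ that meet $4B_Q$.

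First, by the Choquet Jensen inequality (Lemma \ref{l:jensen}), it suffices to handle $p \geq 2$. For $Q \in \mathcal{S}$ and $y \in T_S$ with $T_S \cap 4B_Q \neq \emptyset$, Lemma \ref{lemma:distF} gives $\dist(y,\Gamma) \lesssim \diam(S)$, while the Whitney estimate \eqref{eq:whitney_diams} yields $\mathcal{H}^d_\infty(T_S) \lesssim \diam(S)^d$. Decomposing the integration,
\begin{align*}
\int_{E_0 \cap 4B_Q} \left(\frac{\dist(y,\Gamma)}{\ell(Q)}\right)^p d\mathcal{H}^d_\infty(y)
\leq \sum_{\substack{T_S \in \mathcal{W}_n \\ T_S \cap 4B_Q \neq \emptyset}} \frac{\mathcal{H}^d_\infty(T_S)\,\diam(S)^p}{\ell(Q)^p}
\lesssim \sum_{\substack{T_S \in \mathcal{W}_n \\ T_S \cap 4B_Q \neq \emptyset}} \frac{\diam(S)^{d+p}}{\ell(Q)^p}.
\end{align*}

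Raising to the power $2/p$, multiplying by $\ell(Q)^d$ and summing over $Q \in \mathcal{S}$, I would swap the order of summation exactly as was done in \eqref{eq:whitney2}, obtaining a bound of the form
\begin{align*}
\sum_{Q \in \mathcal{S}} E_2^{d,p}(4B_Q,E_0,\Gamma)^2 \ell(Q)^d
\lesssim \sum_{T_S \in \mathcal{W}_n} \sum_{\substack{Q \in \mathcal{S} \\ 4B_Q \cap T_S \neq \emptyset}} \frac{\diam(S)^{\frac{2d}{p}+2}}{\ell(Q)^{\frac{2d}{p}-d+2}}.
\end{align*}
For each fixed Whitney piece $T_S$, the estimate \eqref{e:new1} (which only uses the Whitney relation between $S$ and $K_0$) shows $\diam(S) \lesssim \ell(Q)$ for every $Q$ with $4B_Q \cap T_S \neq \emptyset$, and by the standard Whitney covering argument only a bounded number of Christ--David cubes of each generation can meet a single $T_S$. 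Invoking the range of $p$ fixed in \eqref{e:summing1}--\eqref{e:summing3}, the inner geometric sum collapses to $\lesssim \diam(S)^d$, just as in \eqref{e:summing4}.

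Finally, combining the above with \eqref{eq:whitney_diams} and the fact that the Whitney cubes $S \in \mathcal{W}_d$ of $B_d(0,1) \setminus G_0$ have $\sum_{S} \mathcal{H}^d_\infty(S) \lesssim 1$, I conclude
\begin{align*}
\sum_{Q \in \mathcal{S}} E_2^{d,p}(4B_Q,E_0,\Gamma)^2 \ell(Q)^d
\lesssim \sum_{T_S \in \mathcal{W}_n} \mathcal{H}^d_\infty(T_S)
\lesssim \sum_{S \in \mathcal{W}_d} \mathcal{H}^d_\infty(S) \lesssim 1,
\end{align*}
which is the desired finiteness. The only non-routine point is to verify that the constraint $\diam(S) \lesssim \ell(Q)$ still holds when $T_S$ meets $4B_Q$ (rather than $2B_Q$ as in Lemma \ref{l:error-1}); this is immediate because enlarging the ball by a bounded factor only changes the implicit constants in \eqref{e:new1}. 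Everything else is a copy of the argument for Lemma \ref{l:error-1}.
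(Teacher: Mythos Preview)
Your proposal is correct and follows essentially the same route as the paper's proof: reduce to $p\geq 2$, observe that the integrand is supported on $F_0=\bigcup T_S$, bound $\dist(y,\Gamma)\lesssim \diam(S)$ on each $T_S$, and then repeat the double-sum computation from Lemma~\ref{l:error-1} verbatim. Your citation of Lemma~\ref{lemma:distF} for the distance bound is in fact more direct than the paper's reference, and your remark that only boundedly many cubes \emph{per generation} meet a fixed $T_S$ (so the inner sum is a convergent geometric series) is the precise statement needed for \eqref{e:summing4}.
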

\begin{proof}
As before, we assume $p \geq 2$. Note that if $y \in E_0 \setminus \Gamma$, then $y \in F$. Thus, if $y \in E_0 \setminus \Gamma$, then $y \in T_S \in \Ww_n$. Using the fact that $\dist(y, \Gamma) \lesssim \diam(T_S) \approx \diam(S)$ (Lemma \ref{l:dist-1}) we compute 
\begin{align*}
\int_{\ps{E_0 \setminus \Gamma}\cap 2B} \ps{\frac{\dist(y, \Gamma)}{\ell(Q)}}^p \,  d\hdc(y) & \leq \sum_{\substack{T_S \in \Ww_{n} \\ T_S \cap 2B \neq \emptyset}} \int_{T_S)} \ps{\frac{\dist(y, \Gamma)}{\ell(Q)}}^p \, d \hdc (y)\\
& \lesssim \sum_{\substack{T_S \in \Ww_{n}\\ T_S \cap 2B \neq \emptyset}} \frac{\hdc(T_S) \diam(S)^p}{\ell(Q)^p}
\end{align*}
The remainder of the proof is precisely as in Lemma \ref{l:error-1}. 
\end{proof}

Now \eqref{eq:cubesest3} together with Lemma \ref{lemma:largecubes}, Lemma \ref{l:error-1}, Lemma \ref{lemma:errorBbound} and \eqref{eq:smallLip} give
\begin{align*}
\int_{K} \sum_{\substack{Q \in \cubes, \, \ell(Q) \leq 1 \\ x \in Q}} \betae{2}(Q)^2 \, d \hdc(y) < \infty
\end{align*}
and therefore
\begin{align*}
    \sum_{\substack{Q \in \cubes, \, \ell(Q) \leq 1 \\ x \in Q}} \beta_E^{d,2}(Q)^2 < \infty \enskip \enskip \mbox{for} \enskip \enskip \hd \mbox{-almost all} \enskip x \in K. 
\end{align*}
This concludes the proof of Lemma \ref{lemma:betaontan} and therefore of Proposition \ref{theorem:tangentbeta}.

\begin{corollary} \label{corol:final}
Let $E \subset B(0,1) \subset \R^n$. If $d=1$ or $d=2$, let $1 \leq p < \infty$. If $d \geq 3$, let $1 \leq p < \frac{2d}{d-2}$. Then, except for a set of zero $\hd$ measure, if $x \in E$ is a tangent point of $E$ then
\begin{align*}
\int_0^1 \beta_E^{d, p} (x,t)^2 \, \frac{dt}{t} < \infty.
\end{align*}
\end{corollary}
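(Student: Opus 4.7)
The plan is to upgrade the discrete summability of Proposition~\ref{theorem:tangentbeta} to the integrability of Corollary~\ref{corol:final} via a dyadic comparison with Christ--David cubes, dual to the integral-to-sum conversion carried out in Corollary~\ref{corol:final2}.

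Fix a tangent point $x \in E$ at which Proposition~\ref{theorem:tangentbeta} holds. For each integer $k \geq 1$ (so that $5\rho^k < 1$), let $Q_x^k \in \cubes$ denote the unique cube of generation $k$, of side $\ell(Q_x^k) = 5\rho^k$, containing $x$. Since $x \in Q_x^k \subseteq B(x_{Q_x^k}, \ell(Q_x^k))$, one has $|x - x_{Q_x^k}| \leq \ell(Q_x^k)$. Hence, for $t \in (5\rho^{k+1}, 5\rho^k]$,
\[
B(x,t) \subseteq B(x_{Q_x^k}, t + \ell(Q_x^k)) \subseteq 2B_{Q_x^k},
\]
and Lemma~\ref{lemma:monotonicity} applied to $B(x,t) \subseteq 2B_{Q_x^k}$ (both centred on $E$), combined with the lower bound $t > \rho\,\ell(Q_x^k)$, gives
\[
\beta_E^{d,p}(x,t) \leq \Big(\tfrac{2\ell(Q_x^k)}{t}\Big)^{1+d/p}\beta_E^{d,p}(2B_{Q_x^k}) \lesssim_{\rho,d,p} \beta_E^{d,p}(2B_{Q_x^k}).
\]

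Integrating scale by scale, and using $\beta_E^{d,p} \leq 1$ to bound the contribution from $t$ near $1$,
\[
\int_0^1 \beta_E^{d,p}(x,t)^2\,\tfrac{dt}{t} \lesssim 1 + \sum_{k \geq 1}\int_{5\rho^{k+1}}^{5\rho^k}\beta_E^{d,p}(x,t)^2\,\tfrac{dt}{t} \lesssim 1 + \log(1/\rho)\sum_{k \geq 1}\beta_E^{d,p}(2B_{Q_x^k})^2.
\]
Thus it suffices to prove the inflated form
\[
\sum_{\substack{Q \in \cubes,\, Q \subset B(0,1) \\ Q \ni x}}\beta_E^{d,p}(2B_Q)^2 < \infty
\]
for $\hd$-a.e.\ tangent point $x$. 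This follows from the \emph{same} argument as Proposition~\ref{theorem:tangentbeta}, carried out on $2B_Q$ throughout: Lemmas~\ref{l:error2} and \ref{lemma:azzamschul} continue to hold (only the absolute constants change), the Whitney-decomposition error estimates of Lemmas~\ref{l:error-1} and \ref{lemma:errorBbound} depend only on the comparability of the reference ball radius to $\ell(Q)$, and the final Dorronsoro bound~\eqref{eq:smallLip} on the covering Lipschitz graph is invariant under uniform dilation of the underlying balls.

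The principal obstacle is exactly the passage from $\beta_E^{d,p}(B_Q)$ to $\beta_E^{d,p}(2B_Q)$, since Lemma~\ref{lemma:monotonicity} only controls $\beta$ on a smaller ball by $\beta$ on a larger, and so does not supply the required inequality in a single step. This is overcome not by any new estimate but by the observation above that the proof of Proposition~\ref{theorem:tangentbeta} is manifestly insensitive to a fixed dilation of every reference ball, so the inflated sum is handled by the same machinery.
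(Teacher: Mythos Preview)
Your proposal is correct and follows essentially the same route as the paper: convert the integral into a dyadic sum by containing $B(x,t)$ in a Christ--David ball of comparable radius and apply Lemma~\ref{lemma:monotonicity}, then invoke Proposition~\ref{theorem:tangentbeta}. The paper's own argument is more compressed, writing simply $\beta_E^{d,p}(x,t) \lesssim \beta_E^{d,p}(Q)$ without tracking the dilation, whereas you are explicit that one actually lands on $2B_Q$ and that the proof of Proposition~\ref{theorem:tangentbeta} is insensitive to a fixed enlargement of the reference balls --- a point the paper absorbs into its flexible notation $\beta_E^{d,p}(Q)$ (see its subsequent use of $B^Q \subset 3B_Q$ and larger multiples throughout Section~\ref{s:tan-beta}).
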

\begin{proof}
Let $x \in E$ be a tangent point of $E$.
Recall Lemma \ref{lemma:monotonicity}. Then, for $1\leq p < \infty$,
\begin{align}
    \int_0^1 \beta_E^{d,p} (x,t)^2 \, \frac{dt}{t} & \lesssim \int_0^1 \beta_E^{d,p}(Q)^2 \, \frac{dt}{t} \nonumber\\
    & \sim \sum_{Q \ni x} \beta_E^{d,p}(Q)^2 \, \int_0^1 \chara_{\{t \in [0,\infty)| C^{-1}\ell(Q) \leq t \leq C \ell(Q)\}} \, \frac{dt}{t} \nonumber\\
    & = \sum_{Q \ni x} \beta_E^{d,p}(Q)^2 \, \int_{C^{-1}\ell(Q)}^{C \ell(Q)} \, dt/t \nonumber\\
    & = \ln (C^2) \sum_{Q \ni x} \beta_E^{d,p}(Q)^2.  \label{eq:corollary}
\end{align}
This sum is bounded by Theorem \eqref{theorem:tangentbeta}.
\end{proof}

Now, any bounded set $E \subset \R^n$ can be dilated and translated so that it is contained in $B(0,1)$. Thus, Theorem \ref{t:main} follows immediately from Corollaries \ref{corol:final2} and \ref{corol:final}.

\subsection{The sum over small cubes: $C^{1, \alpha}$ case.}
Throughout this subsection, we let $\alpha \in [0,1)$ and \textit{we will use the $\overline{\beta}$ coefficients defined in \eqref{e:beta-content}}. 

Recall the notation $K_\alpha$ in \eqref{e:Kappa} and that we want to prove Lemma \ref{lemma:betaontan}. We use Lemma \ref{lemma:mattila} to obtain a map $g: \R^d \to \R^n$ (this is a graph map), so that $g(\R^d)$ is $C^{1, \alpha}$. Set $G_\alpha:=\Pi(K_\alpha)$. 
Just like in the Lipschitz case, we find a Whitney decomposition of $B_d(0,1)\setminus G_\alpha$, which we denote by $\Ww_d$. We also define 
\begin{align*}
    & E_\alpha := \bigcap_{x \in K_\alpha} \overline{X_\alpha(x)}, \mbox{ and }\\
    & F_\alpha:= E_\alpha \setminus K_\alpha. 
\end{align*}
For each $S \in \Ww_d$, we define $T_S:= \Pi^{-1}(S) \cap E_0$, and then set $\Ww_n := \{T_S \, |\, S \in \Ww_d\}$. 

\begin{remark}\label{r:also-alpha}
It is immediate to see that \eqref{eq:Tswhitney0} and \eqref{eq:Tswhitney1} hold in this case, too. Lemmas \ref{l:Gamma-in-Eplus} and \ref{l:punto-bordo} and \ref{l:Eplus} are still valid, with the same proofs. 
\end{remark}
\begin{lemma}
For $T_S \in \Ww_n$, we have
\begin{align}
    & \diam(T_S) \approx_{\Lip(f)} \diam(S) \mbox{ and } \label{e:diamTS-diamS-alpha}\\
    & \dist(T_S, K_\alpha) \approx \diam(S).\label{e:distTK-alpha}
\end{align}
\end{lemma}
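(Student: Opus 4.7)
The plan is to follow the proofs of Lemmas \ref{l:diamTS} and \ref{l:dist-1} essentially verbatim, with the only change being that the cone containment $y \in \overline{X_0(x)}$ gets replaced by the paraboloid containment $y \in \overline{X_\alpha(x)}$. The key observation is that at bounded scales the paraboloid condition degenerates into a cone-type condition with a slightly worse constant: since everything lies in $B(0,1)$ (so $|\Pi_{\R^d}(y-x)| \leq 2$) and $\alpha \geq 0$, one has
\[
|\Pi_{(\R^d)^\perp}(y-x)| \,\leq\, \sin(\theta)\,|\Pi_{\R^d}(y-x)|^{1+\alpha} \,\lesssim_\alpha\, \sin(\theta)\,|\Pi_{\R^d}(y-x)|
\]
for every $y \in T_S$ and every $x \in K_\alpha$. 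This is what drives every geometric estimate in the Lipschitz argument.

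For \eqref{e:diamTS-diamS-alpha}, given $y, z \in T_S$ I would pick $p,q \in G_\alpha$ with $|p - \Pi(y)| \leq 2\dist(\Pi(y), G_\alpha)$ and $|q - \Pi(z)| \leq 2\dist(\Pi(z), G_\alpha)$, set $x(y) = g(p)$ and $x(z) = g(q)$ in $K_\alpha$, let $\tilde y, \tilde z$ be the orthogonal projections of $y,z$ onto the horizontal $d$-planes through $x(y), x(z)$, and split
\[
|y-z| \leq |y-\tilde y| + |\tilde y - \tilde z| + |\tilde z - z|.
\]
The paraboloid condition gives
\[
|y-\tilde y| = |\Pi_{(\R^d)^\perp}(y - x(y))| \leq \sin(\theta)|\Pi(y)-p|^{1+\alpha} \lesssim \diam(S)^{1+\alpha} \lesssim \diam(S),
\]
where the last step uses $\diam(S) \leq 2$; an identical bound holds for $|\tilde z - z|$. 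The middle term $|\tilde y - \tilde z|$ is estimated exactly as in \eqref{eq:lemmawhit2}--\eqref{eq:lemmawhit3}, invoking that $f \in C^{1,\alpha}$ is in particular Lipschitz on the bounded set $B_d(0,1)$ and that $\Pi(\tilde y) = \Pi(y), \Pi(\tilde z) = \Pi(z) \in S$. The reverse inequality uses that $\Pi$ is $1$-Lipschitz together with the fact that $\Pi(T_S) \subseteq S$ and the Whitney structure of $\Ww_d$ (this is entirely parallel to Lemma \ref{l:dist-1}, and only the $\lesssim$ direction will be needed later).

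For \eqref{e:distTK-alpha}, the lower bound is inherited from the Lipschitz case: since $\Pi$ is $1$-Lipschitz and $S \in \Ww_d$ is a Whitney cube for $B_d(0,1) \setminus G_\alpha$,
\[
\dist(T_S, K_\alpha) \geq \dist(S, G_\alpha) \approx \diam(S).
\]
For the upper bound, with $y \in T_S$ and $x(y) \in K_\alpha$ as above,
\[
|y - x(y)|^2 = |\Pi(y) - p|^2 + |\Pi_{(\R^d)^\perp}(y - x(y))|^2 \leq |\Pi(y)-p|^2\bigl(1 + \sin^2(\theta)|\Pi(y)-p|^{2\alpha}\bigr) \lesssim \diam(S)^2,
\]
again using boundedness. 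The main (minor) obstacle is purely bookkeeping: one must check that the extra factor $|\Pi(y)-p|^\alpha$ coming from the paraboloid is harmless, which it is because $|\Pi(y)-p|$ is uniformly bounded; there is no genuinely new geometric content beyond what already appeared in Lemmas \ref{l:diamTS} and \ref{l:dist-1}.
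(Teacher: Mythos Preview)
Your proposal is correct and follows essentially the same approach as the paper. The paper's own proof is a one-liner: it simply observes that a point admitting an $\alpha$-paraboloid is in particular a cone point (and $f$, being $C^{1,\alpha}$, is in particular Lipschitz), so Lemmas \ref{l:diamTS} and \ref{l:dist-1} apply verbatim; you unpack exactly this observation by noting that at bounded scales $|\Pi_{(\R^d)^\perp}(y-x)| \leq \sin(\theta)|\Pi(y-x)|^{1+\alpha} \lesssim \sin(\theta)|\Pi(y-x)|$ and then rerunning the cone-case estimates.
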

\begin{proof}
Both equation \eqref{e:diamTS-diamS-alpha} and \eqref{e:distTK-alpha} follow from Lemma \ref{l:diamTS}, since a point admitting an $\alpha$-paraboloid is in particular a cone point, and $f$, being $C^{1,\alpha}$, is in particular Lipschitz. 
\end{proof}

The improvement that we get with respect to the Lipschitz case, is in the following lemma.
\begin{lemma}
Let $T_S \in \Ww_n$ \textup{(}so $S \in \Ww_d$\textup{)}. Then if $y \in T_S$,
\begin{align}\label{e:dist-alpha}
    \dist(y, \Gamma) \lesssim \diam(S)^{1+\alpha}.
\end{align}
\end{lemma}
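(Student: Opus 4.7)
The plan is to take $g(\Pi(y)) \in \Gamma$ as the candidate point nearest to $y$, exploiting that $y$ and $g(\Pi(y))$ share the same horizontal projection so only the $(\R^d)^\perp$-components contribute to the distance. Following the proof of Lemma \ref{l:diamTS}, I pick $p = p(y) \in G_\alpha$ with $|p - \Pi(y)| \leq 2\dist(\Pi(y), G_\alpha) \lesssim \diam(S)$ (since $S \in \Ww_d$ is a Whitney cube) and set $x = g(p) \in K_\alpha$. Writing $g(q) = (q, f(q))$ in the splitting $\R^n = \R^d \oplus (\R^d)^\perp$ and noting $f(p) = \Pi_{(\R^d)^\perp}(x)$, the triangle inequality then gives
\begin{align*}
|y - g(\Pi(y))| = \bigl|\Pi_{(\R^d)^\perp}(y) - f(\Pi(y))\bigr| \leq \bigl|\Pi_{(\R^d)^\perp}(y - x)\bigr| + \bigl|f(p) - f(\Pi(y))\bigr|,
\end{align*}
so the task reduces to bounding each summand by $\diam(S)^{1+\alpha}$.

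The first summand is immediately controlled by the paraboloid at $x$: since $y \in E_\alpha \subseteq \overline{X_\alpha(x)}$, one has $|\Pi_{(\R^d)^\perp}(y - x)| \leq \sin(\theta)|\Pi(y - x)|^{1+\alpha} \lesssim \diam(S)^{1+\alpha}$, which is the precise improvement over Lemma \ref{lemma:distF}. For the second summand, the key property is that the $C^{1,\alpha}$ extension $f$ satisfies $Df(p) = 0$ for every $p \in G_\alpha$; granting this, the $C^{1,\alpha}$ Taylor expansion at $p$ yields $|f(\Pi(y)) - f(p)| \lesssim |\Pi(y) - p|^{1+\alpha} \lesssim \diam(S)^{1+\alpha}$, which closes the estimate.

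The main obstacle is therefore establishing $Df(p) = 0$ on $G_\alpha$, and this is where the argument genuinely uses the $\alpha$-paraboloid (rather than just cone) property. For $p, q \in G_\alpha$ near one another, both $x = g(p)$ and $z = g(q)$ lie in $K_\alpha$; the paraboloid at $x$ has axis $\R^d$, so the inclusion $K_\alpha \subseteq \overline{X_\alpha(x)}$ gives $|f(q) - f(p)| = |\Pi_{(\R^d)^\perp}(z - x)| \leq \sin(\theta)|q - p|^{1+\alpha}$. To produce an extension of $f$ to all of $\R^d$ with $Df \equiv 0$ on $G_\alpha$, I would apply the Whitney $C^{1,\alpha}$ extension theorem on $G_\alpha$ with jet data $(f, 0)$ — the required Whitney compatibility estimate is exactly the inequality just displayed. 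This replaces the Kirszbraun extension used in the Lipschitz section and, by construction, yields an extension whose derivative vanishes on $G_\alpha$.
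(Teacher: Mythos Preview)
Your argument is correct and uses the same decomposition as the paper: compare $y$ to $g(\Pi(y))$ on $\Gamma$, route the vertical difference through a nearby point of $K_\alpha$, and bound each piece by $\diam(S)^{1+\alpha}$ via the paraboloid condition. The only organisational difference lies in how the ``graph term'' $|f(\Pi(y)) - f(p)|$ is controlled. You invoke $Df(p)=0$ on $G_\alpha$ explicitly, securing it by re-constructing the extension via the Whitney $C^{1,\alpha}$ theorem with zero first-order jet. The paper instead first proves the inclusion $g(S)\subset T_S\subset E_\alpha$ (so $g(\Pi(y))\in E_\alpha$) and then bounds \emph{both} vertical pieces by the same mechanism: since $g(\Pi(y))$ and $y$ each lie in $\overline{X_\alpha(z)}$ for $z=g(q)\in K_\alpha$ with $|q-\Pi(y)|\approx\diam(S)$, one gets $|\Pi_{(\R^d)^\perp}(g(\Pi(y))-z)|+|\Pi_{(\R^d)^\perp}(y-z)|\lesssim |q-\Pi(y)|^{1+\alpha}\approx\diam(S)^{1+\alpha}$. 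The two routes are equivalent --- the inclusion $\Gamma\subset E_\alpha$ is exactly the statement $|f(p')-f(q)|\le\sin(\theta)|p'-q|^{1+\alpha}$ for $q\in G_\alpha$, which forces $Df(q)=0$ --- but the paper's packaging is a bit more symmetric (one mechanism for both terms) and avoids re-opening the construction of the extension, while yours makes transparent precisely which property of $f$ is doing the work.
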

\begin{proof}
If $y \in T_S$, then $y \in F_\alpha$.
Recall that $F_\alpha = \cup_{S \in \Ww_d} T_S$. We claim that 
\begin{align}\label{e:g-F}
    g \left( \bigcup_{S \in \Ww_d} S \right) \subset F_\alpha. 
\end{align}
Indeed, suppose that for some $p \in S$, we have $g(p) \in B(0,1) \setminus F_\alpha$. Since $p \notin K_\alpha$, then $g(p) \in B(0,1) \setminus E_\alpha$. Hence, $f(p) \in X_\alpha^c(x)$ for some $x \in K_\alpha$. This contradicts the fact that $g$ is the graph map of a $C^{1,\alpha}$ extension of $f$. Thus \eqref{e:g-F} holds. 
We also claim that 
\begin{align*}
    g(S) \subset T_S
\end{align*}
for each $S \in \Ww_d$. We know that $g(S) \subset F_\alpha$. But by definition, if $g(S)\cap T_R$, for some $S\neq R \in \Ww_d$, then $g$ maps a point $p \in S\subset \R^d$ to a point $x \in \R^n$ with $\Pi(x) \neq p$. This is impossible since $g$ is the graph map of a function whose range is the orthogonal complement of $\R^d$. It is then clear that $\Pi|_\Gamma(T_S) = S$. 

Now let $y \in T_S$. Then $\Pi(y) \in S$. By the above, there exists a point $x \in \Gamma$ so that $\Pi(x)=\Pi(y)$. Thus, $|x-y| = |Q(x)-Q(y)|$, where $Q$ is the orthogonal projection onto $(\R^d)^\perp$. Since $p=\Pi(x)=\Pi(y) \in S$, then there is a point $q \in G_\alpha$ with $|q-p| \approx \diam(S)$. Moreover, we have $x,y \in X_\alpha(z)$, where $z=g(q)$. Thus,
\begin{align*}
    |Q(x) - Q(y)| &\leq |Q(x) - Q(z)| + |Q(z) - Q(y)| \\\lesssim_\theta  &|\Pi(x-z)|^{1+\alpha} + |\Pi(y-z)|^{1+\alpha}
     \approx_\theta |q-p|^{1+\alpha} \approx_\theta \diam(S)^{1+\alpha}.
\end{align*}


\end{proof}

Let $Q \in \dS$. Let $B^Q$ be a ball centered on $K_\alpha$ so that $r(B^Q) \approx \ell(Q)$ and $B_Q \subset B^Q \subset 3B_Q$. Then $\beta_E^{d,p}(Q) \approx \beta_E^{d,p}(B^Q)$.
Note that 
\begin{align*}
    E \subset E_\alpha,
\end{align*}
since if $x \in E\setminus E_\alpha$, then there exists a point $y \in K_\alpha$ so that $x \in X_\alpha^c(y)$, which contradicts $y \in K_\alpha$. Let $L_{2B^Q}$ be the plane infimising $\overline{\beta}_{\Gamma}^{d,p}(2B^Q)$. By containment, we then have
\begin{align*}
    \overline{\beta}_{E}^{d,p} (B^Q) \leq \overline{\beta}_{E}^{d,p} (B^Q, L_{2B^Q}) \leq  \overline{\beta}_{E_\alpha}^{d, p}(B^Q, L_{2B^Q}).
\end{align*}
Moreover, by Lemma \ref{l:Eplus} (and Remark \ref{r:also-alpha}), $E_\alpha$ is $d$-LCR. Further, since $x_B \in K_\alpha \subset E_\alpha$, then $x_B \in \Gamma$. Moreover, $\Gamma$ is $d$-LCR. Thus, we can apply Lemma
\ref{lemma:azzamschul} with $E_1=E_\alpha$ and $E_2= \Gamma$. We have
\begin{align*}
    \overline{\beta}_{E}^{d,p}(Q) \approx \overline{\beta}_{E}^{d,p}(B) \leq  \overline{\beta}_{E_\alpha}^{d, p} (B, L_{2B}) & \lesssim \overline{\beta}_{\Gamma}^{d,p}(2B) + \left( \frac{1}{\ell(Q)^d}\int_{E_\alpha \cap 4B} \left( \frac{\dist(y, \Gamma)}{\ell(Q)} \right)^p \, d \hdc(y) \right)^{\frac{1}{p}}\\
    & =: \beta_{\Gamma}^{d,p}(2B) + E(Q, E_\alpha, \Gamma).
\end{align*}
The sum $\sum_{Q \in \dS} \frac{\beta_{\Gamma}^{d,p}(2B^Q)^2}{\ell(Q)^{2\alpha}} \ell(Q)^d$ can be shown to be finite as in \eqref{eq:smallLip}.
On the other hand, we have
\begin{align*}
     E(Q, E_\alpha, \Gamma) 
    & =\left( \ell(Q)^{-d} \sum_{\substack{S \in \Ww_d \\ T_S \cap 4B^Q \neq \emptyset}} \int_{T_S\cap E_\alpha} \left( \frac{\dist(y, \Gamma)}{\ell(Q)}\right)^p \hdc(y) \right)^{\frac{1}{p}} \\
    & \leq \left(\ell(Q)^{-d}\sum_{\substack{S \in \Ww_d \\ T_S \cap 4B^Q \neq \emptyset}} \int_{T_S} \left( \frac{\diam(S)^{1+\alpha}}{\ell(Q)} \right)^p d \hdc(y) \right)^{\frac{1}{p}} \\
    & \lesssim \sum_{\substack{S \in \Ww_d \\ T_S \cap 4 B^Q \neq \emptyset }} \frac{\hdc(T_S) \diam(S)^{p(1+\alpha)}}{\ell(Q)^{p+d}}.
\end{align*}
Then we have
\begin{align*}
    \sum_{Q \in \dS} E(Q, E_\alpha, \Gamma) \ell(Q)^{d-2\alpha} & \lesssim \sum_{Q \in \dS} \ell(Q)^{d-2\alpha} \left( \sum_{\substack{S \in \Ww_d \\ T_S \cap 4 B^Q \neq \emptyset }} \frac{\hdc(T_S) \diam(S)^{p(1+\alpha)}}{\ell(Q)^{p+d}}\right)^{\frac{2}{p}} \\
    & \lesssim \sum_{Q \in \dS} \sum_{\substack{S \in \Ww_d \\ T_S \cap 4 B^Q \neq \emptyset }} \left( \frac{\diam(S)^{d+p(1+\alpha)}}{\ell(Q)^{p+d}} \right)^{\frac{2}{p}} \ell(Q)^{d-2\alpha}\\
    & \lesssim \sum_{S \in \Ww_d} \sum_{\substack{Q \in \dS \\ 4B^Q\cap T_S \neq \emptyset}} \frac{ \diam(S)^{\frac{2d}{p}+2(1+\alpha)}}{\ell(Q)^{2(1+\alpha) - d + \frac{2d}{p}}}
\end{align*}
It is easy to see that this sum turns out to be a geometric series: as in \eqref{e:summing1}, \eqref{e:summing2} and \eqref{e:summing3}, the choice of the exponents $p$ guarantees that $2(1+\alpha) - d+\frac{2d}{p}>0$. Then one can argue as in \eqref{e:new1}, \eqref{e:summing4} and \eqref{e:summing5}. This estimates, together with Lemmas \ref{lemma:largecubes} and \eqref{lemma:pointcubes}, conclude the proof of Lemma \ref{lemma:betaontan-alpha} and thus of Proposition  \ref{p:tangent-beta-alpha}.
One can then obtain one direction of Theorem \ref{t:main2} as in Corollary \ref{corol:final}.

\bibliographystyle{halpha-abbrv}
\bibliography{bibliography}

\Addresses



\end{document}